\newtheorem{lemma}{Lemma}[section]
\newtheorem{thm}{Theorem}[section]
\newtheorem{corollary}{Corollary}[section]
\def\text#1{\mbox{\rm #1}}
\newcommand{\argmin}{\mathop{\rm argmin}}
\newcommand{\argmax}{\mathop{\rm argmax}}
\newcommand{\E}{\mathbb{E}}
\newcommand{\Prob}{\mathbb{P}}
\newcommand{\RN}[1]{%
  \textup{\uppercase\expandafter{\romannumeral#1}}%
}
\newcommand{\thetas}{\hat{\theta}^{(s)}}
\newcommand{\zs}{\hat{z}^{(s)}}
\newcommand{\zsnext}{\hat{z}^{(s+1)}}
\newcommand{\iprod}[2]{\left\langle #1, #2 \right\rangle}
\title{Statistical and Computational Guarantees of Lloyd's Algorithm and Its Variants}
\author[1]{Yu Lu}
\author[1]{Harrison H. Zhou}
\affil[1]{Yale University}
\begin{document}
\maketitle

\begin{abstract}
Clustering is a fundamental problem in statistics and machine learning. Lloyd's algorithm, proposed in 1957, is still possibly the most widely used clustering algorithm in practice due to its simplicity and empirical performance. However, there has been little theoretical investigation on the statistical and computational guarantees of Lloyd's algorithm. This paper is an attempt to bridge this gap between practice and theory. We investigate the performance of Lloyd's algorithm on clustering sub-Gaussian mixtures. Under an appropriate initialization for labels or centers, we show that Lloyd's algorithm converges to an exponentially small clustering error after an order of $\log n$ iterations, where $n$ is the sample size. The error rate is shown to be minimax optimal.  For the two-mixture case, we only require the initializer to be slightly better than random guess.  

In addition, we extend the Lloyd's algorithm and its analysis to community detection and crowdsourcing, two problems that have received a lot of attention recently in statistics and machine learning. Two variants of Lloyd's algorithm are proposed respectively for community detection and crowdsourcing. On the theoretical side, we provide statistical and computational guarantees of the two algorithms, and the results improve upon some previous signal-to-noise ratio conditions in literature for both problems. Experimental results on simulated and real data sets demonstrate competitive performance of our algorithms to the state-of-the-art methods.
\end{abstract}


\section{Introduction}
Lloyd's algorithm, proposed in 1957 by Stuart Lloyd at Bell Labs \cite{lloyd1982least}, is still one of the most popular clustering algorithms used by practitioners, with a wide range of applications from computer vision \cite{agarwal2004k}, to astronomy \cite{ordovas2014fast} and to biology \cite{herwig1999large}. Although considerable innovations have been made on developing new provable and efficient clustering algorithms 
in the past six decades, Lloyd's algorithm has been consistently listed as one of the top ten data mining algorithms in several recent surveys \cite{wu2008top}. 

Lloyd's algorithm is very simple and easy to implement. It starts with an initial estimate of centers or labels and then iteratively updates the labels and the centers until convergence. Despite its simplicity and a wide range of successful applications, surprisingly, there is little theoretical analysis on explaining the effectiveness of Lloyd's algorithm. It is well known that there are two issues with Lloyd's algorithm under the worst case analysis. First, as a greedy algorithm, Lloyd's algorithm is only guaranteed to converge to a local minimum \cite{milligan1980examination}.
The $k$-means objective function that Lloyd's algorithm attempts to minimize is NP-hard \cite{dasgupta2008hardness, mahajan2009planar}.   
Second, the convergence rate of Lloyd's algorithm can be very slow. Arthur and Vassilvitskii \cite{arthur2006slow} construct a worst-case showing that Lloyd's algorithm can require a superpolynomial running time. 

A main goal of this paper is trying to bridge this gap between theory and practice of Lloyd's algorithm. We analyze its performance on the Gaussian mixture model \cite{pearson1894contributions, titterington1985statistical}, a standard model for clustering, and consider the generalization to sub-Gaussian mixtures, which includes binary observations as a special case. Specifically, we attempt to address following questions to help understand Lloyd's algorithm: How good does the initializer need to be? How fast does the algorithm converge? What separation conditions do we need? What is the clustering error rate and how it is compared with the optimal statistical accuracy?


Despite the popularity of Lloyd's algorithm as a standard procedure for the k-means problem, to the best of our knowledge,
there is little work in statistics to understand the algorithm. Some efforts have been made by computer scientists to develop effective initialization techniques for Lloyd's algorithm \cite{arthur2007k, ostrovsky2006effectiveness, aggarwal2009adaptive}. Their main focus is to find polynomial-time approximation scheme of the k-means objective function rather than to identify the cluster labels of data points, which is often the primary interest for many applications. It is worthwhile to emphasize that when the signal-to-noise ratio is not sufficiently large, a small k-means objective function value does not necessarily guarantee a small clustering error. Furthermore, the error rate is different from the optimal error of an exponential form. Recently, some fascinating results by Kumar and Kannan \cite{kumar2010clustering} and  Awasthi and Sheffet  \cite{awasthi2012improved} show that under certain strong separation conditions, the Lloyd's algorithm initialized by spectral clustering correctly classifies all data points with high probability. However, they focus on the strong consistency results and the clustering error rate of the Lloyd's algorithm remains unclear.  It is desirable to have a systematic study of Lloyd's algorithm under various separation conditions such that the strong consistency can be included as a special case.

Lloyd's algorithm is an iterative procedure. Its analysis can be challenging due to dependence between iterative steps. In the statistics literature, various two-stage estimators, or more precisely, two-step estimators, have been proposed to successfully solve some very important non-convex problems, for example, sparse principle analysis \cite{cai2013sparse, wang2014nonconvex}, community detection \cite{gao2015achieving, gao2016community}, mixture of linear regression \cite{chaganty2013spectral, yi2014alternating} and crowdsourcing \cite{zhang2014spectral, gao2016exact}. For all those problems, under a strong assumption that the initial estimator is consistent, one-step update in the second stage usually leads us to a minimax optimal estimator. However, as observed in various simulation or real data studies in \cite{zhang2014spectral} and \cite{gao2015achieving}, the initial estimator may perform poorly, and more iterations in the second stage keeps driving down the clustering error.  Unfortunately, due to some technical difficulties, theoretical analyses in \cite{zhang2014spectral} and \cite{gao2015achieving} restrict to one-step iteration. 


\subsection{Our contributions} In this paper, we give a considerably weak initialization condition under which Lloyd's algorithm converges to the optimal label estimators of sub-Gaussian mixture model. While previous results \cite{kumar2010clustering, awasthi2012improved} focus on exact recovery (strong consistency) of the labels, we obtain the clustering error rates of Lloyd's algorithm under various signal-to-noise levels. As a special case, we obtain exact recovery with high probability when the signal-to-noise level is bigger than $4\log n$. The signal-to-noise ratio condition for exact recovery is weaker than the state-of-the-art result \cite{awasthi2012improved}. In contrast to previous two-stage (two-step) estimators, our analyses go beyond one-step update. We are able to show a linear convergence to the statistical optimal error rate for Lloyd's algorithms and its two variants for community detection and crowdsourcing. 

We illustrate our contributions here by considering  the problem of clustering two-component spherical Gaussian mixtures, with symmetric centers $\theta^*$ and $-\theta^* \in \mathbb{R}^d$ and variance $\sigma^2$. See Section \ref{sec:prelim} for more details. Let $n$ be the sample size and $r=\|\theta^*\|/(\sigma \sqrt{1+9d/n})$ be the normalized signal-to-noise ratio. We establish the following basin of attractions of Lloyd's algorithm. 

\begin{thm} \label{thm:inf1} Assume $r \ge C$ and $n \ge C$ for a sufficiently large constant $C$. For symmetric, two-component spherical Gaussian mixtures,  given any initial estimator of labels with clustering error $$A_0 < \frac{1}{2} - \frac{2.56+\sqrt{\log r}}{r}- \frac{1}{\sqrt{n}}, \quad w.h.p.$$ Lloyd's algorithm converges linearly to an exponentially small rate after $\lceil 3\log n\rceil$ iterations, which is the minimax rate as $r  \to \infty$ w.h.p.
\end{thm}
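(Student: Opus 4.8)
The plan is to reduce the convergence claim to a one-step contraction inequality for the misclassification error and then iterate it. Write the data as $y_i = z_i^* \theta^* + \sigma g_i$ with true labels $z_i^* \in \{-1,+1\}$ and $g_i \sim N(0, I_d)$, and let $A_s = \frac{1}{n}\sum_i \indc{\zs_i \neq z_i^*}$ be the clustering error at iteration $s$. In the symmetric case the Lloyd updates collapse to $\thetas = \frac{1}{n}\sum_i \zs_i y_i$ for the center and $\zsnext_i = \text{sign}(\iprod{y_i}{\thetas})$ for the labels. The goal is to prove a bound of the form $A_{s+1} \le \rho\, A_s + \delta$ holding with high probability simultaneously for every iterate, where $\rho < 1$ is an absolute constant and $\delta = \exp(-(1+o(1)) r^2/2)$ is the target floor. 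Iterating gives $A_s \le \rho^s A_0 + \delta/(1-\rho)$; choosing $\rho$ bounded away from $1$, the geometric term $\rho^s A_0$ drops below $\max\{\delta, 1/n\}$ within $O(\log n)$ steps, and since the error is always a multiple of $1/n$ this forces $A_s$ down to the floor, so $\lceil 3\log n\rceil$ iterations suffice across the whole range of $r$.

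For the one-step analysis I would first expand the center estimate. Using $\frac1n\sum_i \zs_i z_i^* = 1 - 2A_s$, one gets $\thetas = (1-2A_s)\theta^* + V_s$ with $V_s = \frac{\sigma}{n}\sum_i \zs_i g_i$. A point with true label $z_i^*$ is mislabeled at step $s+1$ precisely when $\iprod{\theta^*}{\thetas} + z_i^* \sigma\iprod{g_i}{\thetas} < 0$, so conditionally on $\thetas$ the per-point error is governed by the Gaussian tail $\Phi\!\pth{-\frac{\iprod{\theta^*}{\thetas}}{\sigma\,\norm{\thetas}}}$. The crux is to show that the effective signal-to-noise ratio $\frac{\iprod{\theta^*}{\thetas}}{\sigma\norm{\thetas}}$ is, up to lower-order terms, at least $(1-o(1)) r$ whenever $A_s$ lies below the initialization threshold: writing $t = 1-2A_s$, the ratio equals $\frac{t\norm{\theta^*}^2 + \iprod{\theta^*}{V_s}}{\sigma\sqrt{t^2\norm{\theta^*}^2 + 2t\iprod{\theta^*}{V_s} + \norm{V_s}^2}}$, and controlling $\norm{V_s} \lesssim \sigma\sqrt{d/n}$ and $\iprod{\theta^*}{V_s}$ shows that once $t\norm{\theta^*} \gtrsim \sigma\sqrt{d/n}$ the signal term dominates and the ratio is essentially $\norm{\theta^*}/(\sigma\sqrt{1+9d/n}) = r$. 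The precise threshold $\frac{2.56+\sqrt{\log r}}{r}$ is exactly what is needed so that even at the worst admissible $A_0$ this regime is reached, the $2.56$ and $\sqrt{\log r}$ tracking the Gaussian-tail and $\norm{V_s}$ constants and the $1/\sqrt n$ absorbing the fluctuation of $\frac1n\sum_i \zs_i z_i^*$.

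The main obstacle is the statistical dependence that makes the heuristic $\E[A_{s+1}\mid \thetas] \approx \Phi(-r)$ non-rigorous: the noise $g_i$ enters the indicator both directly through $y_i$ and through $\thetas$ (the self-term $\frac{\sigma}{n}\zs_i\norm{g_i}^2$ inside $\iprod{g_i}{\thetas}$), and the labels $\zs$ are themselves functions of the whole sample, which is precisely what forces earlier works to stop at one step. I would resolve this with a leave-one-out device: define $\thetas_{-i} = \thetas - \frac{1}{n}\zs_i y_i$, show $\norm{\thetas - \thetas_{-i}}$ is negligible, and replace $\thetas$ by the nearly-independent $\thetas_{-i}$ in the $i$-th indicator so that the randomness of $g_i$ decouples from the center direction. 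To handle the dependence of the labels $\zs$ on the data, I would prove the contraction \emph{uniformly} over the entire basin $\{z : \frac1n\sum_i\indc{z_i\neq z_i^*}\le A_0\}$, using concentration of $\sup_z\norm{\frac{\sigma}{n}\sum_i z_i g_i}$ and of the number of misclassified points whose noise is atypically large; this uniform control is what upgrades the conditional Gaussian-tail heuristic into a bound valid simultaneously for all iterates with high probability.

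Finally, I would iterate the contraction to reach the floor and then match it from below. The bound $A_s \le \rho^s A_0 + \delta/(1-\rho)$ gives $A_{\lceil 3\log n\rceil} \lesssim \delta = \exp(-(1+o(1))r^2/2)$. For optimality as $r\to\infty$ I would establish a matching minimax lower bound by a testing/Le Cam argument: even an oracle that knows $\theta^*$ must classify each point by thresholding $\iprod{y_i}{\theta^*}$, incurring per-point error $\Phi(-\norm{\theta^*}/\sigma) = \exp(-(1+o(1))\norm{\theta^*}^2/(2\sigma^2))$, so no estimator beats $\delta$ up to the $(1+o(1))$ in the exponent. This lower-bound step is itself delicate, since it must track the same exponent as the algorithmic upper bound rather than merely its order, but the central difficulty remains the decoupling that makes the multi-step contraction rigorous.
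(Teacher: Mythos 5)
Your overall architecture matches the paper's: decompose the center update as $\thetas=(1-2A_s)\theta^*+(\text{noise})$, reduce the label error at step $s+1$ to an indicator involving $\iprod{w_i}{\thetas}$, prove a one-step contraction, and iterate for $O(\log n)$ steps. The lower-bound sketch is also consistent with the paper's Theorem \ref{thm:lower_bound}. However, two of your key devices are underspecified in ways that matter. First, the leave-one-out step as written --- setting $\thetas_{-i}=\thetas-\frac1n\zs_i y_i$ --- removes only the direct self-term $\frac{\sigma^2}{n}\|g_i\|^2$; the labels $\zs_j$ for $j\neq i$ are still functions of the whole sample including $g_i$ (through all earlier iterations), so $\thetas_{-i}$ is not independent of $g_i$. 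A genuine leave-one-out argument would require constructing an entire auxiliary iterate sequence run on the dataset with sample $i$ deleted and proving it tracks the true sequence, which is a substantially larger construction than what you describe. Second, your fallback of proving the contraction ``uniformly over the basin'' hits a counting problem: the quantity that must be controlled uniformly is the fraction of points $\frac1n\sum_i\indc{\cdot}$, and a union bound over the $\approx 2^n$ label configurations in the basin forces a Hoeffding deviation of order $1$, destroying the bound. The paper circumvents exactly this by splitting the indicator so that the dominant term $\mathbb{I}\{\iprod{w_i}{\theta^*}\le-\gamma_{A_s}\|\theta^*\|^2\}$ depends on the configuration only through the scalar $A_s$ (union bound over $O(n)$ values), while all configuration-dependent directions are pushed into a quadratic term bounded by Markov's inequality together with $\lambda_{\max}(\sum_i w_iw_i')$ and a uniform-over-subsets bound on $\|W_S\|$ (Lemmas \ref{lm:tech4} and \ref{lm:tech6}), for which $2^n$-union bounds are affordable. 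You need one of these mechanisms; neither appears in your plan.

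There is also a regime where your single high-probability recursion $A_{s+1}\le\rho A_s+\delta$ with $\delta=\exp(-(1+o(1))r^2/2)$ cannot deliver the claimed floor. Any uniform high-probability bound on the counting statistic carries a fluctuation term of order $\sqrt{\log n/n}$ (this is the $\sqrt{4\log n/n}$ in the paper's \eqref{eq:twoconst}), and when $\log n\lesssim r^2\lesssim 2\log n$ one has $1/n\le\delta\le\sqrt{\log n/n}$, so the iteration stalls above the target and the discreteness-of-$A_s$ argument does not rescue it since $\delta>1/n$ there. The paper handles this by switching, once $A_s$ is small, to a recursion in expectation, $\E A_{s+1}\le\frac1r\E A_s+3\exp(-\|\theta^*\|^2/(8\sigma^2))$, followed by Markov's inequality (sacrificing a constant in the exponent that is recovered as $r\to\infty$) and only then invoking discreteness for exact recovery when $\|\theta^*\|>4\sigma\sqrt{\log n}$. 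Your plan needs an analogous expectation-plus-Markov phase to cover the full range of $r$ claimed in the theorem.
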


The results above are extended to general number of clusters $k$ and to (non-spherical) sub-Gaussian distributions under an appropriate initialization condition and a signal-to-noise ratio condition, which, to the best of our knowledge, are the weakest conditions in literature. 

Now we discuss the contributions of this paper in detail. Our contributions are three folds. First, we give statistical guarantees of Lloyd's algorithm. Starting with constant clustering error, by an appropriate initializer, such as spectral clustering, we show an exponentially small clustering error rate of Lloyd's algorithms, under a weak signal-to-noise ratio condition. We also provide a rate-matching lower bound to show that Lloyd's algorithm initialized by spectral clustering is minimax optimal. When the clusters sizes are of the same order and the distance between different centers are of the same order, our signal-to-noise condition reduces to $\Delta \gtrsim \sigma k \sqrt{1+kd/n}$, where $\Delta$ is the minimum Euclidean distances between two different cluster centers. Previous results on Lloyd's algorithms focus on exact recovery of the labels and they assume sample size $n \gg kd$ \cite{kumar2010clustering, awasthi2012improved}. The best known signal-to-noise ratio condition is $\Delta \gtrsim \sigma k \textrm{ polylog } n$ \cite{awasthi2012improved}. Our condition is weaker by a $\textrm{polylog }n$ factor. Moreover, our results also hold for the high-dimensional case where $d$ could be larger than $n$.  

Second, we provide computational guarantees of Lloyd's algorithm. We prove a linear convergence rate of Lloyd's iterations given a label initializer of constant clustering error. The linear convergence rate depends on the signal-to-noise ratio. Larger signal-to-noise ratio leads to faster convergence rate. Counterexamples are constructed in the appendix to show that the basin of attractions we establish in Theorem \ref{thm:inf1} and in the general $k$ case are almost necessary.  It is worthwhile to point out that the initialization condition in Theorem \ref{thm:inf1} is just slightly stronger than random guess. As implied by the minimax lower bound (Theorem \ref{thm:lower_bound}), a necessary condition for consistently estimating the labels is $\frac{\Delta}{\sigma} \to \infty$. In addition, when $\Delta \gtrsim \sigma d^{1/4}$, we are able to prove a random initialization scheme works with high probability. 

Third, we develop new proof techniques for analyzing two-stage estimators. 
Previous analyses usually require the first stage's estimator to be consistent to achieve the minimax optimal rates \cite{zhang2014spectral, gao2015achieving}. In contrast, we only need a constant clustering error of the first stage estimator by introducing a new technique to analyze random indicator functions. In addition, we are able to improve previous signal-to-noise ratio conditions in community detection and crowdsourcing by considering two variants of Lloyd's algorithms. Simulated and real data experiments show our algorithms are competitive as compared to the state-of-the-art algorithms for both problems. 

\subsection{Related work on Gaussian mixture models} \label{sec:related} The study of Gaussian mixture model \cite{pearson1894contributions} has a long and rich history. We give a very brief review here. In the original paper of Pearson \cite{pearson1894contributions}, methods of moments estimators were first proposed, followed by the work of \cite{day1969estimating, lindsay1993multivariate}. They all involve solving high-order moments equations, which is computationally challenging as the sample size or the dimensionality grows. Recently, third-order moments was proved to be sufficient by using tensor decomposition techniques \cite{chang1996full, anandkumar2012method, hsu2013learning}. However, their sample size requirement is in high-order polynomial of $d$ and $k$ thus can not be extended to the high-dimensional case. Another line of research focuses on spectral projections and their variants. By using different dimension reduction technique, they keep improving the separation condition for general Gaussian mixture models \cite{mcsherry2001spectral, achlioptas2005spectral, kannan2005spectral, moitra2010settling, hardt2015tight} and spherical Gaussian mixture models \cite{vempala2004spectral}. These works almost all focus on estimating the centers.  Little is known about the convergence rate of estimating the cluster labels. 

Another popular algorithm for Gaussian mixture model is the EM algorithm \cite{dempster1977maximum}. \cite{xu1996convergence} analyzed the local convergence of EM for well-separated Gaussian mixtures. \cite{dasgupta2007probabilistic} showed that a two-round variant of EM algorithm with a special initializer is consistent under a strong separation condition that $\Delta \gtrsim \sigma \sqrt{d}$. Recently, a lot of attention has been gained on the computational guarantees of EM algorithm. Two component, symmetric Gaussian mixture is most widely studied due to its simple structure. For this model, \cite{balakrishnan2014statistical} first proves the linear convergence of EM if the center is initialized in a small neighborhood of the true parameters, which implies an exponentially small error rate of labels after one-step label update. \cite{klusowski2016statistical} extends their basin of attraction to be the intersection of a half space and a ball near the origin. More recently, \cite{xu2016global, daskalakis2016ten} prove the global convergence of the EM algorithm given infinite samples. While these results are encouraging, it is unclear whether their technique can be generalized to the $k$-mixtures or non-Gaussian cases. \cite{jin2016local} provides some interesting negative results of EM algorithms. When $k \ge 3$, counterexamples are constructed to show there is no general global convergence of EM algorithm by uniformly initializing the centers from data points.

\subsection{Organization and Notation}
The rest of this paper is organized as follows. We introduce the sub-Gaussian mixture problem and Lloyd's algorithm in Section \ref{sec:prelim}. Section \ref{sec:main} gives statistical and computational guarantees of Lloyd's algorithm, and discusses their implications when using spectral initialization. In Section \ref{sec:app}, we consider two variants of Lloyd's algorithms for community detection and crowdsourcing and establish their theoretical properties. A number of experiments are conducted in Section \ref{sec:data} that confirm the sharpness of our theoretical findings and demonstrate the effectiveness of our algorithms on real data.  In Section \ref{sec:diss}, we discuss our results on random initialization and the error of estimating centers. Finally, we present the main proofs in Section \ref{sec:proof}, with more technical part of the proofs deferred to the appendix. 

Throughout the paper, let $ [m] = \{1,2,\cdots,m\}$ for any positive integer $m$. For any vector $a$, $\|a\|$ is the $\ell_2$ norm of $a$. $\mathbb{I}\{ \cdot \}$ is the indicator function. Given two positive sequences $\{a_n\}$ and $\{b_n\}$, $a_n \gtrsim b_n$ means there is a universal constant $C$ such that $a_n \ge C b_n$ for all $n$, and define $a_n \lesssim b_n$ vice versa. We write $a_n \asymp b_n$ if $a_n \gtrsim b_n$ and $a_n \lesssim b_n$.  Denote $a_n = o(b_n)$ if $a_n/b_n \to 0$ as $n \to \infty.$

\section{Model and Lloyd's algorithm} \label{sec:prelim}
In this section, we first introduce the mixture of sub-Gaussians, a standard model for $k$-means, then briefly review Lloyd's algorithm, followed by introducing spectral clustering algorithms as a way of initialization.  
\subsection{Mixture of sub-Gaussians}
Suppose we observe independent samples $y_1, y_2, \cdots, y_n \in \mathbb{R}^d$ from a mixture of $k$ sub-Gaussian distributions, \begin{equation} \label{eq:model} 
y_i = \theta_{z_i} + w_i  \;\textrm{ for } i \in [n],
\end{equation}
where $z_1, z_2, \cdots, z_n \in [k]$ are the underlying labels,  and $\theta_1, \cdots, \theta_k \in \mathbb{R}^d$ are unknown centers of those $k$ distributions. 
We assume the noise $\{w_{i}, i \in [n]\}$ are independent zero mean sub-Gaussian vectors with parameter $\sigma>0$, i.e.
\begin{equation} \label{eq:subgaussian} 
\E e^{\iprod{a}{w_{i}}} \le e^{\frac{\sigma^2\|a\|^2}{2}}, \quad \textrm{for all } i \in [n] \textrm{ and } a \in \mathbb{R}^{d}.
\end{equation} 

A special case of model (\ref{eq:model}) is the symmetric, two-component mixture model, in which the two centers are $\theta^*$ and $-\theta^*$. We observe $y_1, y_2, \cdots, y_n$ from the following generative model,
\begin{equation} \label{eq:modeltwo}
y_i = z_i \theta^* + \xi_i,
\end{equation}
where $z_i \in \{-1,1\}$ and $\{\xi_{i}, i \in [n]\}$ are independent Gaussian noise with covariance matrix $\sigma^2 I_d$. Here with a little abuse of notation, we also use $z_i \in \{-1,1\}$ to denote the underlying labels. As arguably the simplest mixture model, this special model has recently gained some attention in studying the convergence of EM algorithm \cite{balakrishnan2014statistical, klusowski2016statistical, xu2016global}.  Other examples of model (\ref{eq:model}) are planted partition model \cite{mcsherry2001spectral} for random graphs, stochastic block model \cite{holland1983stochastic} for network data analysis and Dawid-Skene model \cite{dawid1979maximum} for crowdsourcing (see Section \ref{sec:app} for more tails). 

For these clustering problems, our main goal is to recover the unknown labels $z_i$ rather than to estimate the centers $\{\theta_j\}$. Note that the cluster structure is invariant to the permutations of label symbols. We define the mis-clustering rate of estimated labels $\hat{z}_1, \cdots, \hat{z}_n$ as
\begin{equation} \label{eq:loss}
L(\hat{z}, z) = \inf_{\pi \in \mathcal{S}_k} \left[ \frac{1}{n} \sum_{i=1}^{n} \mathbb{I}\left\{ \pi(\hat{z}_i) \neq z_i \right\} \right], 
\end{equation}
where $\mathcal{S}_k$ is the collection of all the mappings from $[k]$ to $[k]$.

\subsection{Lloyd's Algorithm}
Lloyd's algorithm was originally proposed to solve the following $k$-means problem. Given $n$ vectors $y_1, y_2, \cdots, y_n \in \mathbb{R}^d$ and an integer $k$, the goal is to find $k$ points $\theta_1, \theta_2, \cdots,  \theta_k \in \mathbb{R}^d$ to minimize the following objective function
\begin{equation} \label{eq:kmeans}
\sum_{i \in [n]} \min_{j \in [k]} \|y_i - \theta_j\|^2.
\end{equation}
The problem above is equivalent to find $\hat{\theta}_1, \cdots, \hat{\theta}_k \in \mathbb{R}^d$ and $\hat{z}_1, \cdots, \hat{z}_n \in [k]$ such that
\begin{equation} \label{eq:kmeans2}
(\hat{\theta}, \hat{z}) = \argmin_{(\theta, z)} \sum_{i \in [n]} \left\|y_i - \sum_{j=1}^{k} \theta_j \mathbb{I}\{z_i = j\} \right\|^2.
\end{equation}
From a statistical point of view, (\ref{eq:kmeans2}) is the maximum likelihood estimator of model (\ref{eq:model}) with spherical Gaussian noise. It has been proved by Pollard \cite{pollard1981strong, pollard1982central} the strong consistency and central limit theorem of using (\ref{eq:kmeans}) to estimate the centers $\theta$. 

Lloyd's algorithm is simply motivated by the following observation of (\ref{eq:kmeans2}). If we fix $\theta$, $\hat{z}_i$ is the index of the center that $y_i$ is closest to, and if we fix $z$, $\hat{\theta}_j$ is the sample mean of those $y_i$ with $z_i = j$. If we start with an initial estimate of centers or labels, we can iteratively update the labels and centers. Therefore, we have the following Lloyd's algorithm.
\begin{itemize}
\item [1.] Get an initial estimate of the centers or the labels.
\item [2.] Repeat the following iteration until convergence.
\begin{itemize}
 \item[2a.] \text{For} {$h=1,2,\cdots k$}, 
\begin{equation} \label{eq:centerupdate} 
\thetas_h = \frac{\sum_{i=1}^{n} y_i \mathbb{I}\{\hat{z}^{(s)}_i=h\}}{ \sum_{i=1}^{n} \mathbb{I}\{\hat{z}^{(s)}_i=h\}}.
\end{equation}
\item[2b.]  \text{For} {$i=1,2,\cdots n$}, 
\begin{equation} \label{eq:labelupdate} 
\zsnext_i =  \argmin_{h \in [k]} \|y_i - \thetas_h \|^2.
\end{equation}
\end{itemize}
 \end{itemize}
For model (\ref{eq:modeltwo}), since the centers are parametrized by a single vector $\theta^*$, the center update step (\ref{eq:centerupdate}) can be simplified to 
\begin{eqnarray}
\label{eq:center} \thetas &=& \frac{1}{n} \sum_{i=1}^{n} \hat{z}^{(s)}_i y_i.
\end{eqnarray}

Lloyd's algorithm is guaranteed to converge because it is a greedy algorithm that keeps decreasing the value of objective function (\ref{eq:kmeans2}). However, it may only converge to a local minimum and thus a good initializer is needed. 
%
%
%
One possible way of initializing Lloyd's algorithm is to use spectral methods. Note that the signal matrix $\theta$ in model (\ref{eq:model}) is low rank. Spectral methods first do a de-noising step by projecting the data onto the subspace spanned by its top singular vectors, which approximately preserves the clustering structure. Since solving the $k$-means problem is NP-hard in general \cite{dasgupta2008hardness}, we run an approximated $k$-means algorithm \cite{song2010fast} on the projected data matrix, whose running time is polynomial in $n,k$ and $d$. There are many versions of spectral clustering algorithms and here is the pseudo code of the one used in this paper.  
\begin{itemize}
\item[1.] Compute the SVD of the data matrix $Y=[y_1,\cdots, y_n] = UDV'$. Let $U_k$ be the first $k $ columns of $U$.
\item[2.] Project $y_1,\cdots, y_n$ onto $U_k$, i.e. let $\hat{y_i} = U_kU_k' y_i$ for $i \in [n]$.
\item[3.] Run a $O(1)$-approximation algorithm \cite{song2010fast} for $k$-means problem on the columns of projected matrix $\hat{Y} = [\hat{y}_1,\cdots, \hat{y}_n]$.
\end{itemize}
We refer this algorithm as spectral clustering algorithm henceforth. A lot of progress has been made in studying theoretical properties of this spectral clustering algorithm. For more details, we refer to \cite{kannan2009spectral, awasthi2012improved} and references therein. 

\section{Clustering sub-Gaussian mixtures} \label{sec:main}
In this section, we present the results of using Lloyd's algorithm to clustering sub-Gaussian mixtures. In Section \ref{sec:2main}, we give the convergence results of Lloyd's algorithm to the symmetric, two component mixture model (\ref{eq:modeltwo}). Then we extend it to the general $k$ mixture model (\ref{eq:model}) in Section \ref{sec:kmain}. A minimax lower bound is established in Section \ref{sec:lower}. 

\subsection{Two mixtures} \label{sec:2main}
A key quantity in determining the basin of attraction of Lloyd's algorithm is the following normalized signal-to-noise ratio
$$ r = \frac{\|\theta^*\|}{\sigma \sqrt{1+\eta}}, $$
where $\eta=9d/n$. Here we normalize the signal-to-noise ratio by $\sqrt{1+\eta}$ because the statistical precision of estimating $\theta^*$ is at the order of $\sigma \sqrt{\eta}$. If $\|\theta^*\| \lesssim \sigma \sqrt{\eta}$, information theoretically we could not distinguish between two centers with positive probability, even when the labels are known. Let $A_s=\frac{1}{n} \sum_{i=1}^{n} \mathbb{I}\{\hat{z}_i^{(s)} \neq z_i \}$ be the mis-clustering rate at the iteration $s$ of Lloyd's algorithm, and $\hat{\theta}^{(s)}$ be the estimated centers at step $s$, $s=0,1,2,\cdots$. We have the following theorem that characterizes the behavior of $A_s$.

\begin{thm} \label{thm:2main}
Assume $n \ge C$ and $r \ge C$ for a sufficiently large constant $C$. For any given (data dependent) initializer satisfying
\begin{equation} \label{eq:2cond}
A_0 \le  \frac{1}{2}- \frac{2.56+\sqrt{\log r}}{r} - \frac{1}{\sqrt{n}} \quad \textrm{or} \quad \|\hat{\theta}^{(0)}-\theta^*\| \le \left(1 -\frac{4}{r} \right) \|\theta^*\|,
\end{equation}
with probability $1-\nu$, we have
\begin{equation} \label{eq:twoconst}
A_{s+1} \le  \left(A_s+\frac{8}{r^2}\right) A_s + \frac{2}{r^2} + \sqrt{\frac{4\log n}{n}}, \quad \textrm{for all } s \ge 0
\end{equation}
with probability greater than $1 - \nu - n^{-3}- 2\exp\left(-\frac{\|\theta^*\|^2}{3\sigma^2}\right)$,
and
\begin{equation} \label{eq:tworate}
A_{s} \le \exp \left( - \frac{\|\theta^*\|^2}{16\sigma^2} \right), \;\; \text{ for all } s \ge 3\log n
\end{equation}
with probability greater than $1 - \nu - 5n^{-1} -  8\exp\left(-\frac{\|\theta^*\|^2}{16\sigma^2}\right)$. Moreover, if $r \to \infty$ as $n \to \infty$, the error rate in (\ref{eq:tworate}) can be improved to $\exp \left( - (1+o(1))\frac{\|\theta^*\|^2}{2\sigma^2} \right)$.
\end{thm}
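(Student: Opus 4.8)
The plan is to bootstrap the crude rate (\ref{eq:tworate}) into the sharp one by performing a single refined one-step analysis once the iterate has entered a small neighbourhood of $\theta^*$, replacing the crude constant $1/16$ by the oracle constant $1/2$. The starting point is the exact decomposition of the simplified center update (\ref{eq:center}): since $\sum_i \zs_i z_i = n(1-2A_s)$, I write $\thetas = (1-2A_s)\theta^* + g^{(s)}$ with $g^{(s)} = \frac1n\sum_i \zs_i\xi_i$. A point $i$ is mislabeled at step $s+1$ exactly when $z_i\iprod{y_i}{\thetas}<0$; substituting $y_i = z_i\theta^*+\xi_i$, using $z_i^2=1$, and dividing by $1-2A_s>0$, this is equivalent to
\[
z_i\iprod{\xi_i}{\theta^*} < -\norm{\theta^*}^2 - \frac{\iprod{\theta^*}{g^{(s)}} + z_i\iprod{\xi_i}{g^{(s)}}}{1-2A_s}.
\]

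The crucial point is that the leading random quantity $z_i\iprod{\xi_i}{\theta^*}$ is an \emph{exactly} $N(0,\sigma^2\norm{\theta^*}^2)$ variable that is independent across $i$, because $\theta^*$ and $z_i$ are deterministic; all data-dependence is confined to the correction terms on the right. I would show that on a high-probability event these corrections are uniformly negligible. By (\ref{eq:tworate}), $A_s$ is already exponentially small, so $1-2A_s = 1-o(1)$; and splitting $g^{(s)} = \frac1n\sum_i z_i\xi_i - \frac2n\sum_{i\in S_s} z_i\xi_i$ over the mislabeled set $S_s$ (of size $nA_s$), one checks that both $\iprod{\theta^*}{g^{(s)}}$ and $\max_i |z_i\iprod{\xi_i}{g^{(s)}}|$ are $o(\norm{\theta^*}^2)$: the label-independent piece contributes at the statistical scale $\sigma\sqrt\eta\,\norm{\theta^*}$, while the piece over $S_s$ is controlled by a uniform-over-subsets bound and is smaller still because $|S_s|$ is tiny. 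Hence the threshold is perturbed only to $-(1-o(1))\norm{\theta^*}^2$, and on this event
\[
A_{s+1} \le \frac1n\sum_{i=1}^n \indc{z_i\iprod{\xi_i}{\theta^*} < -(1-o(1))\norm{\theta^*}^2}.
\]

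The right-hand side is now an average of i.i.d.\ Bernoulli indicators with success probability $\Phi\!\big(-(1-o(1))\norm{\theta^*}/\sigma\big) \le \exp\!\big(-(1-o(1))\norm{\theta^*}^2/(2\sigma^2)\big)$, the oracle Bayes error. A Chernoff/Bernstein bound transfers this concentration to $A_{s+1}$ (bounding $\Prob[\mathcal{E}^c]$ and the unconditional average separately, since the clean indicators do not depend on the good event $\mathcal{E}$). Because $r\to\infty$ forces the correction factor to vanish, the exponent converges to the minimax value $\norm{\theta^*}^2/(2\sigma^2)$, matching the lower bound of Theorem \ref{thm:lower_bound}; one further sharpened step past the $\lceil 3\log n\rceil$-th iteration then delivers the claimed rate for all subsequent $s$.

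The main obstacle is the uniform control of the data-dependent term $\frac2n\sum_{i\in S_s} z_i\xi_i = \frac2n\sum_i \indc{\zs_i\ne z_i}\, z_i\xi_i$, in which the indicators are correlated with the very noise they multiply. A naive bound over the random support $S_s$ inflates either the effective variance $\sigma^2\norm{\thetas}^2$ or the threshold $\norm{\theta^*}^2$ by a constant factor, which would destroy the sharp $1/2$. The ``random indicator function'' technique must instead bound this sum uniformly over all subsets of the small size $nA_s$ and show its projections onto $\theta^*$ and onto each $\xi_i$ are $o(\norm{\theta^*}^2)$; carrying this through while preserving the Gaussian-tail constant is the crux of the argument.
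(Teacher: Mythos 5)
There is a genuine gap: your proposal proves only the final ``Moreover'' clause of the theorem and takes the rest as given. You explicitly start from (\ref{eq:tworate}) (``by (\ref{eq:tworate}), $A_s$ is already exponentially small'') in order to sharpen the exponent from $1/16$ to $(1+o(1))/2$, but (\ref{eq:tworate}) is itself a conclusion of the theorem, as is the contraction inequality (\ref{eq:twoconst}). The heart of the paper's proof — and the hardest part of the statement — is showing that from an initializer with $A_0$ only slightly below $1/2$ the iterates contract at all. In that regime $1-2A_s$ is close to $0$, so your step of dividing by $1-2A_s$ and treating the corrections as negligible relative to $\|\theta^*\|^2$ breaks down completely. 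The paper handles this by keeping the margin $\gamma_{A_s}=1-2A_s-5.12/r-1/\sqrt{n}$ explicit, taking a union bound over the $O(n)$ discrete values $A_s$ can take (so Hoeffding applies to the clean indicators $T_i(a)$ for each fixed $a$), controlling the correlated term by Markov's inequality together with the operator-norm bound on $\sum_i w_iw_i'$ (Lemma \ref{lm:tech6}), and then running an induction to show $A_s\le\tau$ and hence $\gamma_{A_s}\ge 2\sqrt{\log r}/r$ for all $s$. None of this appears in your proposal, and neither does the center-initialization branch $\|\hat\theta^{(0)}-\theta^*\|\le(1-4/r)\|\theta^*\|$ of hypothesis (\ref{eq:2cond}).

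For the part you do address, your decomposition $\thetas=(1-2A_s)\theta^*+g^{(s)}$ matches the paper's (\ref{eq:decomptheta}), and isolating an exactly Gaussian leading term $z_i\iprod{\xi_i}{\theta^*}$ is also what the paper does. But your plan to control the cross term by showing $\max_i|z_i\iprod{\xi_i}{g^{(s)}}|=o(\|\theta^*\|^2)$ uniformly over $i$ is lossier than what the theorem permits: since $\max_i\|\xi_i\|\asymp\sigma\sqrt{d+\log n}$ and $\|\bar\xi\|\asymp\sigma\sqrt{d/n}$, a uniform max bound on $\iprod{\xi_i}{\bar\xi}$ is of order $\sigma^2\sqrt{d\log n/n}+\sigma^2 d/n$, which is not $o(\|\theta^*\|^2)\gtrsim o(\sigma^2(1+d/n))$ when, e.g., $d\asymp n$ and $r$ is a large constant. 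The paper instead bounds the \emph{average} of the offending indicators — via Markov's inequality and $\lambda_{\max}(\sum_i w_iw_i')$ for the $R$-part (yielding the contraction $J_2\le \frac1r A_s$), and via the per-$i$ tail bound of Lemma \ref{lm:tech5} in expectation for the $\bar w$-part — which is exactly what preserves the sharp Gaussian constant without needing a uniform max. You correctly identify this correlation issue as the crux, but the specific route you sketch for it would not close.
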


The proof of Theorem \ref{thm:2main} is given in Section \ref{sec:proof}. We will first show that after a few iterations, the mis-clustering ratio $A_s$ is sufficiently small (for example, smaller than 1/8) by proving $(\ref{eq:twoconst})$, then prove mis-clustering rate of Lloyd's algorithm is exponentially small with an exponent determined by the signal-to-noise ratio. Note that the mis-clustering rate $A_s$ only takes discrete values in $\{0, n^{-1}, 2n^{-1}, \cdots, 1\}$. If $\|\theta^*\| > 4\sigma \sqrt{\log n}$, (\ref{eq:tworate}) implies $A_s <1/n$, and thus Theorem \ref{thm:2main} guarantees a zero clustering error after $\lceil 3 \log n \rceil$ Lloyd's iterations with high probability.

By Theorem \ref{thm:2main}, the convergence of Lloyd's algorithm has three stages. In the first iteration, it escapes from a small neighborhood around $1/2$ and achieves an error rate $A_s \le 1/2-c$ for a small constant $c$. Then it has a linear convergence rate, which depends on the signal-to-noise ratio. Finally, similar to other two-stage estimators \cite{zhang2014spectral, gao2015achieving}, once the the mis-clustering rate $A_s$ is sufficiently small, it converges to the optimal statistical precision of the problem after one or more iterations.

As we shall see in Section \ref{sec:lower}, a necessary condition for consistently estimating the labels is $\|\theta^*\|/\sigma \to \infty$. Therefore, our condition on the initializer is just slightly stronger than the random initialization error $1/2$. Balakrishnan et al. \cite{balakrishnan2014statistical} studied the convergence of EM algorithm under the same model. They require an initializer $\hat{\theta}^{(0)} \in \mathbb{B}(\theta^*, \|\theta^*\|/4)$ under the assumption that $r$ is sufficiently large. Here we replace $1/4$ by a factor close to $1$ in Theorem \ref{thm:2main}. More specifically, the factor in the initialization condition is determined by the signal-to-noise ratio. The stronger the signal-to-noise ratio is, the weaker initialization condition we need.  

\subsection{$k$ mixtures} \label{sec:kmain} 
Now we consider general number of mixtures. To better present our results, we first introduce some notation. For all $h \in [k]$, let $T_h^*$ be the true cluster $h$ and $T_h^{(s)}$ be the estimated cluster $h$ at iteration $s$. Define $n_h^* = |T_h^*|$, $n_h^{(s)}=|T_h^{(s)}|$ and $n_{hg}^{(s)}=|T_h^* \cap T_g^{(s)}|$. The mis-clustering rate at iteration $s$ can be written as 
\[ A_s = \frac{1}{n} \sum_{i=1}^{n} \mathbb{I}\{\hat{z}_i^{(s)} \neq z_i\} = \frac{1}{n} \sum_{g \neq h \in [k]^2} n_{gh}^{(s)}.  \]
We define a cluster-wise mis-clustering rate at iteration $s$ as
\[ G_{s} = \max_{h \in [k]}\left\{\frac{\sum_{g \neq h \in [k]} n_{gh}^{(s)}}{n_h^{(s)}}, \frac{\sum_{g \neq h \in [k]} n_{hg}^{(s)}}{n_h^*}   \right\}. \]
The first term in the maximum operator of definition of $G_s$ can be understood as the false positive rate of cluster $h$ and the second term is the true negative rate of cluster $h$. It is easy to see the relationship that $A_s \le G_s$.

Let $\Delta = \min_{g \neq h \in [k]} \|\theta_g- \theta_h\|$ be the signal strength. For $h \in [k]$, let $\hat{\theta}^{(s)}_h$ be the estimated center of cluster $h$ at iteration $s$. Define our error rate of estimating centers at iteration $s$ as
\[ \Lambda_s = \max_{h \in [k]} \frac{1}{\Delta} \|\hat{\theta}^{(s)}_h - \theta_h\|. \]
Besides signal-to-noise ratio, there are other two factors that determine the convergence of Lloyd's algorithm, the maximum signal strength and the minimum cluster size. As argued by \cite{jin2016local}, when one cluster is very far away from other clusters, local search may fail to find the global optimum, which indicates initialization should depend on the maximum signal strength.  Since the cluster sizes influence the accuracy to estimate centers, we a lower bound on the size of the smallest clusters. Define $\lambda = \max_{g \neq h \in [k]} \|\theta_g- \theta_h\|/\Delta$ and $\alpha=\min_{h \in k} n_h^*/n$. Similar to the two-cluster case, we define a normalized signal-to-noise ratio
\begin{equation}
r_k = \frac{\Delta}{\sigma} \sqrt{\frac{\alpha}{1+kd/n}}.
\end{equation}
Now we are ready to present our results for the $k$ mixtures. 

\begin{thm} \label{thm:kmeans}
Assume $n\alpha^2  \ge C k \log n  $ and $ r_k \ge C \sqrt{k} $ for a sufficiently large constant $C$. Given any (data dependent) initializer satisfying
\begin{equation} \label{eq:kcond}
G_0  < \left( \frac{1}{2} - \frac{6}{\sqrt{r_k}} \right)\frac{1}{\lambda} \quad \textrm{or} \quad \Lambda_0 \le \frac{1}{2} - \frac{4}{\sqrt{r_k}},
\end{equation}
with probability $1-\nu$, we have
\begin{equation} \label{eq:kconst}
G_{s+1} \le  \frac{C_1}{r_k^2} G_s + \frac{C_1}{r_k^2} + \sqrt{\frac{5k \log n}{\alpha^2 n}}  \quad \textrm{for all } s \ge 1
\end{equation}
for some constant $C_1 \le C$ with probability greater than $1 -\nu -n^{-3}$,  and
\begin{equation} \label{eq:efficiency}
A_{s} \le \exp \left( - \frac{\Delta^2}{16\sigma^2} \right) \;\; \text{ for all } s \ge 4\log n 
\end{equation}
with probability greater than $1- \nu - 4/n-2\exp \left( - \Delta/\sigma \right)$. Moreover, if $\sqrt{k}=o(r_k)$ and $k \log n =o(n\alpha^2)$ as $n \to \infty$, the exponent in (\ref{eq:efficiency}) can be improved to $\exp \left( - (1+o(1)) \frac{\Delta^2}{8\sigma^2} \right)$. 
\end{thm}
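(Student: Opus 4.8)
The plan is to reproduce the three-stage picture of the two-mixture case (Theorem \ref{thm:2main}), but to run the whole argument through the cluster-wise error $G_s$ rather than the global rate $A_s$, since $G_s$ is the quantity that contracts cleanly under the coupling between the center step (\ref{eq:centerupdate}) and the label step (\ref{eq:labelupdate}). The heart of the matter is a pair of interlocking one-step bounds: first, that the center error $\Lambda_s$ is controlled by the clustering error $G_s$ (through step 2a), and second, that the next clustering error $G_{s+1}$ is controlled by $\Lambda_s$ (through step 2b). Composing the two produces a self-map $G_s \mapsto G_{s+1}$ with contraction factor $O(1/r_k^2)$ plus an additive statistical floor, which is exactly (\ref{eq:kconst}). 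Iterating this recursion drives $G_s$ from a constant down to the floor $\sqrt{5k\log n/(\alpha^2 n)}$ geometrically, and a final, sharper pass through the label step then upgrades the polynomial floor to the exponential rate (\ref{eq:efficiency}).

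For the center half I would write $\thetas_h - \theta_h = \frac{1}{n_h^{(s)}}\sum_{g\neq h} n_{gh}^{(s)}(\theta_g - \theta_h) + \frac{1}{n_h^{(s)}}\sum_{i \in T_h^{(s)}} w_i$, splitting the deviation into a bias from misassigned points and an averaged-noise term. The bias is bounded by $\lambda\Delta\cdot\frac{\sum_{g\neq h}n_{gh}^{(s)}}{n_h^{(s)}} \le \lambda\Delta\, G_s$ using $\|\theta_g-\theta_h\|\le \lambda\Delta$, which gives $\Lambda_s \lesssim \lambda G_s + (\text{noise})$ and also reconciles the two initialization conditions in (\ref{eq:kcond}), since $G_0 < (\tfrac12-\tfrac{6}{\sqrt{r_k}})/\lambda$ forces $\Lambda_0 < \tfrac12 - O(1/\sqrt{r_k})$. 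The delicate term is the noise average $\frac{1}{n_h^{(s)}}\|\sum_{i\in T_h^{(s)}} w_i\|$: because $T_h^{(s)}$ is itself a function of all the $w_i$, it cannot be treated as a fixed index set. I would split it as noise over the true cluster $T_h^*$ (a fixed set, concentrating at scale $\sigma\sqrt{d/n_h^*}$) plus noise over the symmetric difference $T_h^{(s)}\triangle T_h^*$, whose cardinality is governed by $G_s$, and control the latter by a uniform-in-subset deviation inequality.

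For the label half I would expand the misclassification event $\{\|y_i-\thetas_h\|^2 \le \|y_i-\thetas_g\|^2\}$ for a point $i$ with true label $g$. Writing $y_i = \theta_g + w_i$ and $\thetas_h = \theta_h + \delta_h$ with $\|\delta_h\|\le \Delta\Lambda_s$, the event reduces, after cancellation of the noise component orthogonal to $\theta_g-\theta_h$, to the sub-Gaussian tail event that the projection of $w_i$ onto the unit vector along $\theta_h-\theta_g$ exceeds roughly $(\tfrac12 - \Lambda_s)\|\theta_g-\theta_h\|$ up to a coupling correction $\iprod{w_i}{\delta_g-\delta_h}/\|\theta_g-\theta_h\|$. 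This is exactly where the threshold $\tfrac12 - O(1/\sqrt{r_k})$ in (\ref{eq:kcond}) originates — one needs $\Lambda_s$ bounded away from $1/2$ for the effective threshold to stay positive. During the contraction phase, where $\Lambda_s$ is only known to be below $1/2$, I would bound the misclassification count by a second-moment estimate $\sum_i \mathbb{I}\{P_i \ge \tau_s\} \le \tau_s^{-2}\sum_i P_i^2$, which yields the factor $\sigma^2/\Delta^2 \lesssim 1/r_k^2$; the threshold deficit $\propto \Lambda_s \lesssim \lambda G_s$ together with the cross-term $\iprod{w_i}{\delta_g-\delta_h}$ supply the linear-in-$G_s$ part $\frac{C_1}{r_k^2}G_s$. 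Only in the final refinement pass, once $\Lambda_s$ has shrunk, do I invoke the genuine sub-Gaussian exponential tail at threshold near $\Delta/2$, producing the per-point probability $\exp(-\frac{\Delta^2}{16\sigma^2})$ and hence (\ref{eq:efficiency}).

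The main obstacle is precisely the statistical dependence created by the data-driven sets $T_h^{(s)}$: the misclassification indicators at step $s+1$ are correlated through $\thetas$, which is a function of every sample, so the usual two-stage trick of conditioning on an independent first-stage estimate is unavailable. I would resolve this by establishing, once and for all at the start, a single high-probability event on which $\|\sum_{i\in S} w_i\|$ and the cross terms $\sum_{i\in S}\iprod{w_i}{\cdot}$ are controlled \emph{simultaneously over all index sets $S$} of the relevant sizes; every per-iteration estimate then holds deterministically on this event, so no union bound over iterations is needed. This is the ``random indicator function'' device alluded to in the contributions and is where the bulk of the technical work lies. Finally, since the contraction factor $C_1/r_k^2 \le C_1/(C^2 k)$ is bounded strictly below $1$, $G_s$ reaches the floor within $O(\log n)$ steps, and the stated $4\log n$ iterations leave room for the extra refinement step; the improved exponent $\exp(-(1+o(1))\frac{\Delta^2}{8\sigma^2})$ follows by verifying that when $\sqrt k = o(r_k)$ and $k\log n = o(n\alpha^2)$ both $\Lambda_s$ and the noise corrections are $o(1)$, so the effective threshold converges to $\Delta/2$.
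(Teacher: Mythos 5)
Your proposal follows essentially the same route as the paper's proof: the two interlocking one-step bounds you describe are precisely the paper's Lemma \ref{lm:center} (center error controlled by $\lambda G_s$ plus averaged noise, with the data-driven set $T_h^{(s)}$ split into the fixed set $T_h^*$ and a symmetric difference controlled by $G_s$) and Lemma \ref{lm:labelupdate} (misclassification event reduced to a sub-Gaussian tail at threshold $(1-2\Lambda_s)\|\theta_g-\theta_h\|$ plus a cross term $\iprod{w_i}{\Delta_h-\Delta_g}$ handled by a second-moment/Markov bound), all established deterministically on a single high-probability event with uniform-over-subsets concentration, and the final refinement pass decomposing the indicator into a Chernoff term, a Markov term, and a fixed-set noise term $\iprod{w_i}{W_{T_h^*}/n_h}$ is exactly the paper's $J_1+J_2+J_3$ split. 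The argument is correct and matches the paper's structure.
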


In Theorem \ref{thm:kmeans}, we establish similar results as in the two-mixture case. The initialization condition (\ref{eq:kcond}) here is slightly stronger due to the asymmetry. Similarly, after initialization that satisfies (\ref{eq:kcond}), the convergence has three stages. It first escapes from a small neighborhood of $1/2$, then converges linearly, and finally it achieves an exponentially small mis-clustering rate after $\lceil 4\log n \rceil$ iterations.

From Theorem \ref{thm:kmeans}, Lloyd's algorithm does not require to know or to estimate the covariance structure of Gaussian mixtures. Likelihood-based algorithms, such as EM algorithm and methods of moments, need to estimate the covariance structure. Lloyd's algorithm works for any general sub-Gaussian mixtures satisfying (\ref{eq:subgaussian}), and thus is robust to different models. This demonstrates a key advantage of using Lloyd's algorithm than other likelihood-based algorithms. 

Theoretically, the initialization condition (\ref{eq:kcond}) in Theorem \ref{thm:kmeans} is sharp in a sense that we give a counterexample in Section \ref{sec:counter} showing that Lloyd's algorithm may not converge when $G_0=1/(2\lambda)$ or $\Lambda=1/2$. 

Now let us give a sufficient condition for the spectral clustering initializer to fall into the basin of attraction (\ref{eq:kcond}). By Claim 1 in Section 3.2 of \cite{kannan2009spectral} and Lemma \ref{lm:tech4}, for each center $\theta_h$, there is a center $\mu_h$ returned by spectral clustering algorithm such that $\|\mu_h-\theta_h\| \lesssim (\sqrt{k}/r_k)\Delta$ with probability greater than $1-\exp(-n/4)$, which implies the following corollary. 

\begin{corollary} \label{cor:main}
Assume $\sqrt{k} = o(r_k)$  and $ k \log n =o(n\alpha^2)$ as $n \to \infty$. Let $\hat{z}$ be the output of Lloyd's algorithm initialized by spectral clustering algorithm after $\lceil 4\log n \rceil$ iterations. Then
\begin{equation}
\ell(\hat{z}, z) \le \exp \left( - (1+o(1)) \frac{\Delta^2}{8\sigma^2} \right)
\end{equation}
with probability greater than $1-5n^{-1}-2\exp \left( - \Delta/\sigma \right)$.
\end{corollary}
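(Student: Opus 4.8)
The plan is to read the corollary straight off Theorem \ref{thm:kmeans}: the spectral bound quoted just above the statement places the initializer inside the basin of attraction (\ref{eq:kcond}), and then the ``moreover'' conclusion of Theorem \ref{thm:kmeans} supplies the exponential rate. Essentially all the analytic work has already been done, so what remains is to check that the hypotheses match and to account for the failure probabilities.

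First I would translate the corollary's asymptotic assumptions into the finite-sample hypotheses of Theorem \ref{thm:kmeans}. The assumption $k\log n = o(n\alpha^2)$ gives $n\alpha^2 \ge C k \log n$ for all large $n$, and $\sqrt{k} = o(r_k)$ forces $r_k/\sqrt{k} \to \infty$, hence $r_k \ge C\sqrt{k}$ and in particular $r_k \to \infty$ for $n$ large (since $r_k \ge r_k/\sqrt{k} \to \infty$). Thus both conditions required by Theorem \ref{thm:kmeans} hold eventually.

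Next I would verify that the spectral clustering output lands in (\ref{eq:kcond}). By Claim 1 of \cite{kannan2009spectral} together with Lemma \ref{lm:tech4}, after the appropriate relabeling of clusters there are centers $\mu_h$ with $\|\mu_h - \theta_h\| \lesssim (\sqrt{k}/r_k)\Delta$ for all $h$, with probability at least $1 - \exp(-n/4)$. Taking these as the center initializer $\hat\theta^{(0)}$, the definition of $\Lambda_0$ gives $\Lambda_0 \lesssim \sqrt{k}/r_k$. Since $\sqrt{k}/r_k \to 0$ and $4/\sqrt{r_k}\to 0$ (as $r_k \to \infty$), for all large $n$ we have $\Lambda_0 \le \frac12 - 4/\sqrt{r_k}$, i.e.\ the second branch of (\ref{eq:kcond}) holds, and the initializer's failure probability is $\nu \le \exp(-n/4)$.

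Finally I would apply the ``moreover'' part of Theorem \ref{thm:kmeans}, whose hypotheses $\sqrt{k}=o(r_k)$ and $k\log n = o(n\alpha^2)$ are exactly the corollary's assumptions. For $s \ge 4\log n$ this yields $A_s \le \exp(-(1+o(1))\Delta^2/(8\sigma^2))$, and since the misclustering rate $A_s = \frac1n \sum_i \mathbb{I}\{\hat z_i^{(s)}\neq z_i\}$ dominates the permutation-minimized loss $\ell(\hat z, z) = L(\hat z,z)$ of (\ref{eq:loss}), the same bound transfers to $\ell(\hat z, z)$. For the probability I substitute $\nu \le \exp(-n/4)$ into the guarantee $1 - \nu - 4/n - 2\exp(-\Delta/\sigma)$ accompanying (\ref{eq:efficiency}); using $\exp(-n/4)\le n^{-1}$ for large $n$ this is at least $1 - 5n^{-1} - 2\exp(-\Delta/\sigma)$, as claimed. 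The only real obstacle is the interface bookkeeping --- ensuring the relabeling implicit in the spectral bound is the one against which $\Lambda_0$ is measured, and confirming the union over the two bad events (initializer outside the basin, and the Lloyd iterates failing to contract) is absorbed into the stated probability --- since every substantive estimate is already established in Theorem \ref{thm:kmeans} and the cited spectral results.
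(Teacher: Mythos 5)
Your proposal is correct and follows essentially the same route the paper takes: the spectral-clustering guarantee $\|\mu_h-\theta_h\|\lesssim(\sqrt{k}/r_k)\Delta$ places $\Lambda_0$ inside the second branch of (\ref{eq:kcond}) with $\nu\le\exp(-n/4)\le n^{-1}$, and the ``moreover'' clause of Theorem \ref{thm:kmeans} then delivers the $\exp(-(1+o(1))\Delta^2/(8\sigma^2))$ rate, with the stated probability obtained by absorbing $\nu$ into the $5n^{-1}$ term. The paper treats the corollary as an immediate consequence and gives no further argument, so your bookkeeping of the hypotheses, the relabeling, and the failure probabilities is exactly the content the paper leaves implicit.
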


This corollary gives a sufficient separation (signal-to-noise ratio) condition for clustering sub-Gaussian mixtures. When $n \ge kd$ and all the cluster sizes are of the same order, our separation condition simplifies to $\Delta \gtrsim k \sigma $. When there are finite number of clusters ($k$ is finite), our separation condition further simplifies to $\Delta \gtrsim \sigma$. To our knowledge, this is the first result to give an explicit exponentially small error rate for estimating the labels. Previous studies mostly focus on exact recovery of the cluster labels and have no explicit statistical convergence rate. Furthermore, our results hold for all the range of $d$, while previous results all require $n \gg kd$. 

Previously, the best known separation condition on the Lloyd-type algorithm is $r_k \gtrsim \sqrt{k} \textrm{ poly}\log n$, under the assumption that $n \gg kd/\alpha$  \cite{awasthi2012improved}. Corollary \ref{cor:main} improves it by a $\textrm{poly} \log n$ factor. For Gaussian mixtures, among all algorithms including Lloyd's algorithm, the best known separation condition is $\Delta \gtrsim \sigma(\alpha^{-1/2}+\sqrt{k^2+k \log(nk)})$ under the assumption that $\alpha n \gtrsim k(d+\log k)$ \cite{achlioptas2005spectral}. Our condition is weaker when the cluster sizes are of the same order. When $k$ is finite, our condition is weaker by a $\sqrt{\log n}$ factor.  As we shall see in Section \ref{sec:data}, $k$ is usually small in practice and can often be regarded as a constant. 



\subsection{Minimax lower bound} \label{sec:lower}
To show that the mis-clustering rate rate in Corollary \ref{cor:main} cannot be improved, we present a rate matching lower bound in this section. Define a parameter space as follows,
\begin{equation}
    \begin{split}
 \Theta =    \left\{ (\theta, z), ~ \theta=[\theta_1,\cdots,\theta_k] \in \mathbf{R}^{d \times k}, ~ \Delta \le \min_{g \neq h} \|\theta_g - \theta_h\|, \right. \\
    \left. z: [n] \to [k], ~|\{i \in [n], z_i = u\}| \ge \alpha n, \forall u \in [k] \vphantom{\int_1^2} \right\}.
    \end{split}
\end{equation}
We have the following minimax lower bound. 
\begin{thm} (Lower Bound) \label{thm:lower_bound} 
For model (\ref{eq:model}), assume independent gaussian noise $w_{ij} \stackrel{i.i.d}{\sim}\mathcal{N}(0,\sigma^2)$, then when $\frac{\Delta}{\sigma \log (k/\alpha)} \to \infty$, 
\[ \inf_{\hat{z}} \sup_{(z,\theta) \in \Theta} \E \ell(\hat{z}, z)  \ge \exp \left( - (1+o(1))\frac{\Delta^2}{8\sigma^2} \right). \]
If $\frac{\Delta}{\sigma} + \log (k/\alpha) = O(1)$, $\inf_{\hat{z}} \sup_{(z,\theta) \in \Theta} \E \ell(\hat{z}, z)\ge c$ for some constant $c>0$.
\end{thm}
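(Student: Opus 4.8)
The plan is to prove the minimax lower bound by reducing the clustering problem to a collection of binary hypothesis testing problems, one for each data point, and showing that even an oracle who knows all centers and all labels except one cannot classify that point much better than the stated exponential rate. This is the standard Le Cam / Fano-style argument for establishing that the optimal error rate is governed by the separation between the two closest centers.

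\medskip

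\textbf{Construction of the least-favorable sub-family.} First I would restrict the supremum to a carefully chosen finite subset of $\Theta$. I would fix $k$ centers $\theta_1,\dots,\theta_k$ that realize the minimal separation $\Delta$ exactly (e.g.\ place two of them at distance $\Delta$ apart and the rest far away so they play no role), and fix a balanced label assignment $z$ with each cluster of size at least $\alpha n$. Within this family I would vary only a single coordinate: for one index $i_0$ whose true label is (say) $1$, I consider the two hypotheses $z_{i_0}=1$ versus $z_{i_0}=2$, where $\theta_1,\theta_2$ are the two closest centers. Because the loss $\ell(\hat z,z)$ counts each misclassified point with weight $1/n$, I would lower-bound the expected loss by $\frac{1}{n}$ times the sum over a linear-in-$n$ collection of such single-index testing errors, so that a per-point error probability of order $\exp(-(1+o(1))\Delta^2/(8\sigma^2))$ survives the $1/n$ factor.

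\medskip

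\textbf{The per-point testing bound.} For a fixed point $y_{i_0}=\theta_{z_{i_0}}+w_{i_0}$ with Gaussian noise $\mathcal N(0,\sigma^2 I_d)$, the optimal test between $\mathcal N(\theta_1,\sigma^2 I_d)$ and $\mathcal N(\theta_2,\sigma^2 I_d)$ is the likelihood-ratio (equivalently, nearest-center) test, and its total error probability is exactly governed by the projection of $w_{i_0}$ onto the line through $\theta_1,\theta_2$. Writing $\|\theta_1-\theta_2\|=\Delta$, the minimal sum of Type-I and Type-II errors equals $2\,\Phi\!\left(-\frac{\Delta}{2\sigma}\right)$, and the standard Gaussian tail bound $\Phi(-t)\ge \frac{1}{\sqrt{2\pi}}\,\frac{t}{1+t^2}e^{-t^2/2}$ gives a lower bound of the form $\exp\!\left(-(1+o(1))\frac{\Delta^2}{8\sigma^2}\right)$ once $\Delta/\sigma\to\infty$. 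This is precisely the claimed exponent. I would then argue that no estimator $\hat z$ can beat the Bayes-optimal test on this two-point sub-problem, using the fact that the other coordinates are distributed identically under the two hypotheses and hence carry no information about $z_{i_0}$. The second statement of the theorem (the constant lower bound when $\Delta/\sigma+\log(k/\alpha)=O(1)$) follows from the same testing bound since then $2\Phi(-\Delta/(2\sigma))$ is bounded below by a constant.

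\medskip

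\textbf{Main obstacle.} The routine part is the Gaussian tail computation; the delicate part is handling the permutation-invariance in the loss (\ref{eq:loss}) and the label-constraint $|\{i:z_i=u\}|\ge\alpha n$ simultaneously. Because $\ell$ takes an infimum over relabelings $\pi\in\mathcal S_k$, I must ensure that flipping a single label $z_{i_0}$ genuinely costs the estimator $1/n$ and cannot be ``explained away'' by a global permutation; this is why I perturb only one point at a time against an otherwise fixed, balanced, well-separated configuration, so that the optimal $\pi$ is forced to be the identity and the reduction to independent single-point tests is clean. The term $\log(k/\alpha)$ in the hypothesis reflects exactly the bookkeeping needed to control the number of candidate points I may flip while respecting the size constraint, and getting the sharp constant $1/8$ (rather than a loose constant) requires using the exact Bayes error $2\Phi(-\Delta/(2\sigma))$ rather than a crude Le Cam two-point bound on the full product measure. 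I expect assembling these per-point bounds into a clean $\exp(-(1+o(1))\Delta^2/(8\sigma^2))$ lower bound on $\E\,\ell(\hat z,z)$, while rigorously justifying the independence across the flipped indices, to be the technical crux.
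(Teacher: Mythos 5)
Your proposal follows essentially the same route as the paper's proof: restrict to a sub-family in which most labels are pinned down so that the permutation infimum in the loss is forced to be the identity, reduce the minimax risk to an average of per-point binary Neyman--Pearson tests between the two closest centers, and lower-bound the resulting Gaussian testing error via the tail bound $1-\Phi(t)\ge \frac{1}{\sqrt{2\pi}}\frac{t}{t^2+1}e^{-t^2/2}$ to extract the exponent $\Delta^2/(8\sigma^2)$. The paper implements the label-pinning step concretely by fixing all but $\lceil \alpha n/(4k)\rceil$ indices in each cluster (following the argument of Gao et al.), which is exactly the device you describe for neutralizing the relabeling infimum, so the two arguments coincide in substance.
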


This lower bound result shows that if the signal-to-noise ratio $\Delta/\sigma$ is at the constant order, the worst case mis-clustering rate is lower bounded by a constant. In other words, a necessary condition for consistently estimating the labels is $\Delta/\sigma \to \infty$. To achieve strong consistency, we need $\Delta/\sigma \gtrsim \sqrt{\log n}$. Theorem \ref{thm:lower_bound} indicates the necessity of separation conditions in estimating the labels. Previous results that use methods of moments to estimate the cluster centers \cite{hsu2013learning, anandkumar2012method}, on the contrary, do not require separation conditions. This reveals the difference between estimating the cluster centers and the labels. 

Together with Corollary \ref{cor:main}, Theorem \ref{thm:lower_bound} gives us the minimax rate of estimating the underlying labels of Gaussian mixtures. 

\section{Applications} \label{sec:app}
In this section, we generalize the results in Section \ref{sec:main} to community detection and crowdsourcing by considering two variants of Lloyd's algorithm. 
 
\subsection{Community Detection} \label{sec:SBM}
We have a network of $k$ communities. There are $n$ nodes in the network and $\{z_i \in [k], i \in [n]\}$ is the community assignment. We observe a symmetric adjacency matrix $A \in \{0,1\}^{n \times n}$ with zero diagonals, which is generated by the following stochastic block model \cite{holland1983stochastic} (SBM thereafter),
$$ A_{ij} \sim \begin{cases}
    \text{Ber}(\frac{a}{n}),& \text{if } z_i=z_j\\
    \text{Ber}(\frac{b}{n}), & \text{otherwise,}
\end{cases} $$
for $1 \le i < j \le n$, where $0 < b < a <n$ and $\{z_i\}$ are unknown parameters. 
The SBM is a special case of the Planted Partition Model \cite{mcsherry2001spectral} in the theoretical computer science literature. As in \cite{gao2015achieving}, we use a single parameter $\beta$ to control the community sizes, namely, there is a $\beta>0$ such that $\frac{n}{\beta k} \le | \{i \in [n], z_i = g\} |$ for all $g \in [k]$.  The goal of community detection is to estimate community structure $z$ using the adjacency matrix $A$. Note that
$\E A_{i \cdot} $ only takes $k$ different values, and the SBM can be viewed as a special case of model  (\ref{eq:model}) with $d=n$ and $\sigma^2=\frac{a}{n}$, ignoring the fact that $A$ is symmetric with zero diagonals.


When $a$ and $b$ are of an order of $n$, the SBM satisfies the sub-Gaussian noise condition (\ref{eq:subgaussian}) and we have
$\Delta^2 \ge \frac{(a-b)^2}{n^2} \times \frac{2n}{\beta k} = \frac{2  (a-b)^2}{\beta k n}$
and $\sigma^2 \le a/n$.  
Initialized by the spectral clustering algorithm, let $\hat{z}$ be the labels returned by running Lloyd's algorithm on the rows of adjacency matrix $A$ with $\lceil 4\log n \rceil $ iterations. By considering the fact that $A$ is symmetric with zero diagonals, we can slightly modify the proof of Corollary \ref{cor:main} to obtain the following result. 
\begin{corollary} \label{thm:SBM}
Assume $\frac{(a-b)^2}{\beta^3 a k^4} \to \infty$ and $\frac{n}{\beta^2 k^3 \log n} \to \infty$ as $n \to \infty$, then
\begin{equation} \label{eq:sbm.rate}
\ell(z, \hat{z}) \le \exp \left( -(1+o(1)) \frac{(a-b)^2}{4a \beta k} \right) 
\end{equation}
with probability tending to $1$.
\end{corollary}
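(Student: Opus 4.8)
The plan is to deduce Corollary \ref{thm:SBM} from Corollary \ref{cor:main} by viewing the SBM as an instance of the general mixture model (\ref{eq:model}), and then to account for the two structural features of $A$ that violate the assumptions of Section \ref{sec:main}: the symmetry $A_{ij}=A_{ji}$ and the zero diagonal. First I would set $d=n$ and take the $h$-th center to be $\theta_h = \E[A_{i\cdot}\mid z_i=h]$, whose $j$-th coordinate equals $a/n$ when $z_j=h$ and $b/n$ otherwise, with noise $w_{i\cdot}=A_{i\cdot}-\theta_{z_i}$ and variance proxy $\sigma^2 = a/n$. For $g\neq h$ a direct computation gives $\|\theta_g-\theta_h\|^2 = (n_g^*+n_h^*)(a-b)^2/n^2 \ge \frac{2(a-b)^2}{\beta k n}$, so $\Delta^2 \ge \frac{2(a-b)^2}{\beta k n}$, and the size constraint $n_h^*\ge n/(\beta k)$ gives $\alpha \ge 1/(\beta k)$. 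Since each row is, conditionally on $z$, a vector of independent centered Bernoulli coordinates, the sub-Gaussian condition (\ref{eq:subgaussian}) holds; for the sharp constant one keeps track of the true per-coordinate variance $\approx a/n$ rather than the universal proxy, which is legitimate because the decision statistics only involve the small dual directions $\theta_g-\theta_h$.

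With these identifications I would translate the two hypotheses of Corollary \ref{cor:main}. Using $d=n$ we have $1+kd/n=1+k\le 2k$, hence
\[ r_k^2 = \frac{\Delta^2}{\sigma^2}\cdot\frac{\alpha}{1+kd/n} \ge \frac{2(a-b)^2}{a\beta k}\cdot\frac{1/(\beta k)}{2k} = \frac{(a-b)^2}{a\beta^2 k^3}. \]
Since $\beta\ge 1$, the assumption $\frac{(a-b)^2}{\beta^3 a k^4}\to\infty$ forces $r_k^2/k\to\infty$, i.e.\ $\sqrt{k}=o(r_k)$, while $n\alpha^2 \ge n/(\beta^2 k^2)$ shows that $\frac{n}{\beta^2 k^3\log n}\to\infty$ implies $k\log n = o(n\alpha^2)$. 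Both conditions of Corollary \ref{cor:main} are thus met, so Lloyd's algorithm initialized by spectral clustering and run for $\lceil 4\log n\rceil$ iterations satisfies $\ell(\hat z,z)\le \exp(-(1+o(1))\Delta^2/(8\sigma^2))$ with probability at least $1-5n^{-1}-2\exp(-\Delta/\sigma)$. Plugging in $\Delta^2/(8\sigma^2)\ge (a-b)^2/(4a\beta k)$ yields the claimed rate (\ref{eq:sbm.rate}); moreover $\Delta/\sigma\to\infty$ (a consequence of the first assumption), so the failure probability tends to $0$.

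The remaining and genuinely new work is to justify that the proof of Corollary \ref{cor:main} --- really the proof of Theorem \ref{thm:kmeans} --- survives the dependence created by symmetry and the deterministic diagonal. The zero diagonal only shifts one coordinate of each $\theta_h$ and of each sample by $O(a/n)$, a perturbation negligible in the $\ell_2$ and inner-product bounds that drive the argument. Symmetry is more delicate: when node $i$ is relabelled in step (\ref{eq:labelupdate}) we compare $\|A_{i\cdot}-\hat\theta_h^{(s)}\|^2$ across $h$, but each center $\hat\theta_h^{(s)}=\frac{1}{n_h^{(s)}}\sum_{i'\in T_h^{(s)}}A_{i'\cdot}$ contains, through $A_{i'i}=A_{ii'}$, exactly the entries of row $i$ that also appear in the statistic being tested, so row $i$ is not independent of the centers. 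I would decouple this with a leave-one-out device: replace $\hat\theta_h^{(s)}$ by the center $\hat\theta_h^{(s,-i)}$ computed after deleting node $i$'s contribution, show $\|\hat\theta_h^{(s)}-\hat\theta_h^{(s,-i)}\|=O(1/n_h^{(s)})$ and that the induced change in the decision statistic is of smaller order than $\Delta^2$, and then apply the independent-noise concentration to the decoupled statistic.

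I expect this decoupling --- re-deriving, under the sparse dependence graph induced by symmetry, the concentration bounds for the random-indicator sums and for the inner products $\langle w_{i\cdot},\theta_g-\theta_h\rangle$ used in Theorem \ref{thm:kmeans} --- to be the main obstacle, since it is the only place where the SBM proof departs structurally from the generic sub-Gaussian argument. In particular, obtaining the \emph{sharp} exponent $(1+o(1))(a-b)^2/(4a\beta k)$ (rather than merely the right order) requires that the Bernoulli large-deviation rate for the misclassification event coincide with $\Delta^2/(8\sigma^2)$ at $\sigma^2=a/n$; this is precisely the rate matched by the lower bound, so the computation must be carried out with the true Bernoulli moments and not discarded into a crude sub-Gaussian tail.
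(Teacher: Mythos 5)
Your proposal is correct and follows essentially the same route the paper intends: the paper omits the proof of this corollary entirely, stating only the identifications $d=n$, $\sigma^2\le a/n$, $\Delta^2\ge 2(a-b)^2/(\beta k n)$, $\alpha\ge 1/(\beta k)$ and that the result follows by ``slightly modifying'' the proof of Corollary~\ref{cor:main} to account for symmetry and the zero diagonal, which is exactly the reduction you carry out (and your translation of the two hypotheses and of the exponent is arithmetically correct). If anything, you are more careful than the paper, since you explicitly flag the two points it glosses over --- the leave-one-out decoupling needed for the symmetric dependence and the fact that the sharp constant requires the true Bernoulli moment computation rather than a generic sub-Gaussian proxy.
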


We omit the proof of Corollary \ref{thm:SBM}. A more interesting case of the SBM is the sparse case when $a=o(n)$ and $b=o(n)$, in which the sub-Gaussian assumption (\ref{eq:subgaussian}) does not hold. By utilizing the community structure, we consider the following variants of Lloyd's algorithm. 

\begin{algorithm}
\caption{CommuLloyd} \vspace{0.05in}
\textbf{Input}: Adjacency matrix $A$. Number of communities $k$.\vspace{0.02in}. Trimming threshold $\tau$.  \\
\textbf{Output}: Estimated labels $\hat{z}_1, \cdots, \hat{z}_n$. \vspace{0.02in} \\
1. Trim the adjacency matrix: 
\begin{itemize}
\item[1a.] calculate the degree of each node $d_i = \sum_{j=1}^{n} A_{ij}, \forall i \in [n]$. \vspace{-0.01in}
\item[1b.] trim the adjacency matrix $A_{ij}^\tau = A_{ij} \mathbb{I}\{d_i \le \tau \}, \forall i \in [n], j \in [n]$.
\end{itemize}
2.  Run spectral clustering algorithm on the trimmed matrix $A_{\tau}$. \\
3. Run following iterations until converge,
\begin{eqnarray} 
\label{eq:SBMcenter}  \hat{B}_{i h} &=& \frac{\sum_{j=1}^{n} A_{ij} \mathbb{I}\{\hat{z}_j=h\}}{\sum_{j=1}^{n} \mathbb{I}\{\hat{z}_j=h\}}, \quad \forall~h \in [k], i \in [n]. \\
\label{eq:SBMlabel}  \hat{z}_i &=&  \argmax_{h \in [k]} \hat{B}_{i h}, \quad \forall~ i \in [n].
\end{eqnarray}
\label{alg:Commulloyd}
\end{algorithm}

As pointed out by \cite{chin2015stochastic}, under the sparse setting of $a \lesssim \log n$, the trimmed adjacency matrix $A^{\tau}$ is a better estimator of $\E A$ than $A$ under the spectral norm. Therefore, we run spectral clustering on the $A^{\tau}$ to get an initial estimator of $z$. Then we update the labels node by node. For the $i$-th node in the network, we estimate its connecting probability to community $j$ (defined as $B_{z_i j}$) based on our previous estimated labels. Then we assign its label to be the $j$ that maximizes $\hat{B}_{z_i j}$, which is expected to be close to $a/n$ or $b/n$, depending on whether $z_i=j$. 
The following theorem gives a theoretical guarantee of the CommuLloyd algorithm. 

\begin{thm} \label{thm:SBMyu}
Assume $n \ge 6k^2 \log n$, $(a-b)^2 \ge C_0 a \beta^2 k^2 \log (\beta k)$  for a sufficiently large constant $C_0$. Let $A_s$ be the mis-clustering rate at iteration $s$ of the CommuLloyd algorithm. Then for any initializer satisfies $G_0 \le 1/4$ with probability $1-\nu$, we have
\begin{equation} \label{eq:SBMthm} 
A_{s+1} \le \exp \left( - \frac{(a-b)^2}{2C\beta a k} \right) + \frac{4}{5} A_s, \quad \forall~ 1 \le s \le 3\log n
\end{equation}
with probability greater than $1-n^{-1}-\nu$.
\end{thm}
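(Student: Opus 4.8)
The plan is to establish the recursion \eqref{eq:SBMthm} as a single high-probability event that holds \emph{uniformly over all labelings in the basin}, so that it applies simultaneously to $\hat z^{(s)}$ for every $1 \le s \le 3\log n$ without a union bound over iterations — this uniformity is exactly what lets the clean probability $1-n^{-1}-\nu$ cover all $s$ at once. Concretely, I would fix the true labels $z$, condition on $A$, and show that on a good event of probability at least $1-n^{-1}$, every labeling $z'$ with $G(z',z)\le 1/4$ is mapped by one center-then-label update \eqref{eq:SBMcenter}--\eqref{eq:SBMlabel} to a labeling $z''$ with $A(z'',z)\le \exp(-(a-b)^2/(2C\beta a k)) + \tfrac45 A(z',z)$. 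Since $G_0\le 1/4$ holds with probability $1-\nu$ and the recursion keeps $G_s$ inside the basin, \eqref{eq:SBMthm} then follows for all $s$ in range. For the one-step bound I would start from the argmax rule: a node $i$ with $z_i=g$ is mislabeled only if $\hat B_{ih}\ge \hat B_{ig}$ for some $h\ne g$, so $A(z'',z)\le \frac1n\sum_i\sum_{h\ne z_i}\mathbb{I}\{\hat B_{ih}\ge \hat B_{iz_i}\}$. I then compare each $\hat B_{ih}=\frac{1}{n_h'}\sum_j A_{ij}\mathbb{I}\{z_j'=h\}$, where $T_h'=\{j:z_j'=h\}$ and $n_h'=|T_h'|$, to its \emph{ideal} version $\bar B_{ih}=\frac1{n_h^*}\sum_{j\in T_h^*}A_{ij}$ built from the true labels; the decision $\hat B_{ig}-\hat B_{ih}$ splits into the ideal gap $\bar B_{ig}-\bar B_{ih}$ plus a perturbation supported entirely on the symmetric differences $T_h'\triangle T_h^*$, whose sizes are controlled by $G(z',z)$.

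The exponential floor term comes from the ideal gap. Since $\bar B_{ig}$ and $\bar B_{ih}$ are built from disjoint blocks of the independent entries of row $A_{i\cdot}$, they are independent, and a Chernoff bound for the difference of the two scaled Binomials gives $\Prob[\bar B_{ih}\ge \bar B_{ig}]\le \exp(-\tfrac{n_{\min}}{n}(\sqrt a-\sqrt b)^2(1-o(1)))$. Using the community-size bound $n_{\min}=\min_h n_h^*\ge n/(\beta k)$ together with $(\sqrt a-\sqrt b)^2\ge (a-b)^2/(4a)$, this is at most $\exp(-(a-b)^2/(4a\beta k))$, which is stronger than the stated floor for $C$ large. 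Summing over the $k-1$ competing communities and over $i$, the expected number of \emph{ideally} mislabeled nodes yields the $\exp(-(a-b)^2/(2C\beta a k))$ term; the SNR hypothesis $(a-b)^2\ge C_0 a\beta^2 k^2\log(\beta k)$ is precisely what forces this exponent to dominate $\log(\beta k)$ so these rare events do not accumulate across the $k$ communities.

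The contraction factor $\tfrac45$ comes from the perturbation. A node is \emph{newly} mislabeled only when $|\hat B_{ih}-\bar B_{ih}|+|\hat B_{ig}-\bar B_{ig}|$ exceeds the ideal gap, which concentrates near $(a-b)/n$. Both perturbations are driven by the edges from $i$ into the current bad set $\bigcup_h (T_h'\triangle T_h^*)$, together with the normalization mismatch $|n_h'-n_h^*|$; the dangerous contribution is from nodes of $i$'s own community wrongly placed into $h$, since those carry edge probability $a/n$. Summing the total perturbation over all $i$ and dividing by the gap — a Markov / double-counting step, where $\sum_{j\in \text{bad set}} d_j$ is the relevant global quantity — bounds the number of flipped nodes by a constant multiple of the $nA(z',z)$ currently mislabeled nodes, and the SNR condition pushes this constant below $\tfrac45$.

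The main obstacle is the dependence between the indicators $\mathbb{I}\{z_j'=h\}$ and the entries $A_{ij}$: because the realized $\hat z^{(s)}$ was itself computed from $A$, I cannot treat the bad-set edges as a fresh independent sample, and a naive union bound over the $3\log n$ iterations would let this dependence compound. This is where the \emph{random-indicator} device enters — I would bound, uniformly over all admissible bad sets $E$ with $|E|$ at most the basin size in each community, the edge counts $\sum_{j\in E}A_{ij}$ via a union bound over the $\binom{n}{|E|}\le \exp(|E|\log n)$ choices of $E$ combined with a Bernstein tail for each fixed set. The delicate point is balancing this entropy against the Bernstein exponent, and the hypothesis $n\ge 6k^2\log n$ is exactly what makes the concentration win; this is what upgrades the per-step estimate into a bound holding simultaneously for every $z'$ in the basin, hence for all iterates $\hat z^{(s)}$ on a single good event.
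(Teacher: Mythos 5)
Your architecture is right at the top level --- an exponential floor from the ideal two-community test plus a contraction term controlled uniformly over all bad sets in the basin, which is indeed how the paper avoids compounding dependence across iterations --- but the step that actually produces the $\tfrac45 A_s$ term does not go through as you describe it. Your contraction comes from a first-moment bound: sum the perturbations $|\hat B_{ih}-\bar B_{ih}|$ over $i$, observe that the total is governed by $\sum_{j\in\mathrm{bad}}d_j\lesssim a\cdot nA_s$, and divide by the per-node gap, which in the units of $\hat B$ is $\asymp (a-b)/n$ with normalization $n_h\ge n/(\beta k)$. That Markov step bounds the number of flipped nodes by roughly $\frac{a\beta k}{a-b}\,nA_s$, so to push the constant below $\tfrac45$ you would need $(a-b)\gtrsim a\beta k$, i.e.\ $(a-b)^2\gtrsim a^2\beta^2k^2$ --- much stronger than the assumed $(a-b)^2\ge C_0 a\beta^2k^2\log(\beta k)$, under which $a$ may far exceed $\log(\beta k)$. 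The paper instead exploits that for each \emph{fixed} bad set $T$ and each node $i$ the event $\{\sum_{j\in T}w_{ij}\ge\epsilon(a-b)|T|/n\}$ already has exponentially small probability $p\le\exp(-c\epsilon^2(a-b)^2/(\beta ak))$ by Bernstein, and then controls the \emph{count} of such nodes by Bennett's inequality: the $t\log(t/(emp))$ tail allows the deviation $t=|T\Delta T^*|/5$ while still beating the set entropy $\exp\bigl(|T\Delta T^*|\log(2en/|T\Delta T^*|)\bigr)$, after a dyadic peeling over $|T\Delta T^*|$ (Lemmas~\ref{lm:SBMkey} and~\ref{lm:SBMkey2}). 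Relatedly, the object you propose to union-bound --- the individual edge counts $\sum_{j\in E}A_{ij}$ uniformly over admissible $E$ --- cannot be controlled: for each $i$ one may choose $E$ inside $i$'s neighborhood, so only the \emph{number of nodes} with a large exceedance is uniformly controllable, not the per-node counts themselves.

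A second gap: you assert that ``the recursion keeps $G_s$ inside the basin,'' but a contraction of the global rate $A_s$ does not control the group-wise rate $G_s$ (all errors could pile into one community of size $n/(\beta k)$). The gap $\frac{n_{gg}}{n_g}-\max_{h\neq g}\frac{n_{gh}}{n_h}$ appearing in \eqref{eq:SBMbasic} is bounded below only through $G_s$, so the paper must first run a separate induction showing $G_s\le 1/3$ for all $s\le 3\log n$, using the group-wise version of the uniform lemma whose additive term is $|T\Delta T^*|/(4\beta k)$ rather than $|T\Delta T^*|/5$; this is precisely where the $k^2$ and the $\log(\beta k)$ in the hypothesis $(a-b)^2\ge C_0a\beta^2k^2\log(\beta k)$ are consumed. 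Your outline never establishes this group-wise invariant, and without it the per-node gap used in both your floor and your contraction is not justified beyond $s=0$.
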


The proof of Theorem \ref{thm:SBMyu} is given in Section \ref{sec:SBMproof} of the appendix. We show the linear convergence of CommuLloyd algorithm given the first stage group-wise mis-clustering rate $G_0 \le 1/4$. In fact, this initialization assumption can be relaxed to $G_0 \le 1/2 - \sqrt{a \log(\beta k)} \beta k/ (a-b)$. To better present our results, we simplify it to $1/4$. By Lemma 7 in \cite{gao2015achieving} and Theorem 3.1 in \cite{awasthi2012improved}, we can guarantee a group-wise initialization error of $1/4$ when $(a-b)^2 \gtrsim a\beta^2 k^3$. Therefore, we obtain an exponentially small error rate under the signal-to-noise ratio condition
\[ (a-b)^2 \gtrsim a \beta^2 k^3 \log \beta. \]

Theorem \ref{thm:SBMyu} provides theoretical justifications of the phenomenon observed in the numerical experiments of \cite{gao2015achieving}. More iterations achieve better mis-clustering rate. The Lloyd iterations enjoy similar theoretical performance as the likelihood based algorithm proposed in \cite{gao2015achieving}. While they require a global initialization error to be $o(1/(k \log k))$,  we require a cluster-wise initialization error to be smaller than $1/4$. Moreover, the CommuLloyd algorithm is computationally much more efficient than Algorithm 1 in \cite{gao2015achieving}, which requires obtaining $n$ different initializers and hence running SVD on the network $n$ times. Theoretically, we relax the assumption in \cite{gao2015achieving} that $a \asymp b$ and $\beta$ is a constant, and we improve the best known signal-to-noise ratio condition\cite{gao2015achieving} by a $\log k$ factor.  When $\beta$ is a constant and $(a-b)^2/(ak^3) \to \infty$ as $n \to \infty$ , we are able to match the minimax rate obtained in \cite{zhang2015minimax}.

\subsection{Crowdsourcing}
Crowdsourcing is an efficient and inexpensive way to collect a large amount of labels for supervised machine learning problems. We refer to \cite{zhang2014spectral, gao2016exact} and references therein for more details. The following Dawid-Skene model is the most popular model considered in the crowdsourcing literature. 

Suppose there are $m$ workers to label $n$ items. Each item belongs to one of the $k$ categories. Denote by $z= (z_1, z_2, \cdots, z_n) \in [k]^{n}$ the true labels of $n$ items and by $X_{ij}$ the label of worker $i$ given to item $j$. Our goal is to estimate the true labels $z$ using $\{X_{ij}\}$. Dawid-Skene model assumes that the workers are independent and that given $z_j = g$, the labels given by worker $i$ are independently drawn from a multinomial distribution with parameter $\pi_{ig*}= \left(\pi_{ig1}, \cdots, \pi_{igk} \right)$, i.e.,
\[ \mathbb{P} \left\{ X_{ij} = h \vert z_j = g \right\} = \pi_{igh}\]
for all $g, h \in [k], i \in [m], j \in [n]$. 
Dawid-Skene model can be seen as a special case of the mixture model (\ref{eq:model}) via the following transformation. For $j \in [n]$, let 
$$y_j = \Big( \mathbb{I}{\{X_{1j}=1\}}, \cdots,  \mathbb{I}{\{X_{1j}=k\}}, \cdots,  \mathbb{I}{\{X_{mj}=1\}}, \cdots,  \mathbb{I}{\{X_{mj}=k\}}  \Big)'.$$
Then given $z_j=g$, we have $\E [y_j \vert z_j=g] = \theta_g$ with $\theta_g =\left( \pi_{1g1}, \pi_{1g2}, \cdots, \pi_{mgk}  \right)$.
By defining $w_{ijh}=\mathbb{1}\{X_{ij}=h\}-\pi_{i z_j h}$, our observations $y_j$ can be decomposed as the following signal-plus-noise model (\ref{eq:model}) with $$w_j = [ w_{1j1}, w_{1j2}, \cdots, w_{mjk}]'.$$ 

Therefore, we consider the following variants of the Lloyd's algorithm in Algorithm \ref{alg:crowdlloyd}. We iteratively estimate workers' accuracy and items' labels. Iterations (\ref{eq:CSworker}) and (\ref{eq:CSlabel}) are actually Lloyd's iteration (\ref{eq:centerupdate}) and (\ref{eq:labelupdate}) under the transformed Dawid-Skene model. Iterations (\ref{eq:CSworker}) and (\ref{eq:CSlabel}) are also closely related to the EM update for Dawid-Skene model \cite{zhang2014spectral}. (\ref{eq:CSworker}) is the same as the M-step. In (\ref{eq:CSlabel}), we first use least squares to approximate the log-likelihood function of multinomial distributions, then we do a hard labeling step instead of soft labeling in the E-step of EM algorithm. 
\begin{algorithm}
\caption{CrowdLloyd} \vspace{0.05in}
\textbf{Input}: $\{X_{ij}, i \in [m], j \in [n]\}$. Number of possible labels $k$. \vspace{0.02in}  \\
\textbf{Output}: Estimated labels $\hat{z}_1, \cdots, \hat{z}_n$. \vspace{0.02in} \\
\text{1. Initialization via Majority Voting}:
\begin{equation} \label{eq:mvoting} 
\hat{z}_j = \argmax_{g \in [k]} \sum_{i=1}^{m} \mathbb{I}\{X_{ij}=g\} \quad \forall~ j \in [n].
\end{equation}
2. Run following iterations until converge.
\begin{eqnarray} 
\label{eq:CSworker} 
\hat{\pi}_{igh} &=& \frac{\sum_{j=1}^{n} \mathbb{I}\{X_{ij}=h, \hat{z}_j=g\}}{\sum_{h=1}^{k} \sum_{j=1}^{n} \mathbb{I}\{X_{ij}= h, \hat{z}_j=g\}} \quad \forall~ i \in [m], g,h \in [k]^2. \\
 \label{eq:CSlabel} 
\hat{z}_j &=&  \argmin_{h \in [k]} \left[ \sum_{i=1}^{m} \sum_{g=1}^{k} \left( \mathbb{I}\{X_{ij}=g\} - \hat{\pi}_{igh} \right)^2 \right] \quad \forall~ j \in [n].
\end{eqnarray}
\label{alg:crowdlloyd}
\end{algorithm}

As suggested by Corollary \ref{cor:main}, we could use spectral clustering to initialize the labels. But the majority voting initializer, as described in the first step of Algorithm \ref{alg:crowdlloyd}, is a more natural and commonly used  initializer of crowdsourcing. By regarding all the workers have the same accuracy, it simply estimates the labels by aggregating the results of every worker with equal weights. The majority voting initializer is computationally more efficient than spectral methods. We have the following result of the majority voting estimator. 

\begin{thm} \label{thm:CSmvoting}
Under the Dawid-Skene model, the majority voting initializer satisfies
\begin{equation}
\frac{1}{n} \sum_{j=1}^{n} \mathbb{I}\left\{ \hat{z}_j \neq z_j \right\} \le \exp \left( - \frac{mV(\pi)}{2} \right)
\end{equation}
with probability greater than $1-\exp\left( - m V(\pi)/2 + \log k \right)$, where 
\[  V(\pi) = \min_{g \neq h} \frac{1}{m} \left(\sqrt{\sum_{i=1}^{m}\pi_{igg}} - \sqrt{\sum_{i=1}^{m}\pi_{igh}}\right)^2. \]
\end{thm}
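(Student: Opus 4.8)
The plan is to reduce the global error bound to a per-item tail estimate and then aggregate. Fix an item $j$ with true label $z_j = g$, and for each category $h \in [k]$ write $N_h = \sum_{i=1}^m \mathbb{I}\{X_{ij} = h\}$ for its vote count, so that majority voting sets $\hat z_j = \argmax_{h} N_h$. Since $\{\hat z_j \neq g\}$ forces $N_h \ge N_g$ for at least one $h \neq g$, a union bound gives $\Prob(\hat z_j \neq g) \le \sum_{h \neq g} \Prob(N_h \ge N_g)$, and it suffices to bound each term.

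The core of the argument is a Chernoff bound on $\Prob(N_h \ge N_g) = \Prob\big(\sum_{i} Z_i \ge 0\big)$, where $Z_i = \mathbb{I}\{X_{ij}=h\} - \mathbb{I}\{X_{ij}=g\}$. Conditioned on $z_j = g$ the $Z_i$ are independent and take the value $+1$ with probability $\pi_{igh}$, $-1$ with probability $\pi_{igg}$, and $0$ otherwise. For $t \ge 0$ the bound $1 + x \le e^x$ yields $\E e^{tZ_i} \le \exp\big(\pi_{igh}(e^t - 1) + \pi_{igg}(e^{-t} - 1)\big)$, so with $P = \sum_i \pi_{igg}$ and $Q = \sum_i \pi_{igh}$ I get $\Prob(N_h \ge N_g) \le \exp\big((e^t-1)Q + (e^{-t}-1)P\big)$. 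Minimizing the exponent over $t$ --- the minimizer $e^{t} = \sqrt{P/Q}$ is nonnegative exactly in the diagonal-dominant regime $P \ge Q$ that makes the bound meaningful --- collapses it to $-(\sqrt P - \sqrt Q)^2$. Since $(\sqrt{\sum_i \pi_{igg}} - \sqrt{\sum_i \pi_{igh}})^2 \ge m V(\pi)$ by definition, summing over the at most $k$ competing categories gives $\Prob(\hat z_j \neq z_j) \le \exp(-mV(\pi) + \log k)$, uniformly over $j$.

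It remains to turn this expected-error control into a high-probability guarantee. Taking the mean over items, $\E\big[\tfrac1n \sum_{j} \mathbb{I}\{\hat z_j \neq z_j\}\big] = \tfrac1n \sum_j \Prob(\hat z_j \neq z_j) \le \exp(-mV(\pi) + \log k)$. Applying Markov's inequality at the threshold $\exp(-mV(\pi)/2)$ then yields $\Prob\big(\tfrac1n\sum_j \mathbb{I}\{\hat z_j \neq z_j\} \ge \exp(-mV(\pi)/2)\big) \le \exp(-mV(\pi)/2 + \log k)$, which is precisely the stated conclusion.

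The only genuinely delicate point is the Chernoff step: one must recognize that optimizing the moment-generating-function bound produces the Bhattacharyya/Hellinger-type exponent $(\sqrt P - \sqrt Q)^2$ that matches the definition of $V(\pi)$, and that the optimal tilt $t^\ast = \tfrac12\log(P/Q)$ is admissible precisely when the workers are informative on the pair $(g,h)$. The surrounding steps --- the union bound over categories and the Markov passage from expected to high-probability error --- are routine. A minor care point is tie-breaking in the $\argmax$: using the non-strict event $N_h \ge N_g$ in the union bound keeps the argument valid under any tie-breaking convention.
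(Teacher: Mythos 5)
Your proposal is correct and follows essentially the same route as the paper's proof: a union bound over competing categories, a Chernoff bound on $\sum_i\big(\mathbb{I}\{X_{ij}=h\}-\mathbb{I}\{X_{ij}=g\}\big)$ using $1+x\le e^x$ with the optimal tilt $e^{t}=\sqrt{P/Q}$ to produce the exponent $-(\sqrt P-\sqrt Q)^2$, and then Markov's inequality on the average mis-labeling indicator. Your final Markov step with threshold $\exp(-mV(\pi)/2)$ in fact matches the stated conclusion more cleanly than the paper's closing line (which picks $t=1/(4k)$), and your remark on tie-breaking via the non-strict inequality is a sensible extra care point.
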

The proof of Theorem \ref{thm:CSmvoting} is given in Section \ref{sec:CSproof}. To our knowledge, this theorem is the first theoretical characterization of the Majority Voting estimator under the general Dawid-Skene model. $mV(\pi)$ measures the collective accuracy of $m$ workers on estimating one of the labels. If there are two labels $g$ and $h$ that are confusing for most of the workers, the value of $V(\pi)$ is small and majority voting estimator may not have good performance.  Under the special case of one-coin model, where $\pi_{igg} = p$ and $\pi_{igh} = \frac{1-p}{k-1}$ for $g \neq h$, we have $$mV(\pi) = m(\sqrt{p} - \sqrt{(1-p)/(k-1)})^2 \gtrsim \log m$$ as long as $p \ge \frac{1}{k}+\sqrt{\frac{\log m}{m}}$. That is, for Majority Voting estimator to be consistent, the accuracy of workers giving true labels only needs to be $\sqrt{\log m/m}$ better than random guess.

With Theorem \ref{thm:CSmvoting} and Theorem \ref{thm:kmeans}, we are able to to obtain an error rate upper bound of Algorithm \ref{alg:crowdlloyd}. It can be proved that the sub-Gaussian parameter of $w_i$ is 2 (see the proof of Corollary \ref{thm:crowdsourcing}
for more details). Define $\Delta^2 = \min_{g \neq h} \sum_{i=1}^{m} \sum_{u=1}^{k} (\pi_{igu}-\pi_{ihu})^2.$ Then we have the following corollary of Theorem \ref{thm:kmeans}.
\begin{corollary} \label{thm:crowdsourcing}
Assume $mV(\pi) \ge C \log k$, $\alpha \Delta^2 \ge  C k $ and $n\alpha \ge k \Delta^2 \log n$ for a sufficiently large constant $C$. Let $\hat{z}=(\hat{z}_1, \cdots, \hat{z}_n)$ be the estimated labels returned by running Algorithm \ref{alg:crowdlloyd}. We have
\begin{equation} \label{eq:crowd}
\frac{1}{n} \sum_{i=1}^{n} \mathbb{I} \left\{ \hat{z}_i \neq z_i  \right\} \le \exp \left( - \frac{\Delta^2}{64} \right)
\end{equation}
with probability greater than $1- \exp(-mV(\pi)/4) - 4/n-2\exp \left( - \Delta/2 \right)$.
\end{corollary}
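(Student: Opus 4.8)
The plan is to cast Algorithm~\ref{alg:crowdlloyd} as Lloyd's algorithm for a sub-Gaussian mixture and then invoke Theorem~\ref{thm:kmeans}, with Theorem~\ref{thm:CSmvoting} certifying that the majority-voting start lies in the basin (\ref{eq:kcond}). First I would record the reduction preceding the statement: taking $y_j$ to be the stacked indicator vector and $\theta_g=(\pi_{1g1},\dots,\pi_{mgk})$ writes the Dawid--Skene model as (\ref{eq:model}) with $d=mk$, labels $z_j$, and noise $w_{ijh}=\mathbb{I}\{X_{ij}=h\}-\pi_{iz_jh}$. I would then verify the sub-Gaussian condition (\ref{eq:subgaussian}): for a fixed item the block of coordinates from worker $i$ is a centered, bounded indicator vector and the blocks are independent across $i$, so a Hoeffding-type bound on each block combined with independence gives a sub-Gaussian parameter that can be taken to be $\sigma=2$. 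Feeding $\sigma=2$ into the conclusion $A_s\le\exp(-\Delta^2/(16\sigma^2))$ of Theorem~\ref{thm:kmeans} is precisely what yields the target $\exp(-\Delta^2/64)$, where $\Delta^2=\min_{g\ne h}\sum_{i=1}^m\sum_{u=1}^k(\pi_{igu}-\pi_{ihu})^2$.

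Next I would check that the three hypotheses imply the two running assumptions of Theorem~\ref{thm:kmeans}. The size condition is immediate: multiplying $n\alpha\ge k\Delta^2\log n$ by $\alpha$ and using $\alpha\Delta^2\ge Ck$ gives $n\alpha^2\ge Ck^2\log n\ge Ck\log n$. For the signal-to-noise condition $r_k\ge C\sqrt k$ with $r_k=\frac{\Delta}{2}\sqrt{\alpha/(1+kd/n)}$ I would use $\alpha\Delta^2\ge Ck$, but here care is needed because the ambient dimension $d=mk$ overstates the noise energy, the per-item covariance having trace only $O(m)$; I would therefore track this effective dimension rather than bounding $\E\|w_j\|^2$ by $\sigma^2 d$, so that $1+kd/n$ does not spuriously shrink $r_k$.

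The main work is to show that the majority-voting initializer of (\ref{eq:mvoting}) satisfies (\ref{eq:kcond}). Theorem~\ref{thm:CSmvoting} bounds only the \emph{global} error $A_0\le\exp(-mV(\pi)/2)$, whereas (\ref{eq:kcond}) is stated through the \emph{cluster-wise} quantity $G_0$, whose denominators are the cluster sizes $n_h^{(0)}$ and $n_h^*$. I would bound the true-negative part of $G_0$ by $A_0/\alpha$ using $n_h^*\ge\alpha n$ and $\sum_{g\ne h}n_{hg}^{(0)}\le nA_0$; for the false-positive part I would first lower-bound the estimated sizes, $n_h^{(0)}\ge\alpha n-nA_0\ge\alpha n/2$ once $A_0\le\alpha/2$, to get $G_0\le 2A_0/\alpha$. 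Since $mV(\pi)\ge C\log k$ forces $A_0\le k^{-C/2}$, this ratio falls below the threshold $(\tfrac12-6/\sqrt{r_k})/\lambda$ for $C$ large, so (\ref{eq:kcond}) holds.

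With the reduction, the hypothesis translation, and basin membership established, I would apply Theorem~\ref{thm:kmeans} to $\{y_j\}$ with $\sigma=2$ and run $\lceil4\log n\rceil$ iterations, giving $\frac1n\sum_j\mathbb{I}\{\hat z_j\ne z_j\}\le\exp(-\Delta^2/64)$, which is (\ref{eq:crowd}). The probability is assembled by a union bound: majority voting fails with probability at most $\nu=\exp(-mV(\pi)/2+\log k)$, and Theorem~\ref{thm:kmeans} contributes $4/n+2\exp(-\Delta/\sigma)=4/n+2\exp(-\Delta/2)$; using $mV(\pi)\ge C\log k$ with $C\ge4$ to write $\nu\le\exp(-mV(\pi)/4)$ collapses these to the stated bound. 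The step I expect to be the main obstacle is exactly the global-to-cluster-wise passage: because $G_0$ has the possibly tiny estimated sizes $n_h^{(0)}$ in its denominator, the argument must simultaneously control every cluster from below and ensure the exponentially small $A_0$ overcomes the $1/\alpha$ factor this conversion costs; reconciling the effective noise dimension with $r_k\ge C\sqrt k$ is the secondary delicate point.
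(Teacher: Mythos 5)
Your proposal follows essentially the same route as the paper: the paper's written proof of this corollary consists only of the sub-Gaussian parameter computation ($\sigma\le 2$, so that $\exp(-\Delta^2/(16\sigma^2))=\exp(-\Delta^2/64)$), with everything else left implicit as ``combine Theorem~\ref{thm:CSmvoting} with Theorem~\ref{thm:kmeans}.'' The extra details you supply --- the translation of the hypotheses into $n\alpha^2\ge Ck\log n$ and $r_k\ge C\sqrt{k}$, the global-to-cluster-wise passage $G_0\le 2A_0/\alpha$, and the effective-dimension caveat for $d=mk$ in $r_k$ --- address real steps the paper glosses over, and your handling of them is consistent with the intended argument.
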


The proof of Theorem \ref{thm:crowdsourcing} is given in Section \ref{sec:CSproof}. We achieve an exponentially small error rate for estimating the labels under Dawid-Skene model. By Theorem 4.2 in \cite{gao2016exact}, we can achieve the minimax optimal rate by an additional likelihood based Neyman-Pearson testing step followed by Algorithm \ref{alg:crowdlloyd}. Previous provable results \cite{zhang2014spectral, gao2016exact} assume $\pi_{igh}\ge \rho > 0$ for all $(i,g,h) \in [m] \times [k]^2$, which is quite restrictive since a very good worker could have $\pi_{igh}=0$ for some $h \neq g$. By considering least square update (\ref{eq:CSlabel}), we successfully get rid of this assumption. In contrast to algorithms that using spectral methods as initializer \cite{zhang2014spectral}, Theorem \ref{thm:crowdsourcing} does not need an eigenvalue condition on the confusion matrix. 

\section{Numerical Analysis} \label{sec:data}
\subsection{Gaussian Mixture Model}
\subsubsection{Simulation}
In this section, we provide simulation results that are designed to test our theoretical findings in Theorem \ref{thm:kmeans}. As predicted by  Theorem \ref{thm:kmeans}, there are three stages of the convergence of Lloyd's algorithm. Given an initial estimator of the labels with error smaller than $1/(2\lambda)$, Lloyd's algorithm escapes from a small neighborhood of $1/(2\lambda)$ in the first iteration. Then it has a geometrically decay mis-clustering rate during the second stage. Finally, it converges to the statistical precision of the problem and achieves an exponentially small error. In all the results reported below, we plot the logarithm of the mis-clustering rate versus the iteration count. Each curve plotted is an average of $10$ independent trails. 

\begin{figure}[h]
\centering
\begin{subfigure}{.5\textwidth}
\centering
\includegraphics[width=1.0\linewidth]{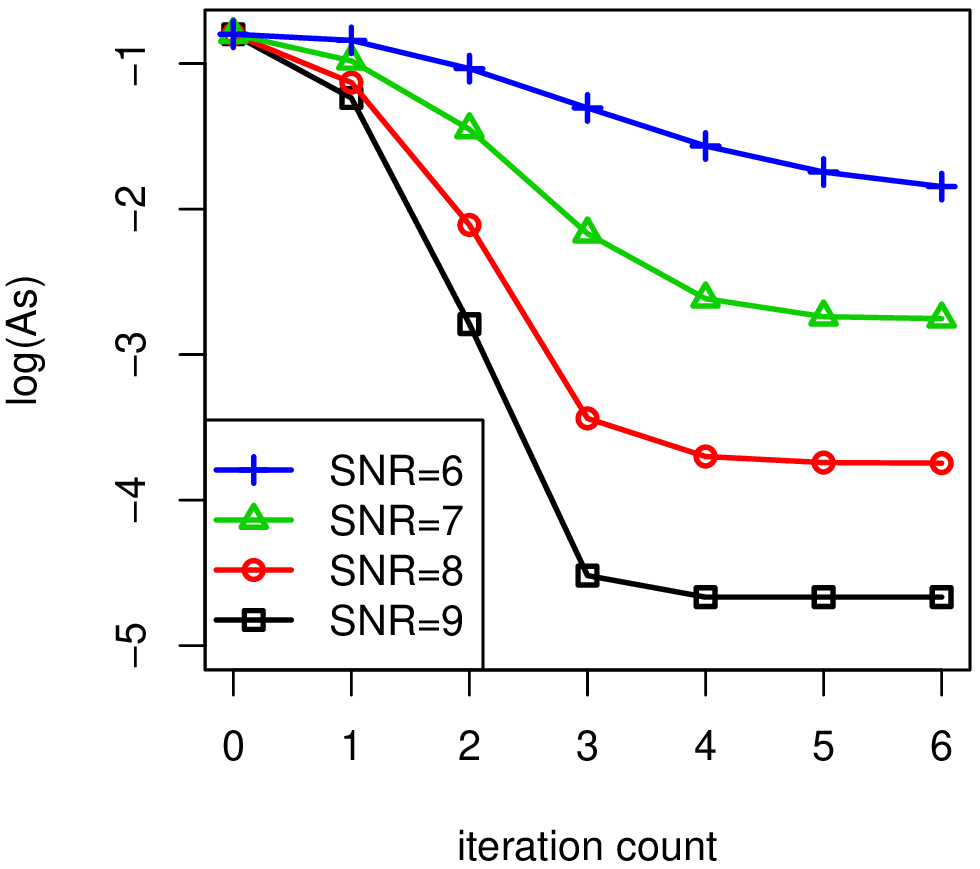}
\caption{}
\label{fig:gmmsnr}
\end{subfigure}%
\begin{subfigure}{.5\textwidth}
\centering
\includegraphics[width=1.0\linewidth]{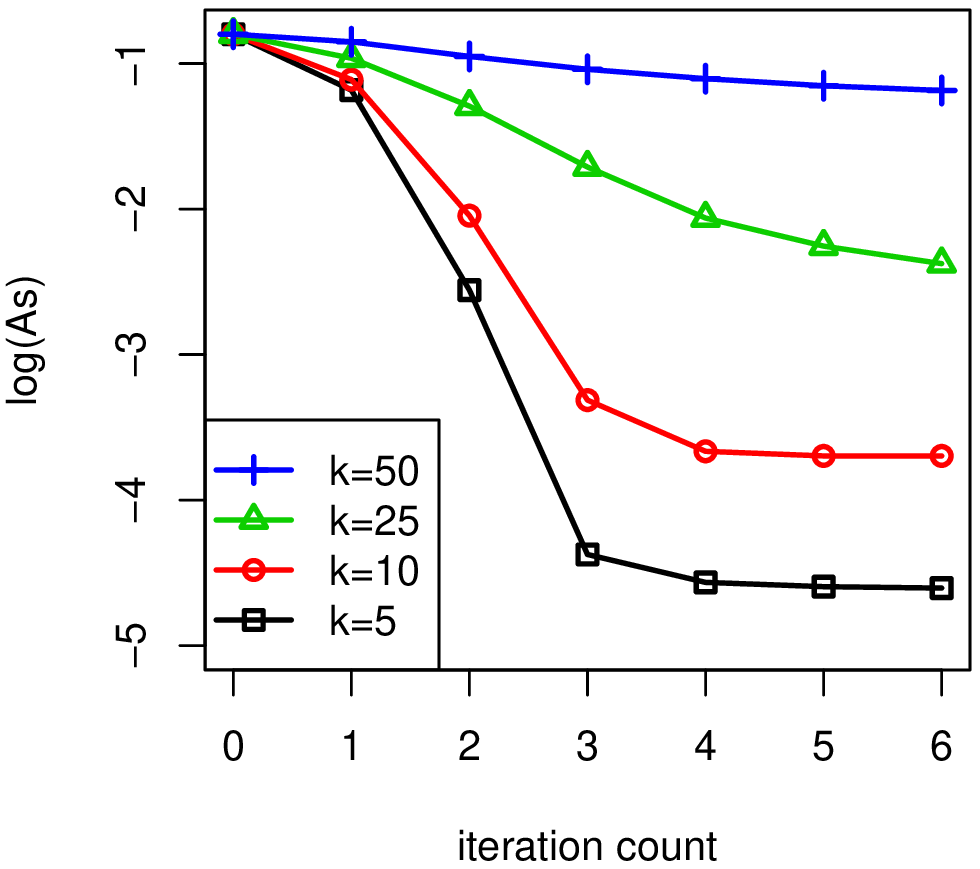}
\caption{}
\label{fig:gmmk}
\end{subfigure}%
\caption{Plots of the log mis-clustering rate $A_s$ versus iteration count.}
\label{figure:gmm}
\end{figure}

In the first simulation, we independently generate $n=1000$ samples from a mixture of $k=10$ spherical Gaussians. Each cluster has $100$ data points. The centers of those $10$ gaussians are orthogonal unit vectors in $\mathbb{R}^d$ with $d=100$. The four curves in Figure \ref{fig:gmmsnr} corresponds to four choices of the standard deviation $\sigma = 2/SNR$ with  $SNR \in \{6, 7, 8, 9\}$.  We use the same initializer with a group-wise mis-clustering rate of $0.454$ for all the experiments. As shown by plots in Figure \ref{fig:gmmsnr}, there is a linear convergence of the log mis-clustering rate at the first 2 iterations and then the convergence slows down. After four iterations, the log mis-clustering rate plateaus roughly at the level of $-\frac{1}{16} SNR^2$. Moreover, we observe in Figure \ref{fig:gmmsnr} that the convergence rate increases as $SNR$ increases, as expected from Theorem \ref{thm:kmeans}.

The second experiment investigates the role of $k$ in the convergence of Lloyd's algorithm. We first randomly generate $k \in \{5,10,25,50\}$ centers that are orthogonal unit vectors in $\mathbb{R}^{100}$. Then we add independent gaussian noise $\mathcal{N}(0, 0.25^2)$ to each coordinate of those centers to get $1000/k$ samples for each cluster. In total, we have $n=1000$ independent samples from a mixture of spherical Gaussian distributions. We set the same initializer as in the first simulation to make a group-wise mis-clustering rate of $0.37$. As observed in Figure \ref{fig:gmmk}, the larger the $k$, the slower Lloyd's algorithm converges. This agrees with the phenomena predicted by Theorem \ref{thm:kmeans}. It is interesting to observe that when $k$ is large, Lloyd's algorithm does not converge to the optimal mis-clustering rate in Figure \ref{fig:gmmk}. When the value of $k$ is doubled, the convergence rate of $\log A_s$ slows down roughly by half, which matches the $k$-term in (\ref{eq:kconst}).

\subsubsection{Gene Microarray Data}
To demonstrate the effectiveness of Lloyd's algorithm on high dimensional clustering problem, we use six gene microarray data sets that were preprocessed and analyzed in \cite{dettling2004bagboosting,gordon2002translation}. In each data set, we are given measurements of the expression levels of $d$ genes for $n$ people.  There are several underlying classes (normal or diseased) of those people. Our goal is to cluster them based on their gene expression levels, with the number of groups $k$ given. One common feature of those six data sets is that the dimensionality $d$ is much larger than the number of samples $n$. As shown in Table \ref{table:gene}, while $n$ is usually smaller than two hundreds, the number of genes can be as large as ten thousands. 

We compare three different methods on these six data sets. RandLloyd is Lloyd's algorithms initialized by randomly drawing $k$ rows of the original data as $k$ centers. SpecLloyd runs RandLloyd on the first $k$ eigenvectors of the data, followed by Lloyd's iterations on the original data. IF-HCT-PCA, proposed in \cite{jin2014important}, does a variable selection step first and then applies spectral clustering on selected variables. 
All three methods need random initializations at the $k$-means step and thus have algorithmic randomness. We randomly initialize centers for 30 times and use the initializer that give us the minimum $k$-means objective value. 

\begin{table} 
\begin{center}
\begin{tabular}{ | c | c c c| c  c c |}
\hline
Date set & k & n & d &IF-HCT-PCA  & SpecLloyd & RandLloyd  \\ \hline
Brain & 5 & 42 & 5597 & .262   & \textbf{.119} & .262(.04) \\ 
Leukemia & 2 & 72 & 3571  & .069   & \textbf{.028} & .\textbf{028} \\ 
Lung(1) & 2 & 181 & 12533 & \textbf{.033}   & .083 & .098 \\ 
Lymphoma & 3 & 62 &4026  & .069   & .177 & \textbf{.016} \\ 
Prostate& 2 & 102 & 6033 & \textbf{.382}   & .422 & .422 \\ 
SRBCT & 4 & 63 & 2308 & \textbf{.444}   & .492 & .492 \\ \hline
\end{tabular}
\end{center}
\caption{Clustering error rate of three clustering methods on six gene microarray datasets.}  \label{table:gene}
\end{table}

In Table \ref{table:gene}, we report the average mis-clustering rate rates over 30 independent experiments. We also report the standard deviation if it is larger than .01. As we can see, RandLloyd is comparable to other two methods on these gene microarray data. It achieves the best clustering error rate when the clustering task is relatively easy (signal-to-noise ratio is strong), for example, Leukemia and Lymphoma data sets. Note that both IF-HCT-PCA and SpecLloyd have a dimension reduction step. This agrees with the phenomena suggested by Theorem \ref{thm:kmeans} that Lloyd's algorithm also works on the high dimensional setting when the signal-to-noise ratio is strong enough.

\subsection{Community Detection}
In this section, we report the results of empirical studies to compare the CommuLloyd algorithm proposed in Section \ref{sec:SBM} with other methods. In particular, we compare to spectral clustering (Spec) and spectral clustering with one step refinement studied in \cite{gao2015achieving}. Since the algorithm in \cite{gao2015achieving} is motivated by the maximum likelihood estimator, we refer it to as Spec+MLE.
\subsubsection{Simulation}
Using the same settings as in \cite{gao2015achieving}, we generate data from SBM under three different scenarios: (1) dense network with equal size communities; (2) sparse network with equal size communities; and (3) dense network with unequal size communities. The detailed set-up of three simulations is described in Table \ref{table:SBMsimu}. We conduct each experiment with 10 independent repetitions and report the average results of each algorithm in Figure \ref{figure:SBM}.

For all the experiments, we use spectral clustering methods in \cite{lei2013consistency} as our initializer and plot the logarithm mis-clustering rate versus the Lloyd iteration or MLE iteration counts. Iteration 0 represents the spectral clustering initializer. As shown by Figure \ref{figure:SBM}, CommuLloyd significantly outperforms spectral clustering and spectral clustering with one step MLE refinement under all three settings. Surprisingly, it is even better than multiple steps MLE refinements. In panel (a), MLE refinements converge to a sub-optimal mis-clustering rate, and in panel (b), it does not improve the spectral clustering initializer. This phenomenon may be due to the stronger initialization condition for MLE refinements in \cite{gao2015achieving}, while in Theorem \ref{thm:SBMyu}, CommuLloyd needs only a constant-error initializer to converge to the optimal mis-clustering rate rate. The convergence rate of CommuLloyd for the these two cases, however, is faster than the linear convergence predicted by Theorem \ref{thm:SBMyu}. As for the unbalance case in panel (c), CommuLloyd and MLE has similar performance and they both exhibit a linear convergence rate. 

\begin{table} 
\begin{center}
\begin{tabular}{ |c | c c c c c |}
\hline
Setting  & n & k & Within & Between & Community Size \\ \hline
Balanced Case & 2000 & 10  & 0.20 & 0.11 & equal \\ \hline
Sparse Case & 2000 & 4 & 0.019 & 0.005 & equal \\ \hline
Unbalanced Case & 1000 & 4 & 0.35 & 0.22 & 100,200,300,400 \\ \hline
\end{tabular}
\end{center}
\caption{The settings of three simulations on community detection. Within is the within-community connecting probability $a/n$. Between is the between-community connecting probability $b/n$. Equal means the communities are of equal size. }
\label{table:SBMsimu}
\end{table}

\begin{figure}[h]
\centering
\begin{subfigure}{.33\textwidth}
\centering
\includegraphics[width=1.0\linewidth]{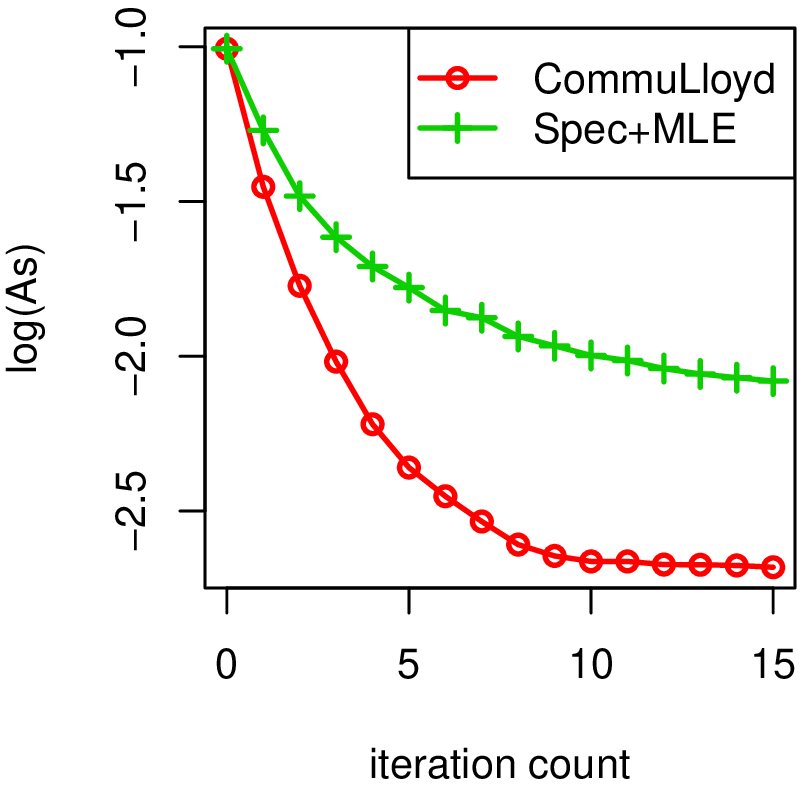}
\caption{}
\label{fig:SMBba}
\end{subfigure}%
\begin{subfigure}{.33\textwidth}
\centering
\includegraphics[width=1.0\linewidth]{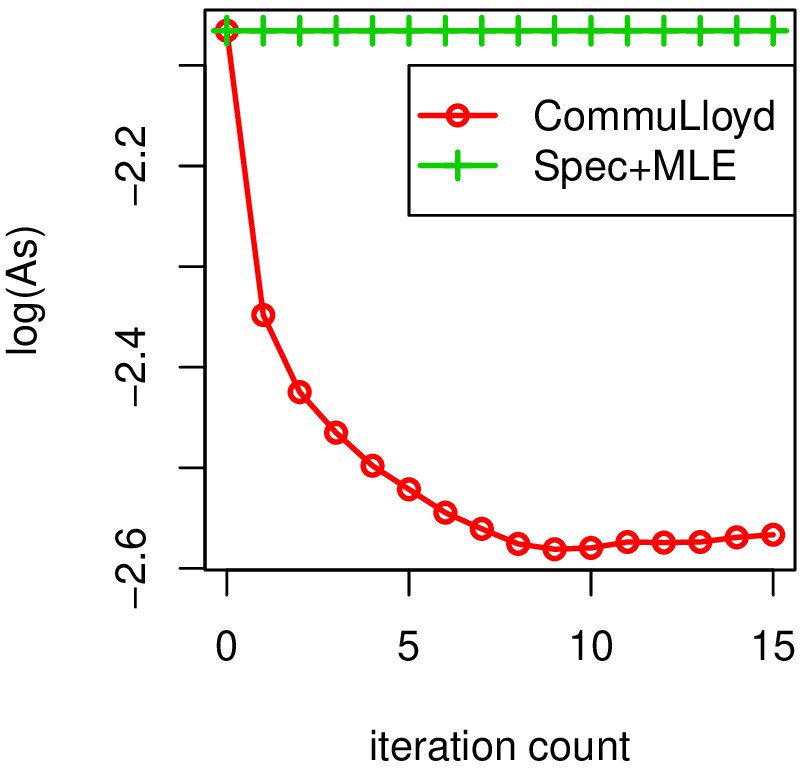}
\caption{}
\label{fig:SBMsparse}
\end{subfigure}%
\begin{subfigure}{.33\textwidth}
\centering
\includegraphics[width=1.0\linewidth]{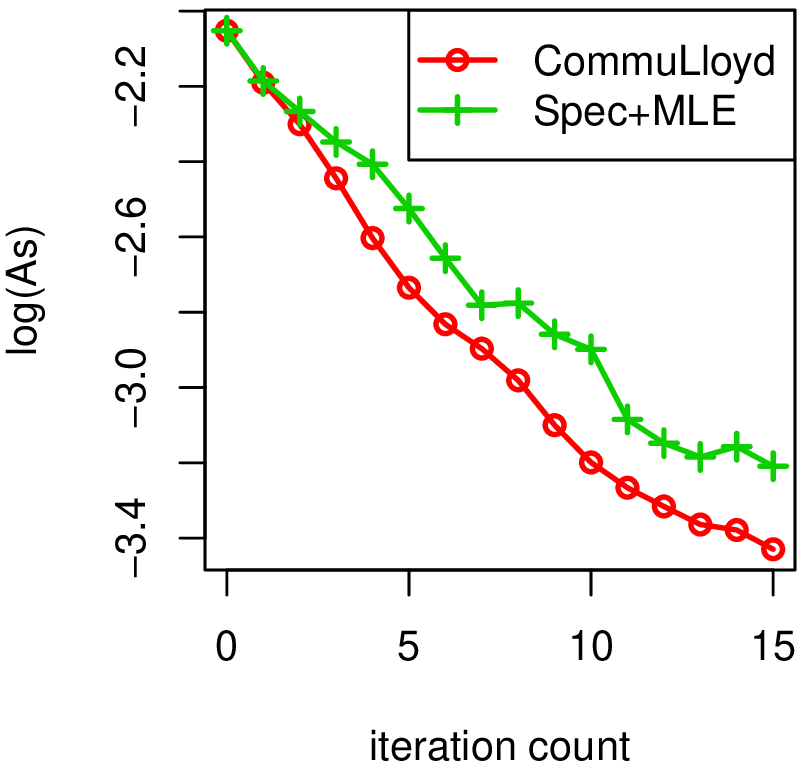}
\caption{}
\label{fig:SBMimba}
\end{subfigure}%
\caption{Plots of log mis-clustering rate $A_s$ versus the iteration.}
\label{figure:SBM}
\end{figure}


\subsubsection{Political Blog Data}
We now examine the performance of Algorithm 2 on a political blog dataset \cite{adamic2005political}. Each node in this data represents a web blog collected right after the 2004 US presidential election. Two nodes are connected if one contains a hyperlink to the other, and we ignore the direction of the hyperlink. After pre-processing \cite{karrer2011stochastic}, there are 1222 nodes and 16714 edges. Depending on whether liberal or conservative, the network naturally has two communities. We use the label given in \cite{adamic2005political} as our ground truth.

Table \ref{table:politic} reports the number of mis-labeled nodes of our algorithm on political blog dataset. Started from spectral clustering with an error rate of 437/1222, CommuLloyd achieves 56/1222 error rate after three iterations. After that, it fluctuates between 55 and 56 mis-labeled nodes. Our algorithm slightly outperforms the state-of-the-art algorithm SCORE \cite{jin2015fast}, which reports an error rate of 58/1222.

\begin{table} 
\begin{center}
\begin{tabular}{ c | c c c c c c c}
\hline
Methods & Spec & Spec+MLE(1st) & Spec+MLE(10th) & SCORE & CommuLloyd \\ \hline
No.mis-clustered & 437  & 132  & 61 & 58  & \textbf{56}\\ \hline
\end{tabular}
\end{center}
\caption{Number of mis-labeled blogs for different community detection methods. Here Spec is the spectral clustering method described in \cite{lei2013consistency}. We refer the one-step MLE refinement of Spec as Spec+MLE(1st), and 10 steps MLE refinements as Spec+MLE(10th). SCORE is a community detection algorithm for Degree-corrected SBM proposed in \cite{jin2015fast}. }  
\label{table:politic}
\end{table}

\subsection{Crowdsourcing} In this section, we compare CrowdLloyd algorithm with three other crowdsourcing methods: (1) Opt-D\&S, which is Spectral methods with one step EM update proposed in \cite{zhang2014spectral}, (2) MV, the majority voting estimator and (3) MV+EM, which is majority voting estimator followed by EM updates. Among those four algorithms, Opt-D\&S, MV and CrowdLloyd have provable error rates.

When there are ties, the majority voting estimator is not clearly defined. To avoid algorithmic randomness, we return the first label index when there are ties for CrowdLloyd and MV estimator. Since EM algorithm takes soft labels as input, we use weights rather than hard label of majority voting estimator in MV+EM. In practice, there are usually missing data. We denote by $0$ as the category of missing data and modify the label update step in CrowdLloyd as
\begin{equation} \label{eq:CSlabel2} 
\zsnext_j =  \argmin_{h \in [k]} \left[ \sum_{i, X_{ij} \neq 0} \sum_{g=1}^{k} \left( \mathbb{I}\{X_{ij}=g\} - \hat{\pi}_{igh}^{(s)}\right)^2. \right]
\end{equation}
\subsubsection{Simulation}
We follow the same setting as in the experiments of \cite{zhang2014spectral}, where there are $m=100$ workers and $n=1000$ items. Each item belongs to one of the $k=2$ categories. For worker $i$, the diagonal entries of his or her confusion matrix $\pi_{igg}$ are independently and uniformly drawn from the interval $[0.3, 0.9]$. We set $\pi_{igh} = 1- \pi_{igg}$ for all $i \in [n]$ and $(h,g) \in \{(1,2), (2,1)\}$. To simulate missing entries, we assume each entry $X_{ij}$ is observed independently with probability $p \in \{0.2,0.5,1\}$. 

We replicate experiments independently for 10 times and report the average results in Table \ref{table:CSsimu}. Among three provable algorithms, CrowdLloyd is consistently better than Opt D\&S (1st iteration) and MV under different missing levels. CrowdLloyd, MV+EM and Opt D\&S (10th iteration) have almost identical performance. As expected, CrowdLloyd is slightly worse because it does not use likelihood function to update the confusion matrices. As discussed before, an additional Neyman-Pearson testing step followed by CrowdLloyd algorithm would give the same output as EM updates. 

\begin{table} 
\begin{center}
\begin{tabular}{ | c | c | c | c |}
\hline
Methods & $p =1.0$ & $p=0.5$ & $p=0.2$ \\ \hline
Opt D\&S (1st iteration) & 0.06 &1.32 & 10.32  \\
Opt D\&S (10th iteration) & 0.07 & 1.10 & 8.22 \\
MV & 2.14 & 8.56 &18.97 \\
MV + EM & 0.07 &1.10 &8.17 \\
CrowdLloyd & 0.07 &1.14 &8.19\\
\hline
\end{tabular}
\end{center}
\caption{mis-clustering rate rate (\%) of different crowdsourcing algorithms on simulated data.}  
\label{table:CSsimu}
\end{table}

\subsubsection{Real Data} We now compare CrowdLloyd algorithm to other three algorithms on five real crowdsourcing data sets: (1) bird dataset is a binary task of labeling two bird species, (2)  recognizing textual entailment (RTE) dataset contains a set of sentence pairs and the task is to label whether the second sentence can be inferred from the first one, (3) TREC dataset is a binary task of assessing the quality of information retrieval in TREC 2011 crowdsourcing track, (4) dog dataset is a subset of Stanford dog dataset that requires labeling one of the four dog breeds, and (5) web dataset is a set of query-URL pairs for workers to label on a relevance scale from 1 to 5. 

The mis-clustering rates of four methods are summarized in Table \ref{table:CS}. CrowdLloyd slightly outperforms the other three on four of those five data sets. One possible explanation is that the least squares update $(\ref{eq:CSlabel})$ works for general sub-Gaussian distributions and thus is potentially more robust than the likelihood based EM update. 

\begin{table} 
\begin{center}
\begin{tabular}{ |c | c | c c c c |}
\hline
\#  & Date set &Opt D\&S & MV & MV+EM & CrowdLloyd  \\ \hline
1 & Bird & \textbf{10.09}   & 24.07  & 11.11 & 12.04 \\ 
2 & RTE  & 7.12  & 8.13 & 7.34 & \textbf{6.88} \\ 
3 & TREC  & 29.80  & 34.86 & 30.02 & \textbf{29.19} \\
4 & Dog & 16.89  & 19.58 & 16.66 & \textbf{15.99} \\
5 & Web & 15.86  & 26.93 & 15.74 & \textbf{14.25} \\ 
\hline
\end{tabular}
\end{center}
\caption{mis-clustering rate rate (\%) of different crowdsourcing algorithms on five real data sets}  \label{table:CS}
\end{table}

\section{Discussion} \label{sec:diss}
\subsection{Random Initialization}
In this section, we explore the condition under which random initialization converges to the global optimum.  Here we consider the symmetric two-mixture model (\ref{eq:modeltwo}). Note that Theorem \ref{thm:2main} holds for any initializer (could be data dependent) satisfying condition (\ref{eq:2cond}).  If we have a data independent initializer, condition (\ref{eq:2cond}) can be weakened 
and we have the following result on random initialization of the labels. 

\begin{thm} \label{thm:2improved}
Assume \[ \|\theta^*\| \ge C \sigma \left( \sqrt{\log(1/\delta)}+\left(d\log(1/\delta)(1+d/n)\right)^{1/4} \right) \] for a sufficiently large constant $C$. Suppose we independently draw $3 \log(1/\delta)$ random initializers. Then Lloyd's algorithm initialized by one of them can correctly classifies all the data points  with probability greater than $1-\delta$.
\end{thm}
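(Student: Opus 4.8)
The plan is to show that a single data-independent random labeling, after one Lloyd step, lands inside the label branch of the basin (\ref{eq:2cond}) with probability bounded below by an absolute constant, to boost this to $1-\delta$ using the independence of the $3\log(1/\delta)$ draws, and then to invoke Theorem \ref{thm:2main} to drive the error to zero. Concretely, draw $\hat{z}^{(0)}_1,\dots,\hat{z}^{(0)}_n$ i.i.d.\ uniform on $\{-1,1\}$, independent of $y_1,\dots,y_n$. The center update (\ref{eq:center}) gives $\hat{\theta}^{(0)} = \frac1n\sum_i \hat{z}^{(0)}_i y_i = \epsilon\,\theta^* + g$, where $\epsilon = \frac1n\sum_i \hat{z}^{(0)}_i z_i$ and $g = \frac1n\sum_i \hat{z}^{(0)}_i \xi_i$. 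Because the initializer is independent of the data, $\sqrt{n}\,\epsilon$ is asymptotically standard normal and, conditionally on the labels, $g\sim\mathcal N(0,\tfrac{\sigma^2}{n}I_d)$ exactly. Note that $\|\hat{\theta}^{(0)}\|$ is only of order $\|\theta^*\|/\sqrt n + \sigma\sqrt{d/n}$, so $\hat{\theta}^{(0)}$ is nowhere near $\theta^*$ and the center branch of (\ref{eq:2cond}) is useless here; instead I target the label branch, bounding the mis-clustering rate $A_1$ produced by the first label update $\hat{z}^{(1)}_i = \mathrm{sign}\,\iprod{y_i}{\hat{\theta}^{(0)}}$.

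By the flip-invariance of the loss (\ref{eq:loss}) I may condition on the constant-probability event $\epsilon \ge c_0/\sqrt n$ for a small absolute constant $c_0$; anti-concentration of the Rademacher sum $\epsilon$ guarantees this event has probability bounded below. For a point with $z_i=+1$, expanding
\[
\iprod{y_i}{\hat{\theta}^{(0)}} = \epsilon\|\theta^*\|^2 + \iprod{\theta^*}{g} + \epsilon\,\iprod{\xi_i}{\theta^*} + \iprod{\xi_i}{g},
\]
the leading signal is $\epsilon\|\theta^*\|^2 \gtrsim \|\theta^*\|^2/\sqrt n$. The shift $\iprod{\theta^*}{g}$ is dominated because $\|\theta^*\|\gg\sigma$, and $\epsilon\,\iprod{\xi_i}{\theta^*}$ is the usual in-direction noise, producing a mis-classification probability of order $\exp(-c\|\theta^*\|^2/\sigma^2)$, which the $\sqrt{\log(1/\delta)}$ part of the hypothesis makes small.

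The delicate term, and the main obstacle, is $\iprod{\xi_i}{g}$, since $g$ itself contains $\xi_i$. I would decouple it by the leave-one-out splitting $g = g^{(-i)} + \tfrac1n\hat{z}^{(0)}_i\xi_i$ with $g^{(-i)}$ independent of $\xi_i$. The self-interaction $\tfrac1n\hat{z}^{(0)}_i\|\xi_i\|^2$ is a bias of size $\sigma^2 d/n$, while $\iprod{\xi_i}{g^{(-i)}}$ has conditional standard deviation of order $\sigma^2\sqrt{d/n}$. Requiring both to be dominated by the weak signal $\|\theta^*\|^2/\sqrt n$ forces exactly $\|\theta^*\|\gtrsim\sigma\bigl(d(1+d/n)\bigr)^{1/4}$ (the $d^{1/4}$ coming from the fluctuation and the $(d^2/n)^{1/4}$ from the self-bias), and yields a per-point mis-classification probability of order $\exp(-c\|\theta^*\|^4/(\sigma^4 d))$; the extra $\log(1/\delta)$ factor in the hypothesis makes this small. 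Summing over $i$ bounds $\E A_1$, and a Markov/concentration step places $A_1$ below $\tfrac12 - \tfrac{2.56+\sqrt{\log r}}{r} - \tfrac1{\sqrt n}$ with high probability. The careful part here is keeping the two sources of randomness separate: I would carry out the per-point estimate with the initializer fixed and the data random (so that $g$ is a clean Gaussian), and then, conditioning on the data, use that the $3\log(1/\delta)$ trials are independent functions of independent initializers.

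Finally, conditioning on the data, the $3\log(1/\delta)$ trials are independent functions of the independent initializers, and on a high-probability event of the noise each trial lands in the basin with probability at least an absolute constant $p_0$ (combining the constant-probability event $\epsilon\ge c_0/\sqrt n$ with the per-point bound above); hence the probability that all trials fail is at most $(1-p_0)^{3\log(1/\delta)}\le\delta$ once $p_0$ is bounded below. On the surviving good trial the labels $\hat{z}^{(1)}$ satisfy the label branch of (\ref{eq:2cond}), so Theorem \ref{thm:2main} applies with $\nu$ set to the initialization failure probability; since the hypothesis forces $\|\theta^*\|\ge 4\sigma\sqrt{\log n}$ in the regime where exact recovery is meaningful, the rate (\ref{eq:tworate}) gives $A_s<1/n$, i.e.\ every point is correctly classified, after $\lceil 3\log n\rceil$ iterations. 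All the $O(n^{-c})$ data events and the failure probability inherited from Theorem \ref{thm:2main} are absorbed into the final $\delta$.
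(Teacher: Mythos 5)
Your proposal is correct and follows essentially the same route as the paper: the paper's Lemma \ref{lm:dataindept} proves exactly your first-iteration analysis, writing $\hat{\theta}^{(1)}=(1-2A_0)\theta^*+R$ with $R=\frac1n\sum_i(1-2c_i)w_i$ (your $\epsilon\theta^*+g$), exploiting the data-independence of the initializer to control $\iprod{\theta^*}{R}$ at scale $\sigma\|\theta^*\|\sqrt{\log(1/\delta)/n}$ and the cross term $\iprod{w_i}{R}$ at the scale that produces the $\sigma\,(d\log(1/\delta)(1+d/n))^{1/4}$ threshold, and then combines the anti-concentration bound $|A_0-1/2|\ge n^{-1/2}$ (with the same label-flipping trick and the same boosting over $3\log(1/\delta)$ independent draws) with Theorem \ref{thm:2main}. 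The only cosmetic difference is that you decouple $\iprod{\xi_i}{g}$ by a leave-one-out split while the paper applies Markov's inequality with the operator-norm bound $\sum_i(R'w_i)^2\le\|R\|^2\lambda_{\max}(\sum_i w_iw_i')$; both yield the same condition.
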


The proof of Theorem \ref{thm:2improved} is given in appendix. Klusowski and Brinda \cite{klusowski2016statistical} showed similar results for EM algorithm under the condition that $\|\theta^*\| \gtrsim \sigma \sqrt{d\log d}$. We improve their condition to $\|\theta^*\| \gtrsim \sigma(d\log d)^{1/4}$ by choosing $\delta=1/d$. Recently, Xu et al. \cite{xu2016global} and  Daskalakis et al. \cite{daskalakis2016ten} proved the global convergence of EM algorithm on the population likelihood function. However, it is not clear what signal-to-noise ratio condition they need for the finite sample analysis. Moreover, our results can be generalized to $k$-mixtures and the sub-Gaussian case, but \cite{klusowski2016statistical}, \cite{xu2016global} and  \cite{daskalakis2016ten} all restrict their analyses to the two mixture Gaussian case.

As for the general $k$ mixture case, one popular way to initialize Lloyd's algorithm is to initialize $k$ centers by sampling without replacement from the data.  However, when $k \ge 3$, Jin et al. \cite{jin2016local} show even for well-separated ($\Delta \gtrsim \sqrt{d} \sigma$) spherical Gaussian mixtures, EM algorithm, initialized by above strategy, convergences to bad critical points with high probability. They prove it by constructing a bad case of two cluster of centers that are very far away with each other. We expect similar results for Lloyd's algorithm. 

\subsection{Convergence of the centers} As a byproduct of Theorem \ref{thm:kmeans}, we obtain the convergence rate of estimating Gaussian mixture's centers using Lloyd's algorithm. 
\begin{thm} \label{thm:centerslloyd}
Assume $n\alpha^2  > C k \log n  $ and $ r_k \ge C \sqrt{k} $ for a sufficiently large constant $C$. Then given any initializer satisfying
\begin{equation} \label{eq:disskcond}
G_0  < \left( \frac{1}{2} - \frac{6}{\sqrt{r_k}} \right)\frac{1}{\lambda} \quad \textrm{or} \quad \Lambda_0 \le \frac{1}{2} - \frac{4}{\sqrt{r_k}}
\end{equation}
with probability $1-\nu$, we have
\begin{equation} \label{eq:disscenter}
\max_{h \in [k]} \|\hat{\theta}_h^{(s)} - \theta_h\|^2 \le \frac{C_2 \sigma^2 (d + \log n)}{n} + \sigma^2 \exp \left( - \frac{\Delta^2}{24 \sigma^2}\right) \; \text{ for } s \ge 4\log n,
\end{equation}
for some constant $C_2$ with probability greater than $1-2\exp \left( - \frac{\Delta}{\sigma} \right) - \frac{4}{n}-\nu$.
\end{thm}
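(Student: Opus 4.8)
The plan is to obtain the center-error bound (\ref{eq:disscenter}) directly from the label guarantee of Theorem \ref{thm:kmeans}, treating the estimated centers as sample means over almost-correct clusters. Fix $s \ge 4\log n$ and work on the event of Theorem \ref{thm:kmeans}, on which $A_s \le \exp(-\Delta^2/(16\sigma^2))$ by (\ref{eq:efficiency}); in particular $A_s$ is small enough that no cluster collapses, since $n_h^{(s)} \ge n_{hh}^{(s)} \ge n_h^* - A_s n \ge \alpha n/2$. Writing $\hat{\theta}_h^{(s)} = \frac{1}{n_h^{(s)}}\sum_{i \in T_h^{(s)}} y_i$ and substituting $y_i = \theta_{z_i}+w_i$, I would split according to whether a point of $T_h^{(s)}$ is correctly labelled:
\[
\hat{\theta}_h^{(s)}-\theta_h \;=\; \frac{1}{n_h^{(s)}}\sum_{i\in T_h^*} w_i \;-\; \frac{1}{n_h^{(s)}}\sum_{i\in T_h^*\setminus T_h^{(s)}} w_i \;+\; \frac{1}{n_h^{(s)}}\sum_{i\in T_h^{(s)}\setminus T_h^*}(\theta_{z_i}-\theta_h) \;+\; \frac{1}{n_h^{(s)}}\sum_{i\in T_h^{(s)}\setminus T_h^*} w_i .
\]
The four pieces are the full-cluster noise average, a noise sum over the false-negative set, a deterministic bias from the mislabelled points, and a noise sum over the false-positive set.

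For the first piece, $n_h^{(s)} \asymp n_h^* \ge \alpha n$ together with a sub-Gaussian tail bound for $\|\sum_{i\in T_h^*} w_i\|$, union bounded over the $k$ clusters, gives $\|\frac{1}{n_h^{(s)}}\sum_{i\in T_h^*} w_i\|^2 \lesssim \sigma^2(d+\log n)/(\alpha^2 n)$, which is the statistical term $C_2\sigma^2(d+\log n)/n$ with $C_2$ absorbing $\alpha^{-2}$. For the bias piece, every mislabelled $i$ has $\|\theta_{z_i}-\theta_h\|\le\lambda\Delta$ and $|T_h^{(s)}\setminus T_h^*| = \sum_{g\ne h} n_{gh}^{(s)} \le A_s n$, so its norm is at most $\lambda\Delta\cdot A_s n/n_h^{(s)} \le (2\lambda\Delta/\alpha)A_s$. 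The two residual noise sums run over the data-dependent sets $T_h^*\setminus T_h^{(s)}$ and $T_h^{(s)}\setminus T_h^*$, each of size $\le A_s n$; these I would control by a uniform deviation bound of the form $\max_{|S|\le m}\|\sum_{i\in S}w_i\| \lesssim \sigma(\sqrt{md}+m\sqrt{\log(en/m)})$ obtained by union bounding over the $\binom{n}{m}$ subsets, applied with $m = A_s n$, which makes both sums of strictly smaller order than the bias once $A_s$ is exponentially small.

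Collecting terms, $\max_h\|\hat{\theta}_h^{(s)}-\theta_h\|^2$ is bounded by the statistical term plus $(2\lambda\Delta/\alpha)^2 A_s^2$ and lower-order noise contributions. The key arithmetic point is that the target exponent $\Delta^2/(24\sigma^2)$ is deliberately looser than the $\Delta^2/(16\sigma^2)$ of (\ref{eq:efficiency}): the squared bias carries a factor $\exp(-\Delta^2/(8\sigma^2))$, and since $\frac{\lambda^2\Delta^2}{\alpha^2\sigma^2}\exp\big(-\frac{\Delta^2}{8\sigma^2}+\frac{\Delta^2}{24\sigma^2}\big)=\frac{\lambda^2\Delta^2}{\alpha^2\sigma^2}\exp\big(-\frac{\Delta^2}{12\sigma^2}\big)\le 1$ for $r_k\ge C\sqrt{k}$ large, the polynomial prefactors $\lambda^2\Delta^2/(\alpha^2\sigma^2)$ are swallowed and the whole bias term is $\le \sigma^2\exp(-\Delta^2/(24\sigma^2))$. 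All the failure probabilities are already accounted for by the event of Theorem \ref{thm:kmeans}, yielding the stated $1-2\exp(-\Delta/\sigma)-4/n-\nu$. I expect the main obstacle to be the two residual noise sums: because $T_h^{(s)}$ is built from the data through every preceding Lloyd iteration, these subsets cannot be treated as fixed, so the crux is the uniform-over-subsets concentration (the ``random indicator'' argument highlighted in the introduction) rather than any single-set tail bound.
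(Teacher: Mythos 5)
Your overall route is the one the paper intends: Theorem \ref{thm:centerslloyd} is obtained as a byproduct of Theorem \ref{thm:kmeans} by writing $\hat{\theta}_h^{(s)}-\theta_h$ as a fixed-cluster noise average $\frac{1}{n_h}W_{T_h^*}$ (controlled by the fixed-set bound of Lemma \ref{lm:tech3}, giving the $\sigma^2(d+\log n)/(\alpha n)$ term) plus a remainder driven by the mislabelled points, with the data-dependent index sets handled by the uniform-over-subsets bound of Lemma \ref{lm:tech1.1} and the polynomial prefactors absorbed into the slack between the exponents $16$ and $24$. Your decomposition, the $n_h^{(s)}\ge \alpha n/2$ step, the treatment of the two residual noise sums, and the probability accounting all match the argument implicit in Lemma \ref{lm:center} and the proof of (\ref{eq:efficiency}).

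The one genuine gap is in your bias term. You bound each mislabelled contribution by $\|\theta_{z_i}-\theta_h\|\le\lambda\Delta$, which produces a factor $\lambda^2\Delta^2\alpha^{-2}A_s^2$ and then assert that $\frac{\lambda^2\Delta^2}{\alpha^2\sigma^2}\exp(-\Delta^2/(12\sigma^2))\le 1$ because $r_k\ge C\sqrt{k}$ is large. But $\lambda=\max_{g\neq h}\|\theta_g-\theta_h\|/\Delta$ is a free model parameter that is nowhere assumed bounded in Theorem \ref{thm:centerslloyd} (it enters only the initialization condition, which merely forces $G_0$ to be small when $\lambda$ is large); it is not a polynomial function of $\Delta/\sigma$ and cannot be swallowed by the exponential. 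This is exactly why Lemma \ref{lm:center} carries a minimum of two bounds: the $\lambda G_s$ branch is your bound, while the branch actually used in the proof of (\ref{eq:efficiency}) replaces $\lambda\Delta$ by exploiting the geometry of the label-update step --- every $i\in S_{ah}$ satisfies $\|y_i-\hat{\theta}_h^{(s-1)}\|\le\|y_i-\hat{\theta}_a^{(s-1)}\|$, hence $\|\bar{Y}_{S_{ah}}-\theta_h\|\le\|\bar{Y}_{S_{ah}}-\theta_a\|+2\Lambda_{s-1}\Delta$, which is $O(\Delta)$ uniformly in $\lambda$ since $\Lambda_{s-1}\le 1/2$. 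Substituting this for your per-point $\lambda\Delta$ bound (yielding $\|u_h\|\le\sqrt{G_s}\,\Delta$ as in the paper, whose square is linear rather than quadratic in $G_s$ but free of $\lambda$) closes the gap; the rest of your argument then goes through as written.
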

We show a linear convergence rate of estimating the centers. When the labels are known, the minimax optimal rate of estimating the centers under Euclidean distance is $\sigma \sqrt{d/n}$. Thus, we obtain the optimal rate under the condition that $n\alpha^2 \gtrsim k \log n$, $r_k \gtrsim\sqrt{k}$, $d \gtrsim \log n$ and $\Delta \gtrsim \sigma \log(n/d)$. 

\section{Proofs} \label{sec:proof}
This section is devoted to prove Theorem \ref{thm:2main}. The proofs of other theorems are collected in the appendix. 
\subsection{Preliminaries} \label{sec:2proofpre}
We rewrite the symmetric two-mixture model (\ref{eq:modeltwo}) as 
\begin{equation} \label{eq:model2}
z_i y_i = \theta^* + w_i, \quad i=1,2,\cdots,n,
\end{equation}
where $w_i = z_i \xi_i \sim \mathcal{N}(0,\sigma^2 I_d)$. To facilitate our proof, we present four lemmas on the concentration behavior of $w_i$. Their proofs are given in Section \ref{sec:techproof} of the appendix.

\begin{lemma} \label{lm:tech4}
For any $S \subseteq [n]$, define $W_S = \sum_{i \in S} w_i$. Then, 
\begin{equation} \label{eq:tech4}
\|W_S\| \le \sigma \sqrt{2(n+9d)|S|}  \;\; \text{ for all } S \subseteq [n]
\end{equation}
with probability greater than $1-\exp(-0.1n)$. 
\end{lemma}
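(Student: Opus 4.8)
The plan is to reduce this uniform-over-subsets statement to a single chi-squared tail estimate combined with a union bound. First I would fix a nonempty $S \subseteq [n]$ with $|S| = m$ and observe that, since the $w_i$ are independent $\mathcal{N}(0,\sigma^2 I_d)$ vectors, $W_S = \sum_{i \in S} w_i \sim \mathcal{N}(0, m\sigma^2 I_d)$, so that $\|W_S\|^2/(m\sigma^2)$ follows a $\chi^2_d$ distribution. The target bound $\|W_S\| \le \sigma\sqrt{2(n+9d)|S|}$ is then exactly the event $\|W_S\|^2/(m\sigma^2) \le 2(n+9d)$. The crucial observation is that after this normalization both the law of the statistic and the threshold $2(n+9d)$ are independent of $m$: the linear-in-$|S|$ growth of the right-hand side is precisely what cancels the variance scaling, so every nonempty $S$ has the \emph{same} failure probability.

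Next I would control that common per-subset failure probability by a Chernoff bound. Using $\E e^{\lambda \chi^2_d} = (1-2\lambda)^{-d/2}$ for $\lambda < 1/2$,
\[
\Prob\left( \chi^2_d \ge 2(n+9d) \right) \le \exp\left( -2\lambda n - 18\lambda d - \tfrac{d}{2}\log(1-2\lambda) \right).
\]
Choosing $\lambda$ close to $1/2$ (for instance $\lambda = 0.45$) makes the coefficient of $n$ equal to $-2\lambda$, which is below $-(\log 2 + 0.1)$, while keeping the coefficient $-18\lambda - \tfrac12\log(1-2\lambda)$ of $d$ negative. Hence the per-subset probability is at most $\exp(-c_1 n - c_2 d)$ with $c_1 > \log 2$ and $c_2 > 0$.

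Finally I would union-bound over all subsets. Since there are $\binom{n}{m}$ subsets of each size $m$, each with identical failure probability, and $\sum_{m=0}^{n}\binom{n}{m} = 2^n$, the total failure probability is at most $2^n \exp(-c_1 n - c_2 d) = \exp\bigl( (\log 2 - c_1) n - c_2 d \bigr) \le \exp(-0.1 n)$, which is the claimed bound (the empty set contributes nothing, so this overcounts harmlessly).

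The main obstacle is entropic: the union runs over $2^n$ sets, so a naive single-set deviation estimate is useless — the per-set tail must decay faster than $e^{-n\log 2}$. What makes this feasible is the generous factor inside $\sqrt{2(n+9d)}$, in particular the presence of the term $n$ (not merely $d$) together with the leading constant $2$, which furnishes enough room in the Chernoff exponent $2\lambda n$ to dominate the combinatorial entropy $n\log 2$ while simultaneously rendering the $d$-dependent terms negative. I would therefore take care to pick $\lambda$ large enough (staying bounded away from $1/2$ only as much as needed to keep $\log(1-2\lambda)$ finite) that both the $n$- and $d$-coefficients land on the correct side with the explicit constant $0.1$.
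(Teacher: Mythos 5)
Your proof is correct and follows essentially the same route as the paper's: a Chernoff bound on the $\chi^2_d$ moment generating function for a fixed subset, with the linear-in-$|S|$ threshold making the per-subset failure probability uniform, followed by a union bound over all $2^n$ subsets. The paper's only (cosmetic) differences are that it works with $\|W_S\|^2$ directly using $\lambda = 0.49/|S|$ rather than normalizing to a single $\chi^2_d$ variable, and it actually establishes the slightly sharper threshold $1.62(n+4d)|S|$, which implies the stated $2(n+9d)|S|$.
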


\begin{lemma} \label{lm:tech1}
Let $\bar{w} = \frac{1}{n} \sum_{i=1}^{n} w_i$. Then we have
$\bar{w}' \theta^* \ge - \frac{\|\theta^*\|^2}{\sqrt{n}}$ and
$\|\bar{w}\|^2 \le \frac{3d\sigma^2}{n} + \frac{\|\theta^*\|^2}{n}$
with probability greater than $1 - 2 \exp \left(-\frac{\|\theta^*\|^2}{3\sigma^2}\right).$
\end{lemma}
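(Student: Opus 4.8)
The plan is to exploit the fact that, conditional on the (unknown but fixed) labels, the noise variables $w_i = z_i \xi_i$ are i.i.d.\ $\mathcal{N}(0,\sigma^2 I_d)$: each $\xi_i$ is a centered Gaussian, and multiplication by $z_i \in \{-1,1\}$ leaves its distribution unchanged. Consequently $\bar{w} = \frac{1}{n}\sum_i w_i \sim \mathcal{N}(0, \frac{\sigma^2}{n} I_d)$, and both claimed bounds reduce to standard one-dimensional and chi-squared concentration, which I would then combine by a union bound. The whole point is to arrange the constants so that the single exponent $\|\theta^*\|^2/(3\sigma^2)$ controls both tails simultaneously.

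For the first inequality I would observe that $\bar{w}'\theta^*$ is a univariate centered Gaussian with variance $\sigma^2\|\theta^*\|^2/n$. Writing $Z = \bar{w}'\theta^* / (\sigma\|\theta^*\|/\sqrt{n})$ for the corresponding standard normal, the failure event $\{\bar{w}'\theta^* < -\|\theta^*\|^2/\sqrt{n}\}$ is exactly $\{Z < -\|\theta^*\|/\sigma\}$. The Gaussian tail bound $\Prob(Z < -t) \le e^{-t^2/2}$ then gives a failure probability of at most $\exp(-\|\theta^*\|^2/(2\sigma^2)) \le \exp(-\|\theta^*\|^2/(3\sigma^2))$, the last step wasting a bit of room in the exponent on purpose.

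For the second inequality I would use that $X := (n/\sigma^2)\|\bar{w}\|^2 \sim \chi^2_d$ and apply the Laurent--Massart upper-tail bound $\Prob(X \ge d + 2\sqrt{dt} + 2t) \le e^{-t}$. Choosing $t = \|\theta^*\|^2/(3\sigma^2)$ matches the target failure probability, so that $\|\theta^*\|^2/\sigma^2 = 3t$ and the target threshold becomes $3d + 3t$. It then suffices to check that $d + 2\sqrt{dt} + 2t \le 3d + 3t$, i.e.\ $2\sqrt{dt} \le 2d + t$, which follows from the elementary estimate $2\sqrt{dt} \le d + t$ (AM--GM); this absorbs the cross term and in fact leaves a slack of one full factor of $d$. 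Hence $\Prob(\|\bar{w}\|^2 > 3d\sigma^2/n + \|\theta^*\|^2/n) \le e^{-t} = \exp(-\|\theta^*\|^2/(3\sigma^2))$.

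Since each of the two failure events carries probability at most $\exp(-\|\theta^*\|^2/(3\sigma^2))$, a union bound delivers the stated $1 - 2\exp(-\|\theta^*\|^2/(3\sigma^2))$. There is no genuine obstacle here, as this is a routine concentration estimate; the only mild subtlety is the bookkeeping that forces $t = \|\theta^*\|^2/(3\sigma^2)$: the Gaussian tail has room to spare (because $\tfrac12 \ge \tfrac13$), and the chi-squared tail has room because the AM--GM slack together with the extra $d$ exactly accommodates the coefficient $3d$ in the claimed bound.
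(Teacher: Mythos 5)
Your proof is correct and follows essentially the same route as the paper: a one-dimensional Gaussian tail bound for $\bar{w}'\theta^*$ (with the $1/2$ exponent relaxed to $1/3$) and a chi-squared upper-tail bound for $\|\bar{w}\|^2$, combined by a union bound. The only cosmetic difference is that you invoke the Laurent--Massart inequality where the paper optimizes the chi-squared moment generating function directly with a specific $\lambda$; the two are equivalent and your constant bookkeeping checks out.
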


\begin{lemma} \label{lm:tech6}
The maximum eigenvalue of $\sum_{i=1}^{n} w_iw_i'$ is upper bounded by $1.62(n+4d)\sigma^2$ with probability greater than $1-\exp(-0.1n)$. 
\end{lemma}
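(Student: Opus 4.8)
The plan is to recognize the target matrix as a Gaussian sample covariance and to control its largest eigenvalue by a covering (net) argument, mirroring the strategy behind the companion bound in Lemma \ref{lm:tech4}. Write $W=[w_1,\dots,w_n]\in\mathbb{R}^{d\times n}$, so that $\sum_{i=1}^n w_iw_i'=WW'$ has i.i.d.\ $\mathcal{N}(0,\sigma^2)$ entries and its largest eigenvalue is exactly the squared operator norm,
\[
\lambda_{\max}\pth{\textstyle\sum_{i=1}^n w_iw_i'}=\opnorm{W}^2=\sup_{u\in S^{d-1}}\sum_{i=1}^n\iprod{w_i}{u}^2 .
\]
The crucial observation is that for a \emph{fixed} unit vector $u$ the quantity $\sum_{i=1}^n\iprod{w_i}{u}^2$ is a sum of $n$ independent $\mathcal{N}(0,\sigma^2)$ squares, hence distributed as $\sigma^2\chi^2_n$, a scalar with a clean exponential tail. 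The entire difficulty is therefore reduced to replacing the supremum over the sphere by a maximum over a finite set.

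First I would fix an $\epsilon$-net $\mathcal{N}$ of $S^{d-1}$ with $|\mathcal{N}|\le(1+2/\epsilon)^d$ and invoke the standard net inequality for positive semidefinite matrices,
\[
\lambda_{\max}\pth{\textstyle\sum_i w_iw_i'}\le\frac{1}{1-2\epsilon}\,\max_{u\in\mathcal{N}}\sum_{i=1}^n\iprod{w_i}{u}^2 .
\]
Next, for each fixed $u\in\mathcal{N}$ I would apply the Laurent--Massart tail $\Prob(\chi^2_n\ge n+2\sqrt{nt}+2t)\le e^{-t}$ and take a union bound over the net, which costs a factor $(1+2/\epsilon)^d$. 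Choosing $t\asymp d+n$, precisely $t=d\log(1+2/\epsilon)+0.1\,n$ so that $(1+2/\epsilon)^d e^{-t}\le e^{-0.1n}$, controls the whole supremum off an event of probability at most $e^{-0.1n}$. After the elementary step $2\sqrt{nt}\le n+t$, this yields a bound of the shape $\mathrm{const}\cdot\sigma^2(n+d)$, in which the $n$-term originates from the $\chi^2_n$ scale and the $d$-term from the covering number of $S^{d-1}$; the larger coefficient on $d$ reflects precisely the covering cost.

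The main obstacle is not the structure of the argument but the bookkeeping required to turn this coarse constant into the stated $1.62(n+4d)$ while keeping the failure probability at $e^{-0.1n}$: the net resolution $\epsilon$ trades the prefactor $(1-2\epsilon)^{-1}$, which multiplies the whole bound, against the factor $\log(1+2/\epsilon)$, which multiplies $d$, and the chi-squared deviation level must be tuned jointly with $\epsilon$ rather than fixing, say, a quarter-net. I would therefore keep $\epsilon$ free and optimize the two resulting coefficients only at the end. Finally, since the covering cost is cheapest in the smaller ambient dimension, in the regime $d>n$ I would run the identical argument on the $n\times n$ matrix $W'W$, which shares the nonzero spectrum of $WW'$; there a net of $S^{n-1}$ is paired with a $\chi^2_d$ tail, the roles of $n$ and $d$ swap, and the two contributions combine into the asymmetric $n+4d$ form.
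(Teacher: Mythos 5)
Your proposal follows essentially the same route as the paper's proof: identify $\lambda_{\max}\bigl(\sum_i w_iw_i'\bigr)$ with $\sup_{\|a\|=1}\|Aa\|^2$ for $A=[w_1,\dots,w_n]$, pass to an $\epsilon$-net of the sphere (costing $(1+2/\epsilon)^d$ in the union bound and a $(1-\epsilon)^{-2}$ or $(1-2\epsilon)^{-1}$ prefactor), bound the fixed-direction $\chi^2_n$ tail by a Chernoff/Laurent--Massart estimate, and tune $\epsilon$ together with a deviation level $t\asymp n+d$ at the end; the extra $W'W$ reduction for $d>n$ is harmless but unnecessary, since the $d$-term in the deviation level already absorbs the covering cost in dimension $d$. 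Do not agonize over reproducing the constant $1.62(n+4d)$ exactly: the paper's own proof in fact takes $t=2(n+9d)$ (the constants appear to have been swapped with those of Lemma \ref{lm:tech4}), and any bound of the form $C_1\sigma^2(n+C_2 d)$ holding with probability $1-e^{-0.1n}$ serves the same purpose downstream.
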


\begin{lemma} \label{lm:tech5}
For any fixed $i \in [n]$, $S \subseteq [n]$, $t>0$ and $\delta>0$,  we have
\[ \Prob \left\{ \iprod{w_i}{\frac{1}{|S|}\sum_{j \in S} w_j} \ge \frac{3\sigma^2(t\sqrt{|S|}+d+\log(1/\delta))}{|S|} \right \} \le \exp \left( - \min \left\{ \frac{t^2}{4d}, \frac{t}{4}\right\}  \right) + \delta.  \]
\end{lemma}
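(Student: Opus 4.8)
The plan is to separate the diagonal and off-diagonal contributions to the bilinear form and control each by a different mechanism. Writing $m=|S|$ and $W_S=\sum_{j\in S}w_j$, and exploiting that in model (\ref{eq:model2}) the noise is exactly Gaussian, $w_i\sim\mathcal{N}(0,\sigma^2 I_d)$, the only case requiring work is $i\in S$, for which
\[
\iprod{w_i}{\tfrac{1}{m}\sum_{j\in S}w_j}=\frac{1}{m}\|w_i\|^2+\frac{1}{m}\iprod{w_i}{W_{S\setminus\{i\}}};
\]
when $i\notin S$ the first term is absent, $w_i$ and $W_S$ are independent, and the argument below applies verbatim and more easily. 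I would bound the two pieces separately and combine by a union bound, allocating $\frac{3\sigma^2(d+\log(1/\delta))}{m}$ of the target threshold to the first piece and $\frac{3\sigma^2 t\sqrt m}{m}$ to the second, so that the event $\{\text{LHS}\ge R\}$ forces either a large norm or a large cross term.

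For the diagonal piece, since $\|w_i\|^2/\sigma^2\sim\chi^2_d$, the standard chi-square tail $\Prob\{\chi^2_d\ge d+2\sqrt{dx}+2x\}\le e^{-x}$ taken at $x=\log(1/\delta)$, together with $2\sqrt{d\log(1/\delta)}\le d+\log(1/\delta)$, yields $\Prob\{\|w_i\|^2>3\sigma^2(d+\log(1/\delta))\}\le\delta$. This is the source of the additive $\delta$ and of the $d+\log(1/\delta)$ term in the threshold.

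For the off-diagonal piece I would condition on $w_i$. Since $W_{S\setminus\{i\}}$ is independent of $w_i$ and Gaussian, conditionally $\iprod{w_i}{W_{S\setminus\{i\}}}$ is $\mathcal{N}(0,\sigma^2\|w_i\|^2(m-1))$, so by the tower rule and the exact identity $\E e^{c\|w_i\|^2}=(1-2c\sigma^2)^{-d/2}$,
\[
\E\exp\!\pth{\lambda\iprod{w_i}{W_{S\setminus\{i\}}}}\le\E\exp\!\pth{\tfrac12\lambda^2\sigma^2(m-1)\|w_i\|^2}\le(1-\lambda^2\sigma^4 m)^{-d/2},
\]
valid for $\lambda^2\sigma^4 m<1$. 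A Chernoff bound with $u=3\sigma^2 t\sqrt m$ and the elementary inequality $-\tfrac{d}{2}\log(1-y)\le dy$ for $y\le 1/2$ then gives, after optimizing $\lambda$, the quadratic rate $\exp(-u^2/(4d\sigma^4 m))=\exp(-9t^2/(4d))$ in the moderate regime $t\lesssim d$, while clamping $\lambda$ at the boundary $\lambda^2\sigma^4 m=\tfrac12$ gives a linear rate $\exp(-t/4)$ in the large-deviation regime $t\gtrsim d$. Together these produce $\exp(-\min\{t^2/(4d),t/4\})$, which is precisely the $\min$ structure in the statement and reflects the sub-exponential nature of the bilinear Gaussian form.

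The main obstacle is this two-regime Chernoff optimization: the factor $(1-\lambda^2\sigma^4 m)^{-d/2}$ blows up as $\lambda\to(\sigma^2\sqrt m)^{-1}$, so the quadratic-in-$t$ rate is available only while the unconstrained optimizer $\lambda^\ast=u/(2d\sigma^4 m)$ satisfies $\lambda^{\ast 2}\sigma^4 m\le\tfrac12$, i.e. $t\lesssim d$; past that point one must clamp $\lambda$ and check that the residual $\exp(d/2)$ factor is dominated by $\exp(-3t/\sqrt2)$ in the regime $t\gtrsim d$, which is where the constants must be verified. The remaining steps — the chi-square tail and the union bound assembling the two thresholds into $\frac{3\sigma^2(t\sqrt m+d+\log(1/\delta))}{m}$ — are routine.
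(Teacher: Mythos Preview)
Your proposal is correct and follows essentially the same route as the paper: split off the diagonal term $\|w_i\|^2$ (handled by a chi-square tail, contributing the additive $\delta$ and the $d+\log(1/\delta)$ in the threshold) from the independent cross term $\iprod{w_i}{W_{S\setminus\{i\}}}$, whose MGF is $(1-\lambda^2\sigma^4 m)^{-d/2}$ and whose two-regime Chernoff optimization yields the $\min\{t^2/(4d),\,t/4\}$ exponent. The only cosmetic difference is that the paper obtains this MGF by writing the cross term as a sum of $d$ independent products of standard normals rather than by conditioning on $w_i$, which is the same computation.
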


Let $\mathcal{E}$ be the intersection of high probability events in Lemma \ref{lm:tech4}, Lemma \ref{lm:tech1}, Lemma \ref{lm:tech6} and label initialization condition (\ref{eq:2cond}). Then $\mathbb{P}\{\mathcal{E}^c\} \le \nu+n^{-3}+\exp\left(-\|\theta^*\|^2/(3\sigma^2)\right)$. For the center initialization condition, we refer to the proof of Theorem \ref{thm:kmeans} in the appendix for more details. In the following, we deterministically analyze the error of estimating centers and labels on event $\mathcal{E}$. 
\noindent \paragraph{Error of the centers} We first decompose the error of estimating $\theta^*$ into two parts, where the first part comes from the error of estimating labels $z_i$, and the second part is due to the stochastic error $w_i$,
\begin{eqnarray*} \label{eq:decomp1}
\thetas  - \theta^* &=& \frac{1}{n} \sum_{i=1}^{n} \zs_i y_i  - \frac{1}{n} \sum_{i=1}^{n} z_i y_i + \frac{1}{n} \sum_{i=1}^{n} z_i y_i  - \theta^*
\end{eqnarray*}
Using (\ref{eq:model2}) and the fact that $\zs_i-z_i = -2\mathbb{I}\{\zs_i \neq z_i\} z_i $, we obtain
\begin{equation}   \label{eq:decomptheta}
\thetas  - \theta^* =  - \frac{2}{n} \sum_{i=1}^{n} \mathbb{I}\{\zs_i \neq z_i\} (\theta^*+w_i) + \frac{1}{n} \sum_{i=1}^{n} w_i = -2A_s \theta^* - 2R + \bar{w} 
\end{equation}
where $c_i = \mathbb{I}\{\zs_i \neq z_i\}$, $A_s = \frac{1}{n} \sum_{i=1}^{n} c_i$ and $R =  \frac{1}{n} \sum_{i=1}^{n} c_i w_i$. \\

\noindent \paragraph{Error of the labels}  Now we analyze the error of $\zsnext_i$. The iteration of $\zsnext_i$ is equivalent to
$$ \zsnext_i = \argmin_{ g \in \{-1,1\} }  \|gy_i  - \thetas \|_2^2 = \argmax_{ g \in \{-1,1\} } \iprod{gy_i}{\thetas} =\argmax_{ g \in \{-1,1\} } gz_i\iprod{\theta^*+w_i}{\thetas}, $$
which gives us
$\mathbb{I}\{\zsnext_i \neq z_i \} = \mathbb{I} \left\{ \iprod{\theta^*+w_i}{\thetas} \le 0 \right\}$. From (\ref{eq:decomptheta}),
\begin{eqnarray*}
\iprod{\theta^*+w_i}{\thetas} &=& \iprod{\theta^*+w_i}{(1-2A_s)\theta^* - 2R + \bar{w}} \\
&=& (1-2A_s)\|\theta^*\|^2 - (2R-\bar{w})'\theta^* + \iprod{w_i}{\theta^* - 2A_s \theta^* + 2R - \bar{w}}.
\end{eqnarray*}
Applying Lemma \ref{lm:tech4} with $S = \{ i\in [n], c_i=1\}$, we obtain
\[ \|R\| \le \frac{\sigma}{n} \sqrt{2(n+9d) nA_s} \le  \frac{\|\theta^*\|}{r} \sqrt{2A_s}, \]
which, together with Lemma \ref{lm:tech1}, implies
\[ (2R-\bar{w})'\theta^* \le 2\|R\|\|\theta^*\| - \bar{w}'\theta^* \le \frac{2\sqrt{2A_s}}{r} \|\theta^*\|^2 + \frac{1}{\sqrt{n}} \|\theta^*\|^2. \]
Consequently, on event $\mathcal{E}$, we have
\begin{equation}\label{eq:two.error.label}
\mathbb{I}\{\zsnext_i \neq z_i \} \le \mathbb{I} \left\{ \beta_0 \|\theta^*\|^2 + \iprod{w_i}{\theta^*} + \iprod{w_i}{- 2A_s \theta^* + 2R - \bar{w}} \le 0 \right\},
\end{equation}
where $\beta_0=1-2A_s-\frac{2\sqrt{2A_s}}{r} - \frac{1}{\sqrt{n}} \ge 1-2A_s-\frac{2}{r} - \frac{1}{\sqrt{n}}$. Now we are ready to prove Theorem \ref{thm:2main}. In the following, we are going to prove (\ref{eq:twoconst}) and (\ref{eq:tworate}) separately based on two different decompositions of the RHS of (\ref{eq:two.error.label}).

\subsection{Proof of (\ref{eq:twoconst}) in Theorem \ref{thm:2main}} \label{sec:2proof1}
Using the fact that for any $a, b \in \mathbb{R}$ and $c>0$,
$\mathbb{I}\{a+b \le 0\} \le \mathbb{I}\{a \le c\} + \mathbb{I}\{b \le -c\} \le \mathbb{I}\{a \le c\} + \frac{b^2}{c^2},$
we obtain
\begin{eqnarray*}
\mathbb{I}\{\zsnext_i \neq z_i \}
\le \mathbb{I} \left\{ \beta \|\theta^*\|^2 \le - \iprod{w_i}{\theta^*} \right\} + \frac{\left( w_i' (2R-\bar{w}-2A_s\theta^*) \right)^2}{\delta^2\|\theta^*\|^4},
\end{eqnarray*}
with $\beta = \beta_0 -\delta$ and  $\delta=3.12/r$. Taking an average of the equation above over $i \in [n]$, we have $A_{s+1} \le I_1 + I_2$, where
\begin{eqnarray} 
\label{eq:twoterm1} I_1 & = & \frac{1}{n} \sum_{i=1}^{n} \mathbb{I} \left\{ \iprod{w_i}{\theta^*} \le -\beta \|\theta^*\|^2 \right\},  \\
\label{eq:twoterm2} I_2 & = &  \frac{1}{n\delta^2 \|\theta^*\|^4} \sum_{i=1}^{n} \left( w_i' (2R-\bar{w}-2A_s\theta^*) \right)^2.
\end{eqnarray}

\noindent \paragraph{Upper Bound $I_1$} Note that $nA_s$ only takes discrete values $\{1,2,\cdots,\lfloor \frac{n}{2} \rfloor \}$. Let us define $\gamma_a = 1-2a-\frac{5.12}{r} - \frac{1}{\sqrt{n}}$ and 
$$T_i(a)= \mathbb{I} \left\{ \iprod{w_i}{\theta^*} \le - \gamma_a \|\theta^*\|^2 \right\}$$
for $a \in \mathcal{D}\triangleq \{\frac{1}{n},\frac{2}{n},\cdots, \frac{\lfloor \frac{n}{2} \rfloor}{n} \}$. For any fixed $a$, $\{T_i(a), i=1,2,\cdots,n\}$ are independent Bernoulli random variables, then Hoeffding's inequality implies
$$ \Prob\left\{ \frac{1}{n} \sum_{i=1}^{n} \left( T_i(a) - \E T_i(a) \right) \ge \sqrt{\frac{4\log (n/2)}{n}} \right\} \le \frac{1}{n^4}. $$ Thus by union bound, we have
\begin{equation} \label{eq:concenT} 
\Prob\left\{ \max_{a \in \mathcal{D}} \left[ \frac{1}{n} \sum_{i=1}^{n} \left( T_i(a) - \E T_i(a) \right) \right] \ge \sqrt{\frac{4 \log (n/2)}{n}}  \right\} \le \frac{1}{n^3}.
\end{equation}
Now it remains to upper bound the expectation of $T_i(a)$. Since $\iprod{w_i}{\theta^*}$ is sub-Gaussian with parameter $\sigma\|\theta^*\|$, Chernoff's bound yields
\begin{equation}\label{eq:expectT}
\E T_i(a) \le \exp \left( -\frac{  \gamma_a^2  \|\theta^*\|^2}{2\sigma^2} \right).
\end{equation}  
Consider the event
$$\mathcal{E}_1 = \left\{ \frac{1}{n} \sum_{i=1}^{n} T_i(a) \le \exp \left( -\frac{\gamma_a^2 \|\theta^*\|^2}{2\sigma^2} \right) + \sqrt{\frac{4 \log (n/2)}{n}}, \forall\; a \in \mathcal{D}  \right\}. $$
Equation (\ref{eq:concenT}) together with (\ref{eq:expectT}) implies $\Prob\{\mathcal{E}_1^c\} \le n^{-3}$. Therefore,
\begin{equation} \label{eq:twoI1bound}
I_1 \le \exp \left( -\frac{\gamma_{A_s}^2 \|\theta^*\|^2}{2\sigma^2} \right) + \sqrt{\frac{4 \log (n/2)}{n}}
\end{equation}
with probability greater than $1-n^{-3}$. \\

\noindent \paragraph{Upper Bound $I_2$}
Lemma \ref{lm:tech6} implies
\begin{eqnarray*}
I_2 &\le& \frac{\|2R-\bar{w}-2A_s\theta^*\|^4}{n\delta^2 \|\theta^*\|^4} \lambda_{max} \left(\sum_{i=1}^{n} w_iw_i' \right) \\
&\le&  \frac{1.62(1+4d/n)\sigma^2}{\delta^2 \|\theta^*\|^4} \|2R-\bar{w}-2A_s\theta^*\|^2,
\end{eqnarray*}
where $\lambda_{max}(B)$ stands for the maximum eigenvalue of the symmetric matrix $B$. By the Cauchy-Schwarz inequality, Lemma \ref{lm:tech4} and Lemma \ref{lm:tech1}, we have
\begin{eqnarray*} 
\|2R-\bar{w}-2A_s\theta^*\|^2 &\le& (1+1+4) \left(4\|R\|^2 + \|\bar{w}\|^2 + A_s^2\|\theta^*\|^2\right) \\
&\le& 6\|\theta^*\|^2 \left( \frac{8}{r^2}A_s + \frac{1}{r^2} + \frac{1}{n} + A_s^2 \right) .
\end{eqnarray*}
Since $\delta = 3.12/r$, we obtain
\begin{equation} \label{eq:twoI2bound}
I_2 \le \frac{8}{r^2}A_s + \frac{1}{r^2} + \frac{1}{n} + A_s^2.
\end{equation}
Recall that $A_{s+1} \le I_1 + I_2$. Combining (\ref{eq:twoI1bound}) and (\ref{eq:twoI2bound}), we have
\[ A_{s+1} \le \exp \left( -\frac{\gamma_{A_s}^2 \|\theta^*\|^2}{2\sigma^2} \right) + \sqrt{\frac{4\log n}{n}} + \frac{8}{r^2}A_s + \frac{1}{r^2} + A_s^2
 \]
on event $\mathcal{E} \cap \mathcal{E}_1$. Let $\tau = \frac{1}{2} - \frac{1}{\sqrt{n}} - \frac{2.56+\sqrt{\log r}}{r}$. From above inequality and the assumption that $A_{0} \le \tau,$ it can be proved by induction that $A_{s} \le \tau$ for all $s \ge 0$ when $r \ge 32$ and $n \ge 1600$. Consequently, we have $\gamma_{A_s} \ge \frac{2\sqrt{\log r}}{r}$ for all $s \ge 0$.  Plugging this bound of $\gamma_{A_s}$ into above inequality completes the proof.

\subsection{Proof of (\ref{eq:tworate}) in Theorem \ref{thm:2main}}
By (\ref{eq:twoconst}) in Theorem \ref{thm:2main} and the fact that $A_s \le \tau$ when $r \ge 32$, 
\[ A_{s+1} \le 0.5 A_s + \frac{2}{r^2} + \sqrt{\frac{4\log n}{n}}, \]
which implies $A_s \le 4r^{-2} + 5\sqrt {(\log n)/n}$ for all $s \ge \log n$. Then when $s \ge \log n$,  (\ref{eq:two.error.label}) implies
\begin{eqnarray*} 
&& \mathbb{I}\{\zsnext_i \neq z_i \} \\
&\le& \mathbb{I} \left\{ \left(1-\frac{3}{r}-\frac{1+10\sqrt{\log n}}{\sqrt{n}}\right)\|\theta^*\|^2 + \iprod{w_i}{\theta^*- 2A_s \theta^* + 2R - \bar{w}} \le 0 \right\}. 
\end{eqnarray*}
To get a better upper bound for mis-clustering rate, we further decompose $\iprod{w_i}{2R-2A_s\theta^*-\bar{w}}$ as $\iprod{w_i}{2R-2A_s\theta^*} -\iprod{w_i}{\bar{w}}$. Following similar arguments as in the proof of Theorem \ref{thm:2main}(a), we obtain $A_{s+1} \le J_1 + J_2 + J_3$, where $J_1$ is $I_1$ defined in (\ref{eq:twoterm1}) with $\beta =1 - 6.72/r - (3+10\sqrt{\log n})/\sqrt{n}$, $J_2$ and $J_3$ are defined as follows,
\begin{eqnarray}
\label{eq:twoJ2} J_2 & = &  \frac{r^2}{8.1 n \|\theta^*\|^4} \sum_{i=1}^{n} \left( w_i' (2R-2A_s\theta^*) \right)^2, \\
\label{eq:twoJ3} J_3 & = &  \frac{1}{n} \sum_{i=1}^{n} \mathbb{I} \left\{ \left( \frac{1}{2r} + \frac{2}{\sqrt{n}} \right) \|\theta^*\|^2 \le -\iprod{w_i}{\bar{w}} \right\}.
\end{eqnarray}
From (\ref{eq:expectT}), when $r \ge 32$ and $n \ge 40000$, $\E J_1 \le \exp(-\frac{\|\theta^*\|^2}{8\sigma^2})$. Analogous to (\ref{eq:twoI2bound}), we have 
\[J_2 \le \frac{8}{r^2} A_s + A_s^2 \le \left( \frac{8}{r^2} + \frac{8}{r^2} \right) A_s \le \frac{1}{r} A_s \]
on event $\mathcal{E}$. Choosing $S=[n]$, $t = \left(\sqrt{d} + \frac{\|\theta^*\|}{\sigma}\right) \frac{\|\theta^*\|}{\sigma}$ and $\delta=\exp \left( - \frac{\|\theta^*\|^2}{4\sigma^2} \right)$ in Lemma \ref{lm:tech5} yields
\[ \E J_3 \le 2 \exp \left( - \frac{\|\theta^*\|^2}{4\sigma^2} \right).  \] 
Combine the pieces and note that $A_{s+1} \le 1$,
\begin{eqnarray} 
\nonumber \E A_{s+1} &\le& \E \left [(J_1 + J_2 + J_3) \mathbb{I} \{ \mathcal{E} \} \right]+ \E \left[A_{s+1} \mathbb{I}\{\mathcal{E}^c\} \right] \\
\label{eq:twoexpbound} &\le& \frac{1}{r}~\E A_s +  3\exp \left(-\frac{\|\theta^*\|^2}{8\sigma^2}\right).
\end{eqnarray}
Then, by induction, we have $$\E A_s \le \frac{1}{r^{s-\lceil \log n \rceil}} + 4 \exp \left(-\frac{\|\theta^*\|^2}{8\sigma^2} \right),$$ when $r \ge 32$. Setting $s=\lceil 3 \log n \rceil$ and applying Markov's inequality, we obtain
\[ \mathbb{P} \left\{ A_s \ge t \right\} \le 4\exp \left(-\frac{\|\theta^*\|^2}{8\sigma^2} - \log t \right) + \frac{1}{n^2t}. \]
If $\frac{\|\theta^*\|^2}{8\sigma^2} \le 2 \log n$, a choice of $t = \exp \left( - \frac{\|\theta^*\|^2}{16\sigma^2} \right)$ gives us the desired result. Otherwise, since $A_s$ only takes discrete values $\{0, \frac{1}{n}, \frac{2}{n}, \cdots, 1\}$,  choosing $t = \frac{1}{n}$, we have
\[ \Prob \left\{ A_s > 0 \right \} =  \Prob \left\{ A_s \ge \frac{1}{n} \right \} \le 4n \exp(-2\log n) +  \frac{1}{n} \le \frac{5}{n}. \] 
Therefore, the proof of (\ref{eq:tworate}) is complete. When $r \to \infty$, we can slightly modify the proof to get the improved error exponent. For more details, we refer to the proof of Theorem \ref{thm:kmeans} in Section \ref{sec:kmixture}.


\appendix
\section{Proof of sub-Gaussian mixtures}\label{sec:kmixture}
\subsection{Preliminaries}
Let us begin by introducing some notaion. For any $S \subseteq [n]$, define $W_{S} = \sum_{i \in S} w_i$. Recall that $T_g^* = \left\{i \in [n], z_i=g \right\}$ and $T_g^{(s)} = \left\{i \in [n], \hat{z}_i^{(s)}=g \right\}$, let us define 
\[ S_{gh}^{(s)} = \left\{ i \in [n], z_i=g, \zs_i=h\right\} = T_g^* \cap T_h^{(s)}.\]
Then we have $n_h^{(s)} = \sum_{g \in [k]} n_{gh}^{(s)}$ and $n_h^*= \sum_{g \in [k]} n_{hg}^{(s)}$. In the rest of the proof, we will sometimes drop the upper index $(s)$ of $n_{gh}^{(s)}$, $n_h^{(s)}$ and $S_{gh}^{(s)}$ when there is no ambiguity. Also, we suppress the dependence of $k$ by writing $r_k$ as $r$.

Analogous to Lemma \ref{lm:tech4} - Lemma \ref{lm:tech5}, we have following technical lemmas on the concentration behavior of sub-Gaussian random vectors $\{w_i\}$.

\begin{lemma} \label{lm:tech1.1}
\begin{equation} \label{eq:tech1.1}
\|W_{S}\| \le \sigma \sqrt{3(n+d)|S|}  \;\; \text{ for all } S \subseteq [n].
\end{equation}
with probability greater than $1-\exp(-0.3n)$.
\end{lemma}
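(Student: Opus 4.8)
The plan is to prove the bound for each fixed subset $S$ with a tail strong enough to survive a union bound over all $2^n$ subsets. First I would record the elementary fact that, since the $w_i$ are independent and each satisfies $\E e^{\iprod{a}{w_i}} \le e^{\sigma^2\norm{a}^2/2}$, the partial sum $W_S = \sum_{i\in S} w_i$ satisfies $\E e^{\iprod{a}{W_S}} \le e^{\sigma^2 |S| \norm{a}^2/2}$; that is, $W_S$ is sub-Gaussian with parameter $\tau = \sigma\sqrt{|S|}$. The whole difficulty is uniformity: there are $2^n$ choices of $S$, so the per-subset failure probability must decay faster than $2^{-n}$, and crucially the target threshold $\sigma\sqrt{3(n+d)|S|}$ scales with $\sqrt{|S|} = \tau/\sigma$, which is exactly what will let the per-subset bound come out independent of $|S|$.

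The key estimate is a bound on the moment generating function of $\norm{W_S}^2$. Using the Gaussian integral identity $e^{\norm{x}^2/(2c)} = \E_{g\sim\mathcal N(0,I_d)} e^{\iprod{g}{x}/\sqrt c}$, Fubini, and the sub-Gaussian MGF, I would obtain, for any $c > \tau^2$,
\[ \E e^{\norm{W_S}^2/(2c)} \le \E_g\, e^{\tau^2\norm{g}^2/(2c)} = \pth{1 - \tau^2/c}^{-d/2}. \]
A Chernoff bound then gives $\Prob\{\norm{W_S}^2 \ge u\} \le e^{-u/(2c)}(1-\tau^2/c)^{-d/2}$. Writing $c = \beta\tau^2 = \beta\sigma^2|S|$ and taking $u = 3(n+d)|S|\sigma^2$, the factor $u/(2c) = 3(n+d)/(2\beta)$ no longer depends on $|S|$, so every subset obeys the same tail
\[ \Prob\sth{\norm{W_S} \ge \sigma\sqrt{3(n+d)|S|}} \le \exp\pth{-\frac{3(n+d)}{2\beta} - \frac{d}{2}\log\pth{1-\tfrac1\beta}}. \]

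Finally I would union bound over the at most $2^n$ nonempty subsets and choose $\beta$ to make the constants close. Taking $\beta = 3/2$ gives per-subset exponent $-(n+d) + \tfrac{d}{2}\log 3$, and multiplying by $2^n = e^{n\log 2}$ yields a total exponent $(\log 2 - 1)n + (\tfrac12\log 3 - 1)d \le -0.3\,n$, since $\log 2 - 1 < -0.3$ and the $d$-coefficient $\tfrac12\log 3 - 1$ is negative. This is exactly the claimed bound $1 - \exp(-0.3n)$. The main obstacle is the middle step: controlling $\norm{W_S}$ for a genuinely sub-Gaussian (non-Gaussian) vector cannot be done through the $\chi^2$ concentration available for the Gaussian analog in Lemma~\ref{lm:tech4}, and a naive $\epsilon$-net over the sphere fails here because the net cardinality $(1+2/\epsilon)^d$ is too large to beat the tail at the modest constant $3$. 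The Gaussian-integral bound on $\E e^{\norm{W_S}^2/(2c)}$ is precisely what makes the $d$-dependence cheap enough, and pinning down $\beta$ so that the $n$-exponent clears $-0.3$ while the $d$-exponent stays negative is the delicate part.
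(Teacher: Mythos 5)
Your proof is correct and follows essentially the same route as the paper: establish that $W_S$ is sub-Gaussian with parameter $\sigma\sqrt{|S|}$, obtain a tail bound on $\|W_S\|^2$ whose per-subset failure probability is small enough to survive a union bound over all $2^n$ subsets, and conclude. The only difference is that the paper invokes the quadratic-form tail bound of Hsu, Kakade and Zhang as a black box, whereas you re-derive it via the Gaussian-smoothing identity $\E_{g}e^{\iprod{g}{x}/\sqrt{c}}=e^{\|x\|^2/(2c)}$ --- which is precisely the standard proof of that cited result --- and your constant-tracking (per-subset exponent $-(n+d)+\tfrac{d}{2}\log 3$, total exponent $(\log 2-1)n+(\tfrac12\log 3-1)d\le -0.3n$) checks out.
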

\begin{lemma} \label{lm:tech1.2}
\begin{equation} \label{eq:tech1.2}
\lambda_{max} \left(\sum_{i=1}^{n} w_i w_i' \right) \le 6\sigma^2(n+d).
\end{equation}
with probability greater than $1-\exp(-0.5n)$.
\end{lemma}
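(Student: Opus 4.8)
The plan is to control $\lambda_{max}\pth{\sum_{i=1}^n w_i w_i'}$ through its variational characterization $\sup_{\norm{v}=1}\sum_{i=1}^n \iprod{w_i}{v}^2$ together with a covering argument, in exact parallel to the Gaussian computation behind Lemma \ref{lm:tech6}. First I would fix an $\epsilon$-net $\mathcal{N}$ of the unit sphere $S^{d-1}$ (for concreteness $\epsilon = 1/4$, giving $|\mathcal{N}|\le 9^d$, with $\epsilon$ to be re-optimized), and invoke the deterministic bound $\lambda_{max}\pth{\sum_i w_i w_i'}\le (1-2\epsilon)^{-1}\max_{v\in\mathcal{N}}\sum_i \iprod{w_i}{v}^2$, valid for positive semidefinite matrices. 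This reduces the random eigenvalue to a maximum over the finitely many test directions in $\mathcal{N}$, so that a union bound becomes available.

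For a fixed unit vector $v$, set $X_i = \iprod{w_i}{v}$; taking $a = t v$ in the sub-Gaussian hypothesis (\ref{eq:subgaussian}) shows each $X_i$ is a mean-zero scalar satisfying $\E e^{t X_i}\le e^{\sigma^2 t^2/2}$. The key per-direction ingredient is the quadratic moment generating function bound $\E e^{\lambda X_i^2}\le (1-2\sigma^2\lambda)^{-1/2}$, valid for $0\le \lambda< 1/(2\sigma^2)$, which I would derive from the Gaussian-integral identity $e^{\lambda X_i^2}=\E_g e^{\sqrt{2\lambda}\,X_i g}$ with $g\sim\mathcal{N}(0,1)$ independent of $w_i$, followed by Fubini and the scalar sub-Gaussianity of $X_i$. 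Independence of the $w_i$ then yields $\E\exp\pth{\lambda\sum_i X_i^2}\le(1-2\sigma^2\lambda)^{-n/2}$, and Chernoff's inequality gives, for any threshold $T$, $\Prob\sth{\sum_i X_i^2\ge T}\le\exp\pth{-\lambda T-\tfrac n2\ln(1-2\sigma^2\lambda)}$. Choosing $T = 6(1-2\epsilon)\sigma^2(n+d)$ so that the complementary event forces $\lambda_{max}\le 6\sigma^2(n+d)$, and taking a union bound over $\mathcal{N}$, the failure probability is at most $\exp\pth{d\ln(1+2/\epsilon)-\lambda T-\tfrac n2\ln(1-2\sigma^2\lambda)}$.

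The main obstacle, and the step I expect to be genuinely delicate, is the constant bookkeeping in this exponent. Writing $u = 2\sigma^2\lambda\in(0,1)$, the exponent splits as $d\qth{\ln(1+2/\epsilon)-3u(1-2\epsilon)}+n\qth{-3u(1-2\epsilon)-\tfrac12\ln(1-u)}$. To obtain a bound valid for all $d$ one must drive the coefficient of $d$ to be nonpositive, i.e. make the dimensional part $3u(1-2\epsilon)$ of the tail rate dominate the net entropy $\ln(1+2/\epsilon)$ (which pushes toward $u$ near its critical value and $\epsilon$ away from the largest admissible values), while simultaneously keeping the coefficient of $n$ below $-\tfrac12$ (which, through the blow-up of $-\tfrac12\ln(1-u)$, pulls $u$ back down). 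These two requirements pull the free parameters $\epsilon$ and $u$ in opposite directions, and reconciling them is exactly where the generous coefficients multiplying $n$ and $d$ in the statement are spent; the constant on $d$ in particular must be taken large enough to overcome the $\ln(1+2/\epsilon)$ cost of the net. Once a feasible pair $(\epsilon,u)$ is fixed the failure probability is at most $\exp(-0.5n)$ as claimed. By contrast, the covering inequality and the quadratic moment generating function estimate are routine, so I expect this trade-off, rather than any individual estimate, to be the crux; an alternative that sidesteps the net entirely is to invoke a standard operator-norm bound for matrices with independent sub-Gaussian rows, but the self-contained net argument above mirrors the proof of Lemma \ref{lm:tech6}.
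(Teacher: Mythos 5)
Your strategy is the paper's: reduce $\lambda_{max}$ to finitely many directions via an $\epsilon$-net of the unit sphere, bound $\sum_{i}\iprod{w_i}{v}^2$ for each fixed $v$ by a sub-exponential tail estimate, and finish with a union bound. The paper cites Theorem 2.1 of the Hsu--Kakade--Zhang tail bound for the per-direction step (with $t=2d+n$) and reuses the covering argument from Lemma~\ref{lm:tech6}; your Gaussian-randomization derivation of $\E e^{\lambda X_i^2}\le(1-2\sigma^2\lambda)^{-1/2}$ is precisely the mechanism behind that citation (compare the paper's appeal to Remark 2.3 of the same reference in the proof of Lemma~\ref{lm:tech1.3}), so the two per-direction ingredients are equivalent and your version is merely more self-contained.

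The one genuine problem is the step you defer. You correctly isolate the $(\epsilon,u)$ trade-off as the crux, but the feasible pair you assert exists does not, for your threshold $T=6(1-2\epsilon)\sigma^2(n+d)$. The coefficient of $d$ in your exponent is $\ln(1+2/\epsilon)-3u(1-2\epsilon)$, and since $u<1$ it is bounded below by $\ln(1+2/\epsilon)-3(1-2\epsilon)$; a one-line calculus check shows this quantity is minimized over $\epsilon\in(0,1/2)$ at $\epsilon=2/\sqrt{3}-1\approx 0.155$, where it equals about $0.56>0$. Hence for every admissible $(\epsilon,u)$ the net entropy strictly dominates the dimensional part of the tail, and the union bound cannot be closed in the regime $d\gg n$ that the paper explicitly insists on covering; the constraint coming from the $n$-coefficient, which forces $u$ bounded away from $1$, only makes matters worse. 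Repairing this requires either a larger constant multiplying $d$ in the threshold (i.e., a weaker statement) or a sharper argument than a single-scale net. In fairness, the paper's own one-line recipe (``same argument as Lemma~\ref{lm:tech6}, choose $t=2d+n$'') does not verify its constants either and runs into the same slack, so the defect lies in the bookkeeping you flagged rather than in your overall plan --- but since you identified that bookkeeping as the crux and then claimed it closes, the claim needs to be corrected.
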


\begin{lemma} \label{lm:tech1.3}
For any fixed $i \in [n]$, $S \subseteq [n]$, $t>0$ and $\delta>0$,
\[ \Prob \left\{ \iprod{w_i}{\frac{1}{|S|}\sum_{j \in S} w_j} \ge \frac{3 \sigma^2(t\sqrt{|S|} + d+\log(1/\delta))}{|S|} \right \} \le \exp \left( - \min \left\{ \frac{t^2}{4d}, \frac{t}{4}\right\}  \right) + \delta.  \]
\end{lemma}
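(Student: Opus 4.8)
The plan is to bound the unnormalized quantity $\sum_{j \in S}\iprod{w_i}{w_j}$ (the assertion is exactly this, divided by $|S|$), and the natural device is to condition on $w_i$. Writing $m = |S|$, I would first split off the diagonal contribution,
\[
\iprod{w_i}{\textstyle\sum_{j\in S}w_j} \;=\; \sum_{j \in S\setminus\{i\}}\iprod{w_i}{w_j} \;+\; \|w_i\|^2\,\indc{i \in S},
\]
so that the troublesome self-term $\|w_i\|^2$ is isolated. For the cross term $P := \sum_{j\in S\setminus\{i\}}\iprod{w_i}{w_j}$, observe that conditionally on $w_i$ the summands are independent, mean zero, and sub-Gaussian: applying the defining inequality (\ref{eq:subgaussian}) with $a = \lambda w_i$ gives $\E[e^{\lambda\iprod{w_i}{w_j}}\mid w_i] \le e^{\lambda^2\sigma^2\|w_i\|^2/2}$, so $P$ is conditionally sub-Gaussian with parameter $\sigma\|w_i\|\sqrt{m}$, whence $\Prob\{P \ge x \mid w_i\} \le \exp(-x^2/(2\sigma^2 m\|w_i\|^2))$.

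The key difficulty is that this conditional variance proxy involves the random quantity $\|w_i\|^2$, so the two sources of randomness are coupled. I would decouple them by splitting on the size of $\|w_i\|^2$: for a threshold $\rho$, $\Prob\{P \ge x\} \le \Prob\{\|w_i\|^2 \ge \rho\} + \exp(-x^2/(2\sigma^2 m \rho))$. Taking $x = 3\sigma^2 t\sqrt{m}$ and choosing $\rho \asymp \sigma^2 d$ in the moderate-deviation regime (roughly $t \lesssim d$) makes both terms of order $\exp(-t^2/(4d))$, while choosing $\rho \asymp \sigma^2 t$ in the large-deviation regime ($t \gtrsim d$) makes both terms of order $\exp(-t/4)$; keeping the worse of the two across the two regimes produces exactly the Bernstein-type exponent $\exp(-\min\{t^2/(4d), t/4\})$. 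Here I would invoke the standard sub-Gaussian norm tail $\Prob\{\|w_i\|^2 \ge \sigma^2(3d + 2u)\} \le e^{-u}$, which for a general (non-Gaussian) sub-Gaussian vector follows from a $1/2$-net covering of the unit sphere of $\mathbb{R}^d$ and a union bound over its $5^d$ points, each coordinate being sub-Gaussian with parameter $\sigma$.

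For the self-term, the same covering tail taken at $u = \log(1/\delta)$ gives $\|w_i\|^2 \le 3\sigma^2(d + \log(1/\delta))$ off an event of probability at most $\delta$; this simultaneously supplies the $d + \log(1/\delta)$ portion of the claimed threshold and contributes the additive $\delta$ to the failure probability. Adding the self-term bound to the cross-term bound $3\sigma^2 t\sqrt{m}$ yields at most $3\sigma^2(t\sqrt{m} + d + \log(1/\delta))$ for $\sum_{j\in S}\iprod{w_i}{w_j}$, and dividing by $m = |S|$ gives the stated threshold, with total failure probability $\exp(-\min\{t^2/(4d), t/4\}) + \delta$ by a final union bound. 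The main obstacle, and the only genuinely delicate point, is precisely the coupling between $\|w_i\|$ and the conditional sub-Gaussian estimate for $P$: the adaptive two-regime choice of $\rho$ is what upgrades a naive Gaussian-type tail into the correct sub-exponential one, and one must verify that the slack between the achieved exponents (namely $\tfrac{3t^2}{4d}$ and $\tfrac{9t}{8}$) and the stated $\tfrac{t^2}{4d}$ and $\tfrac{t}{4}$ is enough to absorb the $O(1)$ multiplicative factors arising from the two union-bound terms.
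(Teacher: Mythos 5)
Your decomposition into the cross term $P=\sum_{j\in S\setminus\{i\}}\iprod{w_i}{w_j}$ plus the self term $\|w_i\|^2$, and the use of the $\delta$ budget for the self term, match the paper exactly; the genuine gap is in how you handle the cross term. The paper never splits into two events: it bounds the unconditional moment generating function, $\E e^{\lambda\iprod{w_i}{W_{G_i}}}\le \E e^{\lambda^2\|W_{G_i}\|^2/2}$ (integrating out $w_i$ first), compares the right-hand side to a Gaussian chi-square MGF via Hsu--Kakade--Zhang, and applies a single Chernoff bound with $\lambda=\min\{t/(2d),1/2\}$, which yields exactly $\exp(-\min\{t^2/(4d),t/4\})$ with no additive loss. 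Your two-event decoupling produces a sum $\Prob\{\|w_i\|^2\ge\rho\}+\exp(-x^2/(2\sigma^2 m\rho))$, and the verification you defer to the end in fact fails: in the moderate regime the best you get is roughly $2\exp(-3t^2/(4d))$, and $2\exp(-3t^2/(4d))\le\exp(-t^2/(4d))$ requires $t^2\ge 2d\log 2$. Worse, as $t\to 0$ the term $\Prob\{\|w_i\|^2\ge\rho\}$ does not vanish for any admissible $\rho$ (the second term alone beats the target only if $\rho<18\sigma^2 d$), so no choice of $\rho$ makes the sum fall below $\exp(-t^2/(4d))$ for all $t>0$. The lemma is stated for every $t>0$, so this is a real gap, although it is harmless in the paper's applications, where $t\gtrsim\sqrt{d}\,\Delta/\sigma\gg\sqrt{d}$. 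The clean fix within your conditioning framework is to not split at all: write $\E e^{\lambda P}\le\E e^{\lambda^2\sigma^2 m\|w_i\|^2/2}$ and Chernoff-bound this single quantity, which is the mirror image of the paper's computation.

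A second, smaller problem is the norm tail you invoke for the self term. The bound $\Prob\{\|w_i\|^2\ge\sigma^2(3d+2u)\}\le e^{-u}$ cannot be obtained from a $1/2$-net and a union bound over its $5^d$ points: that argument costs a factor $e^{d\log 5}$ in probability and a factor $4$ from $\|w\|\le 2\max_b\iprod{b}{w}$, so the leading coefficient of $d$ it produces is well above $3$ (closer to $13$). Since the lemma's threshold is exactly $3\sigma^2(t\sqrt{|S|}+d+\log(1/\delta))$, the constant matters. The paper instead cites the sub-Gaussian quadratic-form tail of Hsu, Kakade and Zhang, $\Prob\{\|w_i\|^2\ge\sigma^2(d+2\sqrt{du}+2u)\}\le e^{-u}$, which does give $\|w_i\|^2\le 3\sigma^2(d+\log(1/\delta))$ with probability $1-\delta$; you need that (or Hanson--Wright) rather than a crude covering argument.
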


\begin{lemma} \label{lm:tech3} 
\begin{equation} \label{eq:tech3}
\|W_{T_h^*}\| \le 3 \sigma \sqrt{(d+\log n)|T_h^*|}  \;\; \text{ for all } h \in [k]
\end{equation}
with probability greater than $1-n^{-3}$.
\end{lemma}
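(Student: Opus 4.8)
The plan is to fix a single cluster index $h \in [k]$, establish the bound for $W_{T_h^*}$ with failure probability at most $n^{-4}$, and then take a union bound over the at most $k \le n$ clusters, so that the total failure probability is at most $k\,n^{-4} \le n^{-3}$, yielding the claimed $1-n^{-3}$. Because the sets $T_h^*$ are deterministic (they are fixed by the true labels $z$, not by the data), fixing $h$ reduces the statement to a tail bound for the norm of a single sum of independent sub-Gaussian vectors, rather than the supremum over all $2^n$ subsets that forces the $\sqrt{n}$ factor in Lemma \ref{lm:tech1.1}.

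First I would record that $W_{T_h^*} = \sum_{i \in T_h^*} w_i$ is itself sub-Gaussian: by independence and (\ref{eq:subgaussian}), for every $a \in \mathbb{R}^d$ we have $\E e^{\iprod{a}{W_{T_h^*}}} = \prod_{i \in T_h^*}\E e^{\iprod{a}{w_i}} \le e^{n_h^*\sigma^2\|a\|^2/2}$, so $W_{T_h^*}$ has sub-Gaussian parameter $\sigma\sqrt{n_h^*}$. In particular, for any fixed unit vector $u$, the scalar $\iprod{u}{W_{T_h^*}}$ is sub-Gaussian with parameter $\sigma\sqrt{n_h^*}$, and Chernoff's bound gives $\Prob\{\iprod{u}{W_{T_h^*}} \ge s\} \le \exp(-s^2/(2n_h^*\sigma^2))$.

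To pass from directional control to the norm, I would introduce an $\epsilon$-net $\mathcal{N}_\epsilon$ of the unit sphere $S^{d-1}$ with $|\mathcal{N}_\epsilon| \le (1+2/\epsilon)^d$, for which $\|W_{T_h^*}\| \le (1-\epsilon)^{-1}\max_{u \in \mathcal{N}_\epsilon}\iprod{u}{W_{T_h^*}}$. Taking a union bound over $\mathcal{N}_\epsilon$ and setting the threshold $s = 3(1-\epsilon)\sigma\sqrt{(d+\log n)\,n_h^*}$ bounds the fixed-$h$ failure probability by
\begin{equation*}
\exp\left[d\log(1+2/\epsilon) - \frac{9(1-\epsilon)^2}{2}(d+\log n)\right].
\end{equation*}
Choosing a sufficiently fine net, e.g. $\epsilon = 1/20$, makes the coefficient of $d$ negative (since $\log 41 < 4.06$) and the coefficient of $\log n$ at most $-4$ (since $\tfrac{9(0.95)^2}{2} > 4$), so this probability is at most $n^{-4}$; the union over $h$ then completes the argument.

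The main obstacle is the constant chasing in the last step: the entropy term $d\log(1+2/\epsilon)$ coming from the covering number must be dominated by the sub-Gaussian exponent while the approximation factor $(1-\epsilon)^{-1}$ stays close enough to $1$ to keep the leading constant at exactly $3$, and at the same time the coefficient of $\log n$ must remain at least $4$ so that the union over the $k\le n$ clusters preserves the $n^{-3}$ probability. These competing requirements force $\epsilon$ into a narrow range (roughly $\epsilon \lesssim 0.057$), which is the only genuinely delicate point; everything else is the routine sub-Gaussian moment-generating-function and net machinery already used to establish Lemmas \ref{lm:tech1.1}--\ref{lm:tech1.3}.
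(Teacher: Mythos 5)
Your proof is correct, but it takes a different route from the paper's. The paper proves Lemma~\ref{lm:tech3} in two lines by invoking the quadratic-form tail inequality of Hsu--Kakade--Zhang (\cite[Theorem 2.1]{hsu2012tail}, recorded as (\ref{eq:techkey}) in the proof of Lemma~\ref{lm:tech1.1}): applying it with $t=4\log n$ to the sub-Gaussian vector $W_{T_h^*}$ gives $\|W_{T_h^*}\|^2 \le |T_h^*|\,(d+4\sqrt{d\log n}+8\log n)$ with probability at least $1-n^{-4}$, and then the same union bound over $h\in[k]$ with $k\le n$ that you use. You instead replace that single citation by the elementary one-dimensional Chernoff bound in a fixed direction together with a $(1+2/\epsilon)^d$ net of the sphere and the standard $\|x\|\le(1-\epsilon)^{-1}\max_{u\in\mathcal{N}_\epsilon}\iprod{u}{x}$ comparison -- the same machinery the paper itself deploys for the operator-norm bounds in Lemmas~\ref{lm:tech6} and~\ref{lm:tech1.2}. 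Both arguments rest on the identical structural observation, which you state explicitly: the sets $T_h^*$ are deterministic, so one only pays a union bound over $k\le n$ clusters rather than over all $2^n$ subsets as in Lemma~\ref{lm:tech1.1}. Your version is self-contained and makes the role of the constant $3$ transparent (it must absorb both the entropy term $d\log(1+2/\epsilon)$ and the loss $(1-\epsilon)^{-1}$ while keeping the $\log n$ exponent below $-4$, whence the narrow admissible range of $\epsilon$ you identify); your numerical check at $\epsilon=1/20$ is accurate ($\log 41\approx 3.71<\tfrac{9(0.95)^2}{2}\approx 4.06$). The paper's version is shorter and dimension-free in its bookkeeping, at the cost of importing an external concentration theorem. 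Either proof is acceptable here.
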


\begin{lemma} \label{lm:tech2}
For any fixed $\theta_1, \cdots, \theta_k \in \mathbb{R}^d$ and $a>0$, we have
\begin{equation}\label{eq:tech2}
\sum_{i \in T_g^*} \mathbb{I}\left\{ a\|\theta_h-\theta_g\|^2 \le \iprod{w_i}{\|\theta_h-\theta_g\|} \right\}
\le n_g^* \exp \left( - \frac{a^2 \Delta^2}{2\sigma^2} \right) + \sqrt{5 n_g^* \log n}
\end{equation}
for all $g \neq h \in [k]^2$ with probability greater than $1-n^{-3}$.
\end{lemma}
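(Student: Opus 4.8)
The plan is to read the left-hand side, for each fixed ordered pair $g \neq h$, as a sum of independent Bernoulli indicators, bound its expectation by a sub-Gaussian tail estimate, and then control the fluctuation around the mean by Hoeffding's inequality, finishing with a union bound over the at most $k(k-1)$ pairs. (Here I read $\iprod{w_i}{\theta_h-\theta_g}$ as the inner product of $w_i$ with the fixed vector $\theta_h-\theta_g$, which is what makes the inequality dimensionally sensible.)

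First I would fix $g \neq h$ and set $X_i = \mathbb{I}\{ a\|\theta_h-\theta_g\|^2 \le \iprod{w_i}{\theta_h-\theta_g}\}$ for $i \in T_g^*$. Because $\{w_i\}$ are independent and $T_g^* = \{i : z_i = g\}$ is nonrandom, the $X_i$ for $i \in T_g^*$ are independent. For a fixed $i$, plugging $a = \lambda(\theta_h-\theta_g)$ into the sub-Gaussian assumption (\ref{eq:subgaussian}) shows that $\iprod{w_i}{\theta_h-\theta_g}$ is a mean-zero sub-Gaussian scalar with parameter $\sigma\|\theta_h-\theta_g\|$, so a Chernoff bound gives
\[
\E X_i = \Prob\left\{ \iprod{w_i}{\theta_h-\theta_g} \ge a\|\theta_h-\theta_g\|^2 \right\} \le \exp\left( -\frac{a^2 \|\theta_h-\theta_g\|^2}{2\sigma^2} \right) \le \exp\left( -\frac{a^2 \Delta^2}{2\sigma^2} \right),
\]
where the last inequality uses $\|\theta_h-\theta_g\| \ge \Delta$ from the definition of $\Delta$. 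Hence $\sum_{i \in T_g^*} X_i$ has mean at most $n_g^* \exp(-a^2\Delta^2/(2\sigma^2))$, which is exactly the leading term on the right-hand side of (\ref{eq:tech2}).

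Next I would apply Hoeffding's inequality to the bounded independent sum $\sum_{i \in T_g^*}(X_i - \E X_i)$, whose summands lie in $[-1,1]$-range intervals of width $1$. Taking deviation $t = \sqrt{5 n_g^* \log n}$ makes the exponent $2t^2/n_g^* = 10\log n$, so for the fixed pair $(g,h)$ the event that $\sum_{i \in T_g^*} X_i$ exceeds $n_g^* \exp(-a^2\Delta^2/(2\sigma^2)) + \sqrt{5 n_g^* \log n}$ has probability at most $n^{-10}$. Since $k \le n$, there are at most $k(k-1) \le n^2$ ordered pairs, so a union bound yields the stated bound with failure probability at most $n^{2}\cdot n^{-10} = n^{-8} \le n^{-3}$ (the case $n_g^* = 0$ is trivial, as the sum is empty and the right-hand side is nonnegative).

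This lemma is essentially a routine concentration estimate, so there is no genuine obstacle; the only thing requiring care is the bookkeeping of constants. The factor $5$ in $\sqrt{5 n_g^*\log n}$ is chosen precisely so that the per-pair tail exponent $10\log n$ comfortably dominates $\log(k^2) \le 2\log n$, leaving enough slack to absorb the union bound into the advertised $1 - n^{-3}$ probability; any smaller constant would tighten this margin and force a more delicate accounting of the number of pairs.
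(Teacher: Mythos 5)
Your proof is correct and follows essentially the same route as the paper's: treat the indicators as independent Bernoullis, bound $\E X_i$ by a Chernoff/sub-Gaussian tail estimate (then lower-bound $\|\theta_h-\theta_g\|$ by $\Delta$), control the fluctuation by Hoeffding, and union bound over the at most $k^2 \le n^2$ pairs. Your reading of the typo $\iprod{w_i}{\|\theta_h-\theta_g\|}$ as $\iprod{w_i}{\theta_h-\theta_g}$ is also the intended one.
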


\subsection{Two Key Lemmas}
The following two lemmas give the iterative relationship between the error of estimating centers and the error of estimating labels. Let $\mathcal{E}$ be the intersection of high probability events in Lemma \ref{lm:tech1.1}, Lemma \ref{lm:tech1.2} Lemma \ref{lm:tech3}, Lemma \ref{lm:tech2} and the initialization condition (\ref{eq:kcond}).  Then we have $\mathbb{P}\{\mathcal{E}^c\} \le 3n^{-3}+\nu$. In the rest part of the proof, if not otherwise stated, we all condition on the event $\mathcal{E}$ and the following analysis are deterministic. 

\begin{lemma} \label{lm:center}
On event $\mathcal{E}$, if $G_s \le \frac{1}{2}$, then we have
\begin{equation} \label{eq:centerineq}
\Lambda_s \le  \frac{3}{r} + \min \left\{ \frac{3}{r} \sqrt{kG_s} + 2G_s \Lambda_{s-1},  \lambda G_s \right\}.
\end{equation}
\end{lemma}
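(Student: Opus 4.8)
The plan is to start from the exact decomposition of the center estimate and then control the two resulting pieces—a bias from mislabeled points and a stochastic term—bounding the bias in two complementary ways so that the stated minimum emerges. Since the center update averages the data assigned to cluster $h$, writing $y_i=\theta_{z_i}+w_i$ and collecting the labels gives the identity
\[
\hat{\theta}_h^{(s)}-\theta_h \;=\; \frac{1}{n_h^{(s)}}\sum_{g\neq h} n_{gh}^{(s)}(\theta_g-\theta_h) \;+\; \frac{1}{n_h^{(s)}}W_{T_h^{(s)}} .
\]
First I would record the cluster-size lower bound implied by $G_s\le \tfrac12$: the second term in the definition of $G_s$ forces $\sum_{g\neq h}n_{hg}^{(s)}\le G_s n_h^*$, hence $n_h^{(s)}\ge n_{hh}^{(s)}\ge (1-G_s)n_h^*\ge n_h^*/2\ge \alpha n/2$. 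Applying Lemma \ref{lm:tech1.1} with $S=T_h^{(s)}$ then yields $\|W_{T_h^{(s)}}\|/n_h^{(s)}\le \sigma\sqrt{3(n+d)/n_h^{(s)}}\le \sigma\sqrt{6(1+d/n)/\alpha}$, which is at most $\tfrac{3}{r}\Delta$ because $\tfrac{\Delta}{r}=\sigma\sqrt{(1+kd/n)/\alpha}$ and $6(1+d/n)\le 9(1+kd/n)$ using $k\ge 1$. Dividing by $\Delta$, the stochastic term contributes the free $\tfrac{3}{r}$ standing outside the minimum.

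For the bias term $b_h=\tfrac{1}{n_h^{(s)}}\sum_{g\neq h}n_{gh}^{(s)}(\theta_g-\theta_h)$, the crude branch is immediate: by the triangle inequality and $\|\theta_g-\theta_h\|\le\lambda\Delta$,
\[
\|b_h\|\le \frac{\lambda\Delta}{n_h^{(s)}}\sum_{g\neq h}n_{gh}^{(s)}\le \lambda\Delta\,G_s ,
\]
since $\sum_{g\neq h}n_{gh}^{(s)}/n_h^{(s)}\le G_s$ by the first term of the definition of $G_s$; this gives the branch $\lambda G_s$. The refined branch is where the previous-step accuracy $\Lambda_{s-1}$ enters. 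I would split each center gap through the estimated centers as $\theta_g-\theta_h=\bigl(\hat{\theta}_g^{(s-1)}-\hat{\theta}_h^{(s-1)}\bigr)-\bigl(\hat{\theta}_g^{(s-1)}-\theta_g\bigr)+\bigl(\hat{\theta}_h^{(s-1)}-\theta_h\bigr)$. The last two fluctuations each have norm at most $\Lambda_{s-1}\Delta$ by definition of $\Lambda_{s-1}$, and summed against the weights $n_{gh}^{(s)}$ they produce at most $2G_s\Lambda_{s-1}\Delta$ after dividing by $n_h^{(s)}$. The estimated-gap piece is handled using the optimality of the label update: any $i\in S_{gh}^{(s)}$ was assigned to $h$, so $\|y_i-\hat{\theta}_h^{(s-1)}\|^2\le\|y_i-\hat{\theta}_g^{(s-1)}\|^2$; projecting this halfspace condition onto $\hat{\theta}_g^{(s-1)}-\hat{\theta}_h^{(s-1)}$ and summing over $i\in S_{gh}^{(s)}$ bounds $n_{gh}^{(s)}\|\hat{\theta}_g^{(s-1)}-\hat{\theta}_h^{(s-1)}\|$ by $\|W_{S_{gh}^{(s)}}\|$ plus an $n_{gh}^{(s)}\Lambda_{s-1}\Delta$ correction. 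Finally a Cauchy–Schwarz over the at most $k$ source clusters together with Lemma \ref{lm:tech1.1} gives $\sum_{g\neq h}\|W_{S_{gh}^{(s)}}\|\le \sigma\sqrt{3(n+d)}\,\sqrt{k}\,\sqrt{\sum_{g\neq h}n_{gh}^{(s)}}\le \sigma\sqrt{3(n+d)k\,G_s n_h^{(s)}}$, which after dividing by $n_h^{(s)}$ is at most $\tfrac{3}{r}\sqrt{kG_s}\,\Delta$; this is exactly the source of the $\sqrt{kG_s}$ factor. Combining the two bias bounds with the stochastic term and normalizing by $\Delta$ yields $\Lambda_s\le \tfrac{3}{r}+\min\{\tfrac{3}{r}\sqrt{kG_s}+2G_s\Lambda_{s-1},\,\lambda G_s\}$.

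The main obstacle is the refined branch, specifically controlling the aggregate direction of the misclassification bias. A naive triangle inequality on the residuals $y_i-\hat{\theta}_h^{(s-1)}$ of the mislabeled points leads to a sum of individual noise norms $\sum_i\|w_i\|$, which the concentration lemmas cannot control (it scales like $\sqrt{|S|\,nd}\,\sigma$ and is far too large when $d$ is comparable to or larger than $n$). The resolution is to exploit the genuine vector cancellation encoded in the argmin optimality of the label step, converting each mislabeling into a constraint on the noise projection and then invoking the norm-of-sum bound of Lemma \ref{lm:tech1.1} on the sets $S_{gh}^{(s)}$ rather than summing norms pointwise. The remaining delicate point is the constant bookkeeping: one must verify that the noise contributed by the mislabeled points fits inside the $\tfrac{3}{r}\sqrt{kG_s}$ budget and that the various $\Lambda_{s-1}$ corrections collapse to the stated factor $2$, both of which rely on $k\ge 1$ and on the size bound $n_h^{(s)}\ge\alpha n/2$ furnished by the hypothesis $G_s\le\tfrac12$.
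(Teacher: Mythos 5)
Your overall architecture matches the paper's: the same exact decomposition $\hat{\theta}_h^{(s)}-\theta_h=\frac{1}{n_h^{(s)}}\sum_{g\neq h}n_{gh}^{(s)}(\theta_g-\theta_h)+\frac{1}{n_h^{(s)}}W_{T_h^{(s)}}$ for the $\lambda G_s$ branch, the same cluster-size bound $n_h^{(s)}\ge(1-G_s)n_h^*\ge\alpha n/2$, the same Cauchy--Schwarz over the $k$ source clusters producing $\sqrt{kG_s}$, and the same use of the argmin optimality of the label step to tie the mislabeled points to $\Lambda_{s-1}$. Where you genuinely diverge is in how that optimality is exploited. The paper observes that the halfspace $\{x:\|x-\hat\theta_h^{(s-1)}\|\le\|x-\hat\theta_a^{(s-1)}\|\}$ is convex, so the group average $\bar{Y}_{S_{ah}}$ lies in it, and then chains two triangle inequalities through $\hat\theta_a^{(s-1)}$ and $\theta_a$ to get $\|\bar{Y}_{S_{ah}}-\theta_h\|\le\|W_{S_{ah}}\|/n_{ah}+2\Lambda_{s-1}\Delta$ with no loss. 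You instead project the per-point inequality onto $\hat\theta_g^{(s-1)}-\hat\theta_h^{(s-1)}$ and sum; carrying out that computation gives $\tfrac12 n_{gh}\|u\|^2\le n_{gh}\Lambda_{s-1}\Delta\|u\|+\|W_{S_{gh}}\|\,\|u\|$, i.e.\ $n_{gh}\|u\|\le 2\|W_{S_{gh}}\|+2n_{gh}\Lambda_{s-1}\Delta$, and after adding the further $2\Lambda_{s-1}\Delta$ needed to pass from $\|u\|$ back to $\|\theta_g-\theta_h\|$ you land at $4G_s\Lambda_{s-1}$ and roughly $2\sqrt{6}\,r^{-1}\sqrt{kG_s}$ rather than the stated $2G_s\Lambda_{s-1}$ and $3r^{-1}\sqrt{kG_s}$.

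This constant loss is not cosmetic. The coefficient $2$ on $G_s\Lambda_{s-1}$ is exactly what makes the downstream induction in the proof of Theorem \ref{thm:kmeans} close: with $G_s\le 0.35$ one gets $2G_s\Lambda_{s-1}\le 0.7\Lambda_{s-1}$, a contraction, whereas $4G_s\Lambda_{s-1}\le 1.4\Lambda_{s-1}$ is not. So as executed your refined branch proves a weaker inequality than \eqref{eq:centerineq} and would not support the rest of the argument without modification. The fix is essentially to adopt the paper's route for this one step: work with the vector $\bar{Y}_{S_{gh}}-\theta_h$ directly, use convexity of the halfspace to get $\|\bar{Y}_{S_{gh}}-\hat\theta_h^{(s-1)}\|\le\|\bar{Y}_{S_{gh}}-\hat\theta_g^{(s-1)}\|$, and triangulate through $\theta_g$; this yields the per-group bound $\|W_{S_{gh}}\|/n_{gh}+2\Lambda_{s-1}\Delta$ with no doubling of either term. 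Everything else in your proposal (the stochastic $3/r$ term, the $\lambda G_s$ branch, the size bound, and your correct diagnosis that a pointwise triangle inequality on $\sum_i\|w_i\|$ would be fatally lossy) is sound and matches the paper.
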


\begin{lemma} \label{lm:labelupdate}
On event $\mathcal{E}$, if $\Lambda_s \le \frac{1-\epsilon}{2}$ and $r \ge 36 \epsilon^{-2}$, then
\begin{equation} \label{eq:labelineq}
G_{s+1} \le \frac{2}{\epsilon^4 r^2} + \left(\frac{28}{\epsilon^2 r} \Lambda_s\right)^2 + \sqrt{\frac{5k \log n}{\alpha^2 n}}.
\end{equation}
\end{lemma}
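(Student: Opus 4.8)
The plan is to fix a cluster index $h$ and to bound, on the event $\mathcal{E}$, each of the two ratios inside $G_{s+1}$ by reducing them to counting how many points the label update \eqref{eq:labelupdate} sends across a Voronoi boundary. Take a point $i$ with true label $z_i$ and a competing center $\hat\theta_g^{(s)}$, and write $y_i=\theta_{z_i}+w_i$ and $\Delta_g=\hat\theta_g^{(s)}-\theta_g$. The relabeling of $i$ to $g$ forces $\|y_i-\hat\theta_g^{(s)}\|^2\le\|y_i-\hat\theta_{z_i}^{(s)}\|^2$; expanding and using $\|\Delta_g\|\le\Lambda_s\Delta$ together with $\|\theta_g-\theta_{z_i}\|\ge\Delta$, the deterministic part of this difference is at least $\|\theta_g-\theta_{z_i}\|\bigl(\|\theta_g-\theta_{z_i}\|-2\Lambda_s\Delta\bigr)\ge\epsilon\|\theta_g-\theta_{z_i}\|^2$, where the last step invokes $\Lambda_s\le(1-\epsilon)/2$. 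Hence a misclassification implies the noise event $\iprod{w_i}{\hat\theta_g^{(s)}-\hat\theta_{z_i}^{(s)}}\ge\tfrac{\epsilon}{2}\|\theta_g-\theta_{z_i}\|^2$, the $k$-cluster analogue of \eqref{eq:two.error.label}; the hypothesis $r\ge36\epsilon^{-2}$ is what makes the margin term dominate the auxiliary cross terms, exactly as in the two-mixture argument.

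Next I would split the inner product along $\hat\theta_g^{(s)}-\hat\theta_{z_i}^{(s)}=(\theta_g-\theta_{z_i})+(\Delta_g-\Delta_{z_i})$ and split the indicator accordingly into a signal term $\indc{\iprod{w_i}{\theta_g-\theta_{z_i}}\ge\kappa\|\theta_g-\theta_{z_i}\|^2}$ and a center-perturbation term $\indc{\iprod{w_i}{\Delta_g-\Delta_{z_i}}\ge(\tfrac\epsilon2-\kappa)\|\theta_g-\theta_{z_i}\|^2}$, with $\kappa\asymp\epsilon$. The signal term is handled by the counting bound Lemma \ref{lm:tech2}: summing over the at most $k$ competitors $g$ and dividing by the cluster size, it contributes $k\exp\!\bigl(-c\,\epsilon^2\Delta^2/\sigma^2\bigr)$ plus a fluctuation of order $\sqrt{5k^2\log n/(\alpha n)}$. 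Since $\Delta^2/\sigma^2\ge r^2/\alpha\ge kr^2$ and $r\ge36\epsilon^{-2}$, the exponential factor is far smaller than $1/(\epsilon^4 r^2)$, yielding the first term $2/(\epsilon^4 r^2)$; and because $\alpha\le 1/k$ one has $\sqrt{5k^2\log n/(\alpha n)}\le\sqrt{5k\log n/(\alpha^2 n)}$, which is the third term.

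The center-perturbation term is where the work lies. Using $\indc{b\ge c}\le b^2/c^2$ with $c\asymp\epsilon\Delta^2$, it becomes $\iprod{w_i}{\Delta_g-\Delta_{z_i}}^2/(\epsilon\Delta^2)^2$ up to constants; summing over $i$ and invoking the uniform operator-norm bound Lemma \ref{lm:tech1.2} gives $\sum_i\iprod{w_i}{v}^2\le\|v\|^2\,\lambda_{\max}\!\bigl(\sum_i w_iw_i'\bigr)\le 6\sigma^2(n+d)\|v\|^2$ for every direction $v$. With $\|\Delta_g-\Delta_{z_i}\|\le2\Lambda_s\Delta$ and the bound $\sigma^2(n+d)/\Delta^2\le\alpha n/r^2$ (from the definition of $r$ and $n+kd\ge n+d$), dividing by a cluster size of order $\alpha n$ produces, for a single competing direction, a bound of order $\Lambda_s^2/(\epsilon^2 r^2)$; aggregating over competitors and collecting constants is what should leave $(28\Lambda_s/(\epsilon^2 r))^2$. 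For the false-positive ratio I would additionally lower bound its denominator $n_h^{(s+1)}\ge n_h^*-\sum_{g\neq h}n_{hg}^{(s+1)}\gtrsim\alpha n$, which is legitimate once the true-negative ratio has been shown to lie below $1/2$.

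The main obstacle is this center-perturbation term, for two linked reasons. First, $\Delta_g$ is a data-dependent object built from the same noise $\{w_i\}$ that appears in $\iprod{w_i}{\Delta_g-\Delta_{z_i}}$, so one cannot treat the direction as fixed and independent; the device that rescues the argument is precisely the uniform-over-all-directions operator-norm control of Lemma \ref{lm:tech1.2}, which bounds the quadratic form simultaneously for every direction, hence for the realized data-dependent one. Second, the crude union bound over the $k$ competing clusters is harmless for the signal term (it is swallowed by the exponentially small and $\sqrt{k}$ fluctuation pieces) but is fatal here: an extra factor $k$ in the $\Lambda_s^2$ term would degrade the required signal-to-noise condition from $r_k\gtrsim\sqrt{k}$ to $r_k\gtrsim k$. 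Controlling the several center-error directions $\{\Delta_g-\Delta_{z_i}\}$ without paying this factor---the $k$-cluster counterpart of the single fixed direction $2R-\bar{w}-2A_s\theta^*$ that made \eqref{eq:twoI2bound} clean in the two-mixture proof---is the delicate step and the place where the paper's technique for random indicator functions is needed. Once it is in place, choosing $\kappa$ a suitable multiple of $\epsilon$ and tallying constants yields \eqref{eq:labelineq}.
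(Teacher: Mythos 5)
Your route is the same as the paper's in every essential respect: the margin bound from $\Lambda_s\le(1-\epsilon)/2$ via the triangle inequality, the split of $\hat\theta_h^{(s)}-\hat\theta_g^{(s)}$ into $(\theta_h-\theta_g)+(\Delta_h-\Delta_g)$, Lemma \ref{lm:tech2} for the signal indicator (with exactly the $k\exp(\cdot)+\sqrt{5k^2\log n/(\alpha n)}$ accounting you describe), the indicator-to-square step combined with the uniform operator-norm bound of Lemma \ref{lm:tech1.2} to handle the data-dependent directions $\Delta_h-\Delta_g$, and the lower bound $n_h^{(s+1)}\gtrsim\alpha n$ obtained only after the true-negative ratio has been shown to be below $1/2$. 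All of this is correct and matches the paper's proof.

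The one step you leave open --- the aggregation over the $k$ competitors of the $\Lambda_s^2$ term, which you call ``the delicate step'' requiring some unspecified technique --- does not in fact require anything beyond the naive union bound, and the paper simply pays it. After Markov and Lemma \ref{lm:tech1.2}, each competitor $h$ contributes at most $C(n_g^*+d)\sigma^2\Lambda_s^2/(\epsilon^4\Delta^2 n_g^*)$ to $\sum_{h\neq g}n_{gh}^{(s+1)}/n_g^*$; summing over the $k-1$ competitors costs a factor $k-1$, but since $n_g^*\ge\alpha n$ and $k\le 1/\alpha$,
\[ (k-1)\,\frac{n_g^*+d}{n_g^*}\;\le\; k+\frac{kd}{\alpha n}\;\le\;\frac{n+kd}{\alpha n}, \]
so that $(k-1)(n_g^*+d)\sigma^2/(\Delta^2 n_g^*)\le\sigma^2(1+kd/n)/(\alpha\Delta^2)=r_k^{-2}$. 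The factor $k$ is absorbed precisely because $r_k$ is normalized by $\sqrt{\alpha}\le 1/\sqrt{k}$, so the union bound over competitors does not degrade the condition $r_k\gtrsim\sqrt{k}$; no refined simultaneous control of the several directions $\{\Delta_h-\Delta_g\}$ is needed, and the ``random indicator'' device you allude to is what handles the signal term (Lemma \ref{lm:tech2} and its uniform-over-thresholds analogues), not this one. With that observation your constants tally exactly as in (\ref{eq:Gsterm1}) and (\ref{eq:Gsterm2}), and the argument closes.
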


\begin{proof}[Proof of Lemma \ref{lm:center}]
For any $B \subseteq [n]$, define $\bar{Y}_{B} = \frac{1}{|B|} \sum_{i \in B} y_i$. The error of estimated centers at step $s$ can be written as
\begin{eqnarray*}
\thetas_h - \theta_h &=& \frac{1}{n_h} \sum_{i \in S_{hh}} (y_i - \theta_h) + \frac{1}{n_h} \sum_{a \neq h} \sum_{i \in S_{ah}} (y_i - \theta_h) \\
&=& \frac{1}{n_h} \sum_{i \in S_{hh}} w_i + \sum_{a \neq h} \frac{n_{ah}}{n_h} \left( \bar{Y}_{S_{ah}} - \theta_h \right) 
\end{eqnarray*}
According to our label update step, we have $\|y_i - \hat{\theta}^{(s-1)}_h\| \le \|y_i - \hat{\theta}^{(s-1)}_a\|$ for any $i \in S_{ah}$. This means for any $i \in S_{ah}$, $y_i$ is closer to $\hat{\theta}^{(s-1)}_h$ than $\hat{\theta}^{(s-1)}_a$, so is the average of $\{y_i, i \in S_{ah}\}$. Thus, we have
$$\| \bar{Y}_{S_{ah}}- \hat{\theta}^{(s-1)}_h \| \le  \| \bar{Y}_{S_{ah}}- \hat{\theta}^{(s-1)}_a \|.$$  Consequently, triangle inequality gives us
\[ \left\| \bar{Y}_{S_{ah}} - \theta_h \right\| \le \left\| \bar{Y}_{S_{ah}} - \theta_a \right \| + \| \hat{\theta}^{(s-1)}_a - \theta_a \| +  \| \hat{\theta}^{(s-1)}_h - \theta_h \|,   \]
which, combined with Lemma \ref{lm:tech1.1} and the definition of $\Lambda_{s-1}$, yields
\[  \left\| \bar{Y}_{S_{ah}} - \theta_h \right\|  \le \sigma \sqrt{3(n+d)/n_{ah}} + 2 \Lambda_{s-1} \Delta.  \]
Taking a weighted sum over $a \neq h \in [k]$, we get 
\begin{eqnarray*}
\sum_{a \neq h} \frac{n_{ah}}{n_h} \left\| \bar{Y}_{S_{ah}} - \theta_h \right \|
&\le& \frac{\sigma \sqrt{3(n+d)}}{n_h} \sum_{a \neq h} \sqrt{ n_{ah}} + 2\Lambda_{s-1} \Delta \sum_{a \neq h} \frac{n_{ah}}{n_h} \\
&\le& \frac{\sigma \sqrt{3(n+d)}}{ \sqrt{n_h}} \sqrt{(k-1) G_s} + 2 G_s \Lambda_{s-1} \Delta,
\end{eqnarray*}
where the Last inequality is due to Cauchy-Schwartz and the fact that $\sum_{a \neq h} n_{ah} \le G_s n_h $. Note that $W_{S_{hh}} = W_{T_h^*} - \sum_{a \neq h} W_{S_{ha}}$. Triangle inequality and Lemma \ref{lm:tech2} imply
\begin{eqnarray*}
\left \| W_{S_{hh}} \right\| 
&\le& 3\sigma \sqrt{d + \log n} \sqrt{n_h^*} + \sigma \sqrt{3(n+d)} \sqrt{n_h^* - n_{hh}}.
\end{eqnarray*}
Since $G_s \le \frac{1}{2}$, we have
\begin{equation} \label{eq:boundnh}
n_h \ge n_{hh} \ge n_h^*(1-G_s) \ge \frac{1}{2} n_h^* \ge \frac{1}{2} \alpha n.
\end{equation}
Combining the pieces, we obtain
\begin{eqnarray}
\nonumber \left \| \thetas_h - \theta_h \right\| 
&\le& 3\sigma \sqrt{\frac{d+\log n}{\alpha n}} + 3 \sigma \sqrt{\frac{ k (n + d)}{\alpha n} G_s} + 2G_s \Lambda_{s-1} \Delta\\
\label{eq:centereq1} &\le& \left( \frac{3}{r} (1+\sqrt{kG_s}) + 2G_s \Lambda_{s-1} \right) \Delta.
\end{eqnarray}
Therefore, we get the first term in (\ref{eq:centerineq}). To prove the second term, we decompose $\thetas_h$ differently.
\begin{eqnarray}
\nonumber \thetas_h &=& \frac{1}{n_h} \sum_{i=1}^{n} \left( \theta_{z_i} + w_i \right) \mathbb{I} \left\{ \zs_i = h \right\} \\
\nonumber &=& \frac{1}{n_h} \sum_{a=1}^{k} \sum_{i=1}^{n} \theta_a \mathbb{I} \left\{ z_i=a, \zs_i = h \right\} + \frac{1}{n_h} \sum_{i \in T_h} w_i \\
\label{eq:thetah} &=& \sum_{a=1}^{k} \frac{n_{ah}}{n_h}  \theta_a  + \frac{1}{n_h} W_{T_h}.
\end{eqnarray}
Then, the error of $\thetas_h$ can be upper bounded as  
\begin{eqnarray*}
\left\| \thetas_h - \theta_h \right\| &=& \left\|  \sum_{a=1}^{k} \frac{n_{ah}}{n_h}  (\theta_a - \theta_h) + \frac{1}{n_h} W_{T_h} \right\|
\le  \left\|  \sum_{a \neq h} \frac{n_{ah}}{n_h}  (\theta_a - \theta_h) \right\| +  \left\| \frac{1}{n_h} W_{T_h} \right\|.
\end{eqnarray*}
By triangle inequality,
\begin{equation} \label{eq:aaa} 
\left\|  \sum_{a \neq h} \frac{n_{ah}}{n_h}  (\theta_a - \theta_h) \right\| \le \sum_{a \neq h} \frac{n_{ah}}{n_h} \left\|\theta_a - \theta_h\right\| \le \lambda \Delta \sum_{a \neq h} \frac{n_{ah}}{n_h} \le \lambda \Delta G_s.
\end{equation}
This, together with Lemma \ref{lm:tech1.1} and (\ref{eq:boundnh}), implies
\begin{equation} \label{eq:errorcenter}
\left\| \thetas_h - \theta_h \right\|  \le  \lambda \Delta G_s + \sigma \sqrt{\frac{3(n+d)}{n_h}} \le \left( \lambda G_s  + \frac{3}{r} \right) \Delta
\end{equation}
for all $h \in [k]$. The proof is complete.\\
\end{proof}

\begin{proof}[Proof of Lemma \ref{lm:labelupdate}]
For any $g \neq h \in [k] \times [k]$,
\begin{eqnarray} 
\nonumber \mathbb{I} \left\{ z_i = g, \zsnext_i= h \right\} &\le& \mathbb{I} \left\{ \| \theta_g + w_i - \thetas_h \|^2 \le \|\theta_g + w_i - \thetas_g\|^2 \right\} \\
\label{eq:errorgh} &=& \mathbb{I} \left\{ \| \theta_g - \thetas_h \|^2 - \|\theta_g - \thetas_g\|^2 \le 2 \iprod{w_i}{\thetas_h - \thetas_g} \right\}.
\end{eqnarray}
Triangle inequality implies
\begin{equation*}
\|\theta_g-\thetas_h\|^2 
\ge \left(\|\theta_g-\theta_h\| - \| \theta_h - \thetas_h \| \right)^2
\ge \left( 1 - \Lambda_s  \right)^2 \|\theta_g-\theta_h\|^2.
\end{equation*}
Using the fact that $(1-x)^2-y^2 \ge (1-x-y)^2$ when $y(1-x-y) \ge 0$, we obtain
\begin{equation}
\| \theta_g - \thetas_h \|^2 - \|\theta_g - \thetas_g\|^2 = \left( 1 - 2 \Lambda_s \right)^2 \|\theta_g-\theta_h\|^2 
 \label{eq:LHStesting} \ge \epsilon^2 \|\theta_g-\theta_h\|^2.
\end{equation}
Denote by $\Delta_h = \hat{\theta}_h^{(s)} - \theta_h$ for $h \in [k]$. Then,
\begin{eqnarray} 
\nonumber && \mathbb{I} \left\{ z_i = g, \zsnext_i= h \right\}  \\
\nonumber &\le& \mathbb{I} \left\{ \epsilon^2 \|\theta_g - \theta_h\|^2 \le 2 \iprod{w_i}{\theta_h-\theta_g+ \Delta_h - \Delta_g} \right\} \\
\nonumber \label{eq:errordecomp} &\le& \mathbb{I} \left\{\frac{\epsilon^2}{2}\|\theta_g - \theta_h\|^2 \le 2 \iprod{w_i}{\theta_h-\theta_g}\right\} +  \mathbb{I} \left\{ \frac{\epsilon^2}{2}\Delta^2 \le 2 \iprod{w_i}{\Delta_h - \Delta_g}\right\}.
\end{eqnarray}
Taking a sum over $i \in T_g^*$ and using Markov's inequality on the second term, we obtain
\begin{equation}
\label{eq:nghbound} n_{gh}^{(s+1)} \le \sum_{i \in T_g^*}  \mathbb{I} \left\{\frac{\epsilon^2}{4}\|\theta_g - \theta_h\|^2 \le \iprod{w_i}{\theta_h-\theta_g}\right\}
+ \sum_{i \in T_g^*} \frac{16}{\epsilon^4 \Delta^4} \left( w_i'(\Delta_h - \Delta_g) \right)^2
\end{equation}
Note that $\mathbb{I} \left\{ \frac{\epsilon^2}{4} \|\theta_g - \theta_h\|^2 \le \iprod{w_i}{\theta_h-\theta_g}\right\}$ are independent Bernoulli random variables. By Lemma \ref{lm:tech2}, the first term in RHS of (\ref{eq:nghbound}) can be upper bounded by
\begin{equation} \label{eq:l1} 
n_g^* \exp \left( - \frac{\epsilon^4 \Delta^2}{32\sigma^2} \right) + \sqrt{5 n_g^* \log n}.
\end{equation}
By Lemma \ref{lm:tech1.2}, the second term in RHS of (\ref{eq:nghbound}) can be upper bounded by
\begin{equation} \label{eq:l2}
\sum_{i \in T_g^*} \frac{16}{\epsilon^4 \Delta^4} \left( w_i'(\Delta_h - \Delta_g) \right)^2 \le \frac{96 (n_g^*+d)
\sigma^2}{\epsilon^4 \Delta^4} \|\Delta_g - \Delta_h\|^2. 
\end{equation}
Combining (\ref{eq:nghbound}), (\ref{eq:l1}) and (\ref{eq:l2}) and using the fact that $\|\Delta_g - \Delta_h\|^2 \le 4 \Lambda_s^2 \Delta^2$, we get
\begin{equation*}
n_{gh}^{(s+1)} \le n_g^* \exp \left( - \frac{\epsilon^4 \Delta^2}{32\sigma^2} \right) + \sqrt{5n_g^* \log n} + \frac{384 (n_g^*+d)\sigma^2}{\epsilon^4 \Delta^2} \Lambda_s^2.
\end{equation*}
Consequently, 
\begin{equation} \label{eq:Gsterm1} 
\max_{g \in [k]}  \sum_{h \neq g} \frac{n_{gh}^{(s+1)}}{n_g^*}  \le k \exp \left( - \frac{\epsilon^4 \Delta^2}{32\sigma^2} \right)+ k \sqrt{\frac{5 \log n}{\alpha n}} + \frac{384}{\epsilon^4 r^2 } \Lambda_s^2.
\end{equation}
Since $\Lambda_s \le 1/2$ and $r \ge 20 \epsilon^{-2}$, the RHS of (\ref{eq:Gsterm1}) is smaller that $1/2$ when $ \alpha n \ge 32 k^2 \log n$. Thus, 
$$n_h^{(s+1)} \ge n_{hh}^{(s+1)} \ge \frac{1}{2}n_h^* \ge \frac{1}{2}\alpha n $$
for all $h \in [k]$ and we have
\begin{equation} \label{eq:Gsterm2}  
\max_{h \in [k]} \sum_{g \neq h} \frac{n_{gh}^{(s+1)}}{n_h^{(s+1)}}   \le \frac{2}{\alpha} \exp \left( - \frac{\epsilon^4 \Delta^2}{32\sigma^2} \right) + \sqrt{\frac{5 k \log n}{\alpha^2 n}} + \frac{768}{\epsilon^4 r^2} \Lambda_s^2, 
\end{equation}
which, together with (\ref{eq:Gsterm1}), implies
\[ G_{s+1} \le \exp \left( - \frac{\epsilon^4 \Delta^2}{32\sigma^2} + \log(2/\alpha) \right) + \sqrt{\frac{5k \log n}{\alpha^2 n}} + \frac{768}{\epsilon^4 r^2} \Lambda_s^2  \]
Under the assumptions that $\epsilon^4 \alpha \Delta^2/\sigma^2 \ge r^2 \epsilon^4 \ge 36$, we have the desired result (\ref{eq:labelineq}).
\end{proof}

\subsection{Proof of Theorem \ref{thm:kmeans}}
\begin{proof} [Proof of (\ref{eq:kconst}) in Theorem \ref{thm:kmeans}]
From Lemma \ref{lm:center}, a necessary condition for $\Lambda_0 \le \frac{1}{2} - \frac{4}{\sqrt{r}}$ is $G_0 \le (\frac{1}{2} - \frac{6}{\sqrt{r}})\frac{1}{\lambda}$. Setting $\epsilon=\frac{7}{\sqrt{r}}$ in Lemma \ref{lm:labelupdate}, we have $G_1 \le 0.35$. Plugging it into Lemma \ref{lm:center} gives us $\Lambda_{1} \le 0.4$, under the assumption that $r \ge 16\sqrt{k}$. Then it can be easily proved by induction that $G_{s} \le 0.35$ and $\Lambda_s \le 0.4$ for all $s \ge 1$. Consequently, Lemma \ref{lm:center} yields
\[ \Lambda_{s} \le \frac{3}{r} + \frac{3}{r} \sqrt{kG_s} + G_s \le \frac{1}{2} + G_s\]
which, combined with (\ref{eq:labelineq}), implies
\[ G_{s+1} \le \frac{C}{r^2} + \frac{C}{r^2} \left(\frac{1}{4}+2G_s+G_s^2 \right) + \sqrt{\frac{5k \log n}{\alpha^2 n}} \le \frac{2C}{r^2} + \frac{3C}{r^2} G_s  +  \sqrt{\frac{5k \log n}{\alpha^2 n}} \]
for some constant $C$. Here we have chosen $\epsilon=1/5$ in Lemma \ref{eq:labelineq} to get the first inequality. \end{proof}

\begin{proof}[Proof of (\ref{eq:efficiency}) in Theorem \ref{thm:kmeans}]
From the proof of Lemma \ref{lm:center}, the error of estimating $\theta_h$ at iteration $s$ can be written as $\hat{\theta}_h^{(s)} - \theta_h = \frac{1}{n_h} W_{T_h^*} + u_h$, with
\begin{equation} \label{eq:uhnorm} 
\|u_h\| \le \left( \frac{3}{r} \sqrt{kG_s} + G_s \right) \Delta \le \sqrt{G_s} \Delta
\end{equation}
In addition, by Lemma \ref{lm:center} and Lemma \ref{lm:labelupdate}, there is a constant $C_1$ such that
\[ \Lambda_{s} \le \frac{3}{r} + \sqrt{G_s} + 2G_s \Lambda_{s-1} \le \frac{C_1}{r} + \frac{C_1}{r} \Lambda_{s-1} + 0.7 \Lambda_{s-1} + \left(\frac{C_1k \log n}{ \alpha^2 n} \right)^{1/4} \]
for all $s \ge 1$. Therefore, when $r$ is large enough, we have $$\Lambda \le C_2 r^{-1} + C_2 \left(\frac{k \log n}{ \alpha^2 n} \right)^{1/4}$$ for all $s \ge \log n$. Then by (\ref{eq:LHStesting}), we have
\begin{equation*}
\mathbb{I} \left\{ z_i = g, \zsnext_i= h \right\} \le \mathbb{I} \left\{ \beta_1 \|\theta_g - \theta_h\|^2 \le 2 \iprod{w_i}{\theta_h-\theta_g+\Delta_h - \Delta_g} \right\}
\end{equation*}
where $\left(1 -  2 \Lambda_s \right)^2 \ge \beta_1 :=  1 - 4C_2 r^{-1} - 4C_2\left(\frac{k \log n}{ \alpha^2 n} \right)^{1/4}.$ \\

In order to prove that $A_{s}$ attains convergence rates (\ref{eq:efficiency}), we first upper bound the expectation of $A_s$ and then derive the high probability bound using Markov's inequality. Similar to the two-mixture case, we need to upper bound the inner product $\iprod{w_i}{\Delta_h - \Delta_g}$ more carefully.  Note that $\{T_h^*, h \in [k]\}$ are deterministic sets, we could use concentration equalities to upper bound $W_{T_h^*}$ and $u_h$ parts separately.

Let $v_{h} = \frac{1}{n_h} W_{T_h^*}$ for $h \in [k]$ and we decompose $\mathbb{I} \left\{ z_i = g, \zsnext_i= h \right\} $ into three terms. 
\begin{eqnarray*} 
\mathbb{I} \left\{ z_i = g, \zsnext_i= h \right\} &\le& \mathbb{I} \left\{ \beta \|\theta_g - \theta_h\|^2 \le 2 \iprod{w_i}{\theta_h-\theta_g}\right\} \\
&&+  \mathbb{I} \left\{ \beta_2 \Delta^2 \le 2 \iprod{w_i}{u_h - u_g}\right\} \\
&& + \mathbb{I} \left\{ \beta_4 \Delta^2 \le 2 \iprod{w_i}{v_h - v_g} \right\},
\end{eqnarray*}
where $\beta_2$ and $\beta_4$ will be specified later and $\beta = \beta_1 - \beta_2 - \beta_4$. Taking a sum over $h \in [k]$ and $i \in [n]$, we obtain \[ \E A_{s+1} \le  \E J_1 + \E J_2 + \E J_3 \] with
\begin{eqnarray}
\label{eq:J1} J_1 &=& \sum_{h \in [k]}  \frac{1}{n} \sum_{i=1}^{n} \mathbb{I} \left\{ \beta \|\theta_{z_i} - \theta_h\|^2 \le 2 \iprod{w_i}{\theta_h-\theta_{z_i}}\right\} \\
\label{eq:J2} J_2 &=& \sum_{h \in [k]}  \frac{1}{n} \sum_{i=1}^{n}  \mathbb{I} \left\{ \beta_2 \Delta^2 \le 2 \iprod{w_i}{u_h - u_{z_i}}\right\}.\\
\label{eq:J3} J_3 &=& \sum_{h \in [k]} \frac{1}{n} \sum_{i=1}^{n} \mathbb{I} \left\{ \beta_4 \Delta^2 \le 2 \iprod{w_i}{v_{z_i} - v_h}\right\}.
\end{eqnarray}

Let us first consider the expectation of $J_1$. Using Chernoff's bound, we have
\[ \Prob  \left\{ \beta \|\theta_g - \theta_h\|^2 \le 2 \iprod{w_i}{\theta_h-\theta_g}\right\} \le \exp \left( - \frac{\beta^2 \|\theta_h-\theta_g\|^2}{8\sigma^2} \right) \le \exp \left( - \frac{\beta^2 \Delta^2}{8\sigma^2} \right). \]
Thus,
\[ \E J_1 \le k \exp \left( - \frac{\beta^2 \Delta^2}{8\sigma^2} \right) = \exp \left( - \frac{\gamma \Delta^2}{8\sigma^2} \right), \]
with $\gamma = \beta^2 - \frac{8\sigma^2 \log k}{\Delta^2} \ge \beta^2 - 8/r^2$. \\

We use Markov Inequality to upper bound $J_2$. Markov's inequality and Lemma \ref{lm:tech1.2} give us
\begin{eqnarray*}
\frac{1}{n} \sum_{i=1}^{n} \mathbb{I} \left\{ \beta_2 \Delta^2 \le 2 \iprod{w_i}{u_h - u_{z_i}}\right\} 
&\le& \frac{4}{n \beta_2^2 \Delta^4} \sum_{g \in [k]} \sum_{i \in T_g^*} \left( w_i' (u_h - u_{g})\right)^2 \\
&\le& \frac{24\sigma^2}{n \beta_2^2 \Delta^4} \sum_{g \in [k]} (n_g^* +d) \|u_h - u_{g}\|^2. 
\end{eqnarray*}
(\ref{eq:uhnorm}) implies
\begin{eqnarray*}
J_2 \le \frac{96\sigma^2 G_s }{n \beta_2^2 \Delta^2} \sum_{h \in [k]} \sum_{g \in [k]} (n_g^* + d)
\le \frac{96\sigma^2 k (n+kd)}{ \alpha n \beta_2^2 \Delta^2} A_s = \frac{12\sqrt{k}}{r} A_s.
\end{eqnarray*}
Here the second inequality is due to the fact that $G_s \le A_s/ \alpha$. And we choose $\beta_2 = \sqrt{8k/r}$ in the last equality. \\


\noindent Finally, we upper bound the expectation of $J_3$. Given $z_i=g$, we have
\begin{eqnarray*}
&& \Prob \left\{ \beta_4 \Delta^2 \le 2 \iprod{w_i}{v_g - v_h}\right\} \\
&\le& \Prob \left\{ \frac{\beta_4}{4} \Delta^2 \le  \iprod{w_i}{v_g}\right\} + \Prob \left\{ - \frac{\beta_4}{4} \Delta^2 \ge  \iprod{w_i}{v_h}\right\} \\
&\le& \Prob \left\{ \frac{\beta_4}{8} \Delta^2 \le  \iprod{w_i}{\frac{1}{n_g^*}W_{T_g^*}}\right\} + \Prob \left\{ - \frac{\beta_4}{8} \Delta^2 \ge  \iprod{w_i}{\frac{1}{n_h^*}W_{T_h^*}}\right\} \
\end{eqnarray*}
Choosing $t=\max\{\frac{\sqrt{d} \Delta}{\sigma},  \frac{\Delta^2}{\sigma^2}\}$, $\delta = \exp\left(-\frac{\Delta^2}{4\sigma^2}\right)$ in Lemma \ref{lm:tech1.3}, and $$\beta_4 = \frac{64}{r} \ge \frac{8}{\Delta^2} \left( \frac{3 \max\{\sqrt{d} \sigma \Delta, \Delta^2\}}{\sqrt{\alpha n}} + \frac{3\sigma^2 d + \Delta^2}{\alpha n}  \right),$$
we obtain $\Prob \left\{ \beta_4 \Delta^2 \le 2 \iprod{w_i}{v_g - v_h}\right\} \le 2\exp(-\Delta^2/(4\sigma^2))$, where we have used the assumption that $n_g^* \ge \alpha n$ and $\alpha n \ge 36 r^2$. Thus, \[ \E J_3 \le 2 k \exp \left( - \frac{\Delta^2}{\sigma^2} \right), \]

\noindent Combining the pieces,  we have
\begin{eqnarray*} 
\E A_{s+1} &\le& \E \left[ J_1 \right] + \E \left[ J_2 \mathbb{I}\{ \mathcal{E} \} \right] + \E \left[ J_3 \right]  + \mathbb{P}\{\mathcal{E}^c\} \\
&\le&  \exp \left( - \frac{\gamma \Delta^2}{8\sigma^2} \right) + \frac{12\sqrt{k}}{r} \E A_s + 2 k \exp \left( - \frac{\Delta^2}{\sigma^2} \right), 
\end{eqnarray*}
with $\gamma = (\beta_1 - \sqrt{8k/r} - 64/r)^2 - 8/r^2 = 1 - o(1)$. Here only prove the case that $r \to \infty$. For the finite case, all the $o(1)$ in the following proof can be substituted by a small constant. 
\[ \E A_s \le \frac{1}{2^{s-\lceil \log r \rceil}} + 2 \exp \left( -(1-\eta) \frac{\Delta^2}{8\sigma^2} \right) + \frac{2}{n^3}
\le 2 \exp \left( -(1-\eta) \frac{\Delta^2}{8\sigma^2} \right) + \frac{3}{n^3} \]
when $s \ge 4 \log n$. By Markov's inequality, for any $t>0$,
\begin{equation} \label{eq:mrkv} 
\Prob \left\{ A_s \ge t \right\} \le \frac{1}{t} \E A_s \le \frac{2}{t} \exp \left( -(1-\eta) \frac{\Delta^2}{8\sigma^2} \right) + \frac{3}{n^3 t}. 
\end{equation}
If $(1-\eta)\frac{\Delta^2}{8\sigma^2} \le 2\log n$, choose $t = \exp \left( -(1-\eta-\frac{8\sigma}{\Delta}) \frac{\Delta^2}{8\sigma^2} \right)$ and we have
\[ \Prob \left\{ A_s \ge   \exp \left( -(1-\eta-\frac{8\sigma}{\Delta}) \frac{\Delta^2}{8\sigma^2} \right) \right\} \le \frac{4}{n} + 2 \exp\left( - \frac{\Delta}{\sigma} \right). \]
Otherwise, since $A_s$ only takes discrete values of $\{0, \frac{1}{n}, \cdots, 1\}$, choosing $t=\frac{1}{n}$ in (\ref{eq:mrkv}) leads to
\[ \Prob \left\{ A_s > 0 \right \} =  \Prob \left\{ A_s \ge \frac{1}{n} \right \} \le 2n \exp(-2\log n) +  \frac{3}{n^2} \le \frac{4}{n}. \] 
The proof is complete. 
\end{proof}

\subsection{Proof of Lower Bounds}
The key difficulty in proving the lower bound is to deal with infinitum of all label permutations. Here we adapt the proof idea from \cite{gao2016community}. We define a subset of the parameter space, in which a large portion of the labels in each cluster are fixed. Then any permutation other than identity gives us bigger mis-clustering rate. 
\begin{proof}[Proof of Theorem \ref{thm:lower_bound}]
For any $z \in [k]^n$, let us define $n_u(z) = \sum_{i=1}^{n} \mathbb{I}\{z_i = u\}$. Let $z^* \in [k]^n$ satisfying $n_1(z^*) \le n_2(z^*) \le \cdots, \le n_k(z^*)$ with $n_1(z^*) = n_2(z^*) = \lfloor \alpha n \rfloor$. It is easy to check the existence of $z^*$. For each $u \in [k]$, we choose a subset of $\{i: z^*(i)=u\}$ with cardinality $\lceil n_u(z^*) - \frac{\alpha n}{4k} \rceil$, denoted by $T_u$. Define $T = \cup_{u=1}^{k} T_u$ and
\begin{equation}
\mathcal{Z}^*  = \left\{ z \in \mathcal{Z}_0, z_i = z_i^* \textrm{ for all } i \in T  \right\}
\end{equation}
A key observation is that for any $z \neq \tilde{z} \in \mathcal{Z}^*$, we have $\frac{1}{n} \sum_{i=1}^{n} \mathbb{I}\{ z_i \neq \tilde{z}_i \} \le \frac{k}{n} \frac{\alpha n}{4k} = \frac{\alpha}{4}$ and
\begin{equation}
\frac{1}{n} \sum_{i=1}^{n} \mathbb{I}\{ \pi(z_i) \neq \tilde{z}_i \} \ge \frac{1}{n} (\alpha n - \frac{\alpha n }{4k}) \ge \frac{\alpha}{2}
\end{equation}
for all $\pi \in \mathcal{S}_k \neq \mathbb{I}_k$. Thus, $\ell(z, \tilde{z}) = \frac{1}{n} \sum_{i=1}^{n} \mathbb{I}\{z_i \neq \tilde{z}_i\}$ for all $z, \tilde{z} \in \mathcal{Z}^*$. Then following the same arguments in the proof of Theorem 2 in \cite{gao2016community}, we can obtain 
\begin{equation}{\label{eq:lower1}}
\inf_{\hat{z}} \sup_{z \in \mathcal{Z}} \E \ell(\hat{z},z) 
\ge \frac{\alpha}{6} \frac{1}{|T^c|} \sum_{i \in T^c} \left[ \frac{1}{2k^2} \inf_{\hat{z}_i} \left(\mathbb{P}_1\{\hat{z}_i = 2\} + \mathbb{P}_2\{\hat{z}_i=1\}\right) \right] 
\end{equation}
Here $\mathbb{P}_t, t \in \{1,2\}$ denote the probability distribution of our data given $z_i = t$. By Neyman-Pearson Lemma, the infimum of the right hand side of (\ref{eq:lower1}) is achieved by the likelihood ratio test 
\begin{eqnarray*}
\hat{z}_i = \argmin_{g \in \{1,2\}} \|y_i - \theta_g\|^2.
\end{eqnarray*}
Thus,
\begin{eqnarray*}
\inf_{\hat{z}_i} \left( \frac{1}{2} \mathbb{P}_1 \left\{ \hat{z}_i = 2 \right\} + \frac{1}{2} \mathbb{P}_2 \left\{ \hat{z}_i = 1 \right\} \right) &=& \mathbb{P} \left\{ \|\theta_1+w_i-\theta_2\|^2 \le  \|w_i\|^2\right\} \\
&=& \mathbb{P} \left\{ \|\theta_1-\theta_2\|^2  \le  2\iprod{w_i}{\theta_1-\theta_2} \right\}.
\end{eqnarray*}
Since $w_{ij}, j \in [d]$ are independent $\mathcal{N}(0,\sigma^2)$,  $\iprod{w_i}{\theta_1-\theta_2} \sim \mathcal{N}(0, \sigma^2 \|\theta_1-\theta_2\|^2)$. Let $\Phi(t)$ be the cumulative function of $\mathcal{N}(0,1)$ random variable. By calculating the derivatives, it can be easily proved that
\[  1 - \Phi(t) = \frac{1}{\sqrt{2\pi}} \int_{t}^{\infty} e^{-x^2/2} dx \ge \frac{1}{\sqrt{2\pi}} \frac{t}{t^2+1} e^{-t^2/2}. \]
Then when $\|\theta_1-\theta_2\| \ge \sigma$, we have
\[ \mathbb{P} \left\{ \iprod{w_i}{\theta_1-\theta_2} \ge \frac{1}{2} \|\theta_1-\theta_2\|^2   \right\} \ge \frac{\sigma}{\sqrt{2 \pi} \|\theta_1-\theta_2\|} \exp \left( - \frac{\|\theta_1-\theta_2\|^2}{8\sigma^2} \right). \]
Consequently,
\begin{eqnarray*}
\inf_{\hat{z}} \sup_{z \in \mathcal{Z}} \E \ell(\hat{z},z)  &\ge& \exp \left( - \frac{\Delta^2}{8\sigma^2} - 2\log \frac{2k \Delta}{\alpha \sigma} \right).
\end{eqnarray*}
The proof is complete.
\end{proof}
\subsection{A counterexample} \label{sec:counter}
Now we give a counterexample to show that the initialization condition in Theorem \ref{thm:kmeans} is almost necessary. Consider a noiseless case with 6 equal size clusters, as showed in Figure \ref{figure:A0}.  Suppose $\|\theta_i-\theta_j\| = \Delta$ for $i \neq j \in [3]$ and $\|\theta_{i+3} - \theta_i\| = \lambda \Delta$ for $i \in [3]$. We are given an initializer that mixes cluster $i$ and cluster $i+3$, $i \in [3]$, with $m/(2\lambda)$ data points from cluster $i+3$ and $m-m/(2\lambda)$ data points from cluster $i$ for some integer $m$. Consequently, the estimated (initialized) center $\hat{\theta}_i^{(0)}$ lies in the middle of two true centers. For the next label update step, since there are ties, we may assign half points at $\theta_1$ to cluster $\hat{\theta}_1^{(0)}$ and another half to cluster $\hat{\theta}_3^{(0)}$ and the estimated centers remain the same as before. Therefore, $\{\hat{\theta}_i^{(0)}, i \in [3]\}$ is a stationary point for Lloyd's algorithm and it may not convergence to the true centers. We would like to note that this counterexample is a worst case in theoretical analysis, which may not happen in practice. 

\begin{figure}[h]
\centering
\includegraphics[width=0.5\textwidth]{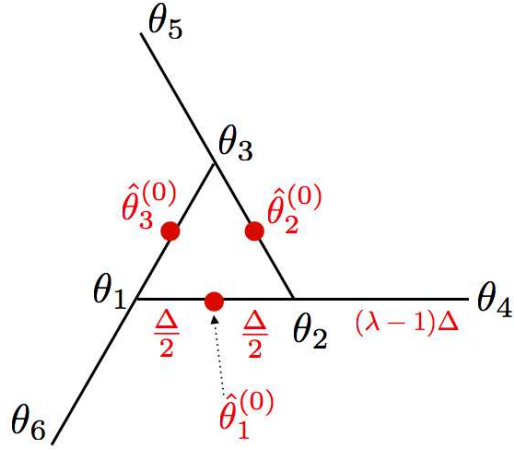}
\caption{A counter example showing that Lloyd's algorithm may not converge when $G_0=\frac{1}{2\lambda}$ or $\Lambda_0 = \frac{1}{2}$.}
\label{figure:A0}
\end{figure}


\subsection{Proofs of Random Initialization}
\begin{proof} [Proof of Theorem \ref{thm:2improved}]
For a data independent initializer, we have the following result. 
\begin{lemma} \label{lm:dataindept}
For any $\delta < 1/4$, if we have a data independent initializer satisfying 
\begin{equation} \label{eq:dataindept}
A_0 \le \frac{1}{2} - \frac{\sigma}{\|\theta^*\|} \sqrt{\frac{2\log(1/\delta)}{n}} \left(1 + \frac{18 \sqrt{d}}{r} \right)
\end{equation}
the conclusion of Theorem \ref{thm:2main} continues to hold with probability greater than $1-\delta$.
\end{lemma}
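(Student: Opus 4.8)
The plan is to re-run the first Lloyd iteration from the proof of Theorem \ref{thm:2main}, but to replace the two worst-case concentration inputs --- the uniform-over-subsets bound on $R$ (Lemma \ref{lm:tech4}) and the bound $\bar w'\theta^* \ge -\|\theta^*\|^2/\sqrt n$ (Lemma \ref{lm:tech1}) --- by sharp pointwise Gaussian tail bounds that are available precisely because the initial error set is noise-independent. Write $S_0 = \{i : \hat z_i^{(0)} \neq z_i\}$ for the initially misclustered set. Since the initializer is data independent, $S_0$ and its complement are fixed sets independent of $\{w_i\}$, and this independence is the only structural feature we exploit; it is used only at $s=0$. Once we show that one iteration brings the error below the threshold of (\ref{eq:2cond}), the updated labels $\hat z^{(1)}$ --- now data dependent --- satisfy the hypothesis of Theorem \ref{thm:2main}, and the full conclusion (\ref{eq:twoconst})--(\ref{eq:tworate}) follows by applying that theorem from step $s=1$.

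The heart of the argument is a sharper control of the projection onto $\theta^*$ in the label-update identity (\ref{eq:two.error.label}). Recall from (\ref{eq:decomptheta}) that $\hat\theta^{(0)} - \theta^* = -2A_0\theta^* - 2R_0 + \bar w$ with $R_0 = W_{S_0}/n$. The key observation is that $2R_0 - \bar w = \tfrac1n\sum_i \epsilon_i w_i$, where $\epsilon_i = +1$ for $i \in S_0$ and $\epsilon_i = -1$ otherwise. Because the signs $\epsilon_i$ are fixed (independent of the noise), the scalar $\iprod{2R_0 - \bar w}{\theta^*} = \tfrac1n\sum_i \epsilon_i \iprod{w_i}{\theta^*}$ is a centered Gaussian with standard deviation $\sigma\|\theta^*\|/\sqrt n$, so a one-dimensional Gaussian tail gives $|\iprod{2R_0-\bar w}{\theta^*}| \le \sigma\|\theta^*\|\sqrt{2\log(1/\delta)/n}$ with probability at least $1-\delta/3$. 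Dividing by $\|\theta^*\|^2$, this term contributes only $\tfrac{\sigma}{\|\theta^*\|}\sqrt{2\log(1/\delta)/n}$ to the signal coefficient $\beta_0$ in (\ref{eq:two.error.label}), which is exactly the leading (``$1$'') part of the slack allowed by (\ref{eq:dataindept}); it simultaneously replaces both the $2\sqrt{2A_s}/r$ and the $1/\sqrt n$ penalties carried by the data-dependent condition (\ref{eq:2cond}).

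It remains to absorb the cross term $\iprod{w_i}{-2A_0\theta^* + 2R_0 - \bar w}$ uniformly over $i$, and this is where the dimension enters. Using the quadratic (second-moment) bound as in the derivation of $I_2$, together with Lemma \ref{lm:tech6} for $\lambda_{\max}(\sum_i w_iw_i') \lesssim (n+d)\sigma^2$, one needs the norm $\|2R_0 - \bar w - 2A_0\theta^*\|$. Here independence of $S_0$ again helps: for the fixed set $S_0$, $\|W_{S_0}\|$ concentrates at $\sigma\sqrt{|S_0| d} = \sigma\sqrt{n A_0 d}$ up to a $\sqrt{\log(1/\delta)}$ tail, so the residual carries a $\sqrt d$-dependent fluctuation. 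Tracking the constants through this estimate produces the multiplicative factor $1 + 18\sqrt d/r$ in the admissible slack of (\ref{eq:dataindept}). Combining the two estimates shows $\beta_0 > 0$ under (\ref{eq:dataindept}), so the first iteration drives $A_1$ strictly below $\tfrac12 - (2.56 + \sqrt{\log r})/r - 1/\sqrt n$.

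I expect the main obstacle to be this last step: the cross term $\iprod{w_i}{v}$ couples each $w_i$ to the noise-dependent residual $v = 2R_0 - \bar w - 2A_0\theta^*$, so the clean one-dimensional Gaussian argument used for the $\theta^*$-projection is unavailable, and one must either pay the operator-norm bound of Lemma \ref{lm:tech6} (gaining uniformity over $i$ at the cost of the $\sqrt d$ factor) or condition on $\{w_i\}_{i\in S_0}$ and argue conditionally. Balancing this dimension-dependent fluctuation against the available signal, while keeping the total failure probability at $\delta$ rather than the polynomial-in-$n$ levels of Theorem \ref{thm:2main}, is the delicate bookkeeping; the assumption $\|\theta^*\| \ge C\sigma(\sqrt{\log(1/\delta)} + (d\log(1/\delta)(1+d/n))^{1/4})$ in Theorem \ref{thm:2improved} is exactly what makes the $1 + 18\sqrt d/r$ slack smaller than $1/2$, so that a slightly-better-than-random data-independent initializer can meet (\ref{eq:dataindept}).
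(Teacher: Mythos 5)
Your overall architecture is the same as the paper's: treat the first iteration specially, exploit the fact that the misclustered set $S_0$ is independent of the noise to get (i) a one-dimensional Gaussian tail for the projection of the center error onto $\theta^*$ at scale $\sigma\|\theta^*\|\sqrt{\log(1/\delta)/n}$ and (ii) a pointwise bound $\|R\|^2\lesssim \sigma^2 d\log(1/\delta)/n$ for the fixed-sign noise average, and then hand $\hat z^{(1)}$ off to Theorem \ref{thm:2main}. Those two concentration inputs are exactly the ones the paper uses. However, there is a genuine gap in how you deploy them: you keep the coefficient $\beta_0=1-2A_0-\cdots$ attached to the signal term that competes against $\iprod{w_i}{\theta^*}$, and you conclude from $\beta_0>0$ that $A_1$ drops below the threshold of (\ref{eq:2cond}). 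This does not follow. Under (\ref{eq:dataindept}) the gap $1-2A_0$ may be as small as $O(\sqrt{\log(1/\delta)/n})$, so $\beta_0\|\theta^*\|^2$ is only $O(\sigma\|\theta^*\|\sqrt{\log(1/\delta)/n})$, i.e.\ an $O(\sqrt{\log(1/\delta)/n})$ multiple of the standard deviation of $\iprod{w_i}{\theta^*}$. The term $I_1=\frac1n\sum_i\mathbb{I}\{\iprod{w_i}{\theta^*}\le-\beta_0\|\theta^*\|^2\}$ then has expectation $\approx\Phi(-\beta_0\|\theta^*\|/\sigma)=\tfrac12-O(\sqrt{\log(1/\delta)/n})$, which in general is \emph{not} below $\tfrac12-(2.56+\sqrt{\log r})/r-1/\sqrt n$; moreover your residual $v=2R_0-\bar w-2A_0\theta^*$ has $\|v\|\approx 2A_0\|\theta^*\|$, so the Markov/operator-norm bound on the cross term contributes an additional constant of order $A_0^2$. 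One Lloyd step analyzed this way leaves you essentially at error $1/2$.

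The missing idea is a normalization, not a sharper constant. Since $\hat\theta^{(1)}=(1-2A_0)\theta^*+R$ with $R=\frac1n\sum_i(1-2c_i)w_i$ genuinely small (this is where data-independence enters), $\hat\theta^{(1)}$ points almost exactly along $\theta^*$, and the label update depends only on its direction. The paper therefore divides the inner product $\iprod{\theta^*+w_i}{\hat\theta^{(1)}}$ by the positive scalar $1-2A_0$ before splitting, turning the error event into
$\|\theta^*\|^2+\iprod{w_i}{\theta^*}+\iprod{R}{\theta^*+w_i}/(1-2A_0)\le0$,
and then splits the \emph{full} $\|\theta^*\|^2$ in halves between the two resulting indicators. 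The first indicator, $\mathbb{I}\{\iprod{w_i}{\theta^*}\le-\tfrac12\|\theta^*\|^2\}$, now has exponentially small probability regardless of how close $A_0$ is to $1/2$; the amplification factor $1/(1-2A_0)$ lands only on $\iprod{R}{\theta^*+w_i}$, where it is absorbed because $\iprod{R}{\theta^*}\ge-\sigma\|\theta^*\|\sqrt{2\log(1/\delta)}/\sqrt n$ and $\|R\|^2\le 4\sigma^2 d\log(1/\delta)/n$ (rather than the worst-case $\|R\|\lesssim\sigma\sqrt{1+d/n}$ from Lemma \ref{lm:tech4}). That is precisely how the two pieces of the slack in (\ref{eq:dataindept}) --- the leading $1$ and the $18\sqrt d/r$ --- arise, and it yields $A_1\le1/4$ with probability $1-3\delta$, after which Theorem \ref{thm:2main} applies. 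Without this division your decomposition cannot close.
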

Given Lemma \ref{lm:dataindept}, let us study the condition under which completely random initialization falls into the basin of attraction (\ref{eq:dataindept}).  When we randomly assign initial labels, we have $|A_0 - 1/2| \ge n^{-1/2}$ with probability greater than 0.3. We can boost this probability to $1-1/\delta$ by independently drawing $3 \log(1/\delta)$ different random initializers. Recall our loss function (\ref{eq:loss}).  We can assume $A_0 < 1/2 - n^{-1/2}$. Otherwise, we may flip all the labels in the initial step. Combining this with condition (\ref{eq:dataindept}),  we have the following corollary. \\

Now it remains to prove Lemma \ref{lm:dataindept}. We focus on the first iteration. By (\ref{eq:decomptheta}), we have $\hat{\theta}^{(1)} - \theta^* = -2A_0 \theta^* +  R$ with $R = \frac{1}{n}\sum_{i=1}^{n}(1-2c_i) w_i$. Then 
\begin{eqnarray*}
\mathbb{I}\{\hat{z}_i^{(1)} \neq z_i\} 
&=& \mathbb{I}\left\{ \iprod{\theta^*+w_i}{(1-2A_0)\theta^*+R} \le 0 \right\} \\ 
&\le& \mathbb{I}\left\{ \iprod{w_i}{\theta^*} \le -\frac{1}{2} \|\theta^*\|^2 \right\} + \mathbb{I}\left\{ \frac{\iprod{R}{\theta^*+w_i}}{1-2A_0} \le - \frac{1}{2}\|\theta^*\|^2 \right\}.
\end{eqnarray*}
Since the initializer is data independent, $\{c_i\} \subseteq \{-1,1\}^{n}$ are independent of $\{w_i\}$. Chernoff's bound implies
\[ \iprod{\theta^*}{R}= \frac{1}{n} \sum_{j=1}^{n} (1-2c_j) w_i' \theta^* \ge - \frac{\sigma \|\theta^*\|\sqrt{2\log(1/\delta)}}{\sqrt{n}} \]
with probability greater than $1-\delta$. Consequently, we obtain
\begin{equation} \label{eq:random1}
A_1 \le \frac{1}{n} \sum_{i=1}^{n} \mathbb{I}\left\{ \iprod{w_i}{\theta^*} \le - \frac{1}{2} \|\theta^*\|^2 \right\} +\frac{1}{n} \sum_{i=1}^{n} \mathbb{I}\left\{ \iprod{R}{w_i} \le - \beta \|\theta^*\|^2 \right\},
\end{equation}
where $\beta = \frac{1}{2}-A_0 - \frac{\sigma \sqrt{2\log(1/\delta)}}{\sqrt{n} \|\theta^*\|}$. By Chernoff's bound and Hoeffding's inequality, the first term in the RHS of (\ref{eq:random1}) can be upper bounded by
\begin{equation*}
\exp \left( - \frac{\|\theta^*\|^2}{8\sigma^2} \right) + \sqrt{\frac{\log(1/\delta)}{n}}
\end{equation*}
with probability greater than $1-\delta$. Markov's inequality and proof of Lemma \ref{lm:tech6} give us
\begin{eqnarray*}
\frac{1}{n} \sum_{i=1}^{n} \mathbb{I}\left\{ \iprod{R}{w_i} \le - \beta \|\theta^*\|^2 \right\} \le \frac{1}{n \beta^2\|\theta^*\|^4} \sum_{i=1}^{n} (R'w_i)^2 \le \frac{9\sigma^2(1+d/n)}{\beta^2\|\theta^*\|^4} \|R\|^2.
\end{eqnarray*}
with probability greater than $1-\exp(-n/2)$. From (\ref{eq:bbb}), we have $\|R\|^2 \le \frac{4\sigma^2 d \log(1/\delta)}{n}$ with probability greater than $1-\delta$. Combining the pieces, we obtain
\begin{equation}
A_1 \le \exp \left( - \frac{\|\theta^*\|^2}{8\sigma^2} \right) + \sqrt{\frac{\log(1/\delta)}{n}} +  \frac{36\sigma^4(1+d/n)}{\beta^2\|\theta^*\|^4}  \frac{d \log(1/\delta)}{n}
\end{equation}
with probability greater than $1-3\delta$. Under the condition that $$ A_0 \le \frac{1}{2} - \frac{\sigma}{\|\theta^*\|} \sqrt{\frac{2\log(1/\delta)}{n}} \left(1 + \frac{18\sigma \sqrt{d(1+d/n)}}{\|\theta^*\|} \right), $$ 
we have $A_1 \le \frac{1}{4}$ with probability greater than $1-3\delta$. Then using the result of Theorem \ref{thm:2main}, the proof is complete.
\end{proof}
\section{Proofs of Community Detection} \label{sec:SBMproof}
\subsection{Proof of Theorem \ref{thm:SBMyu}}
We use the same notation as in the Gaussian case. Let $T_g^*$ be the true cluster $g$ and $T_g^{(s)} $ be the estimated cluster $g$ at iteration $s$. And we will drop the dependency of $s$ when there is no ambiguity in the context. $A_s$ is the mis-clustering rate at iteration $s$ and $G_s$ is the group-wise mis-clustering rate at iteration $s$. Similar to the analysis of Gaussian mixture model, the proof consists of two steps. We first prove that given $G_0 \le \frac{1}{2} - \epsilon_0$ with $\epsilon_0 \ge \frac{C_0\sqrt{a} \beta k \log(\beta k)}{a-b}$ for a sufficiently large constant $C_0$,  we have $G_s \le \frac{1}{3}$ for all $1 \le s \le \lfloor 3\log n \rfloor$ with high probability. Then we prove the mis-clustering rate $A_s$ geometrically decays to the rate (\ref{eq:SBMthm}). \\

\noindent Given $z_i=g$, we decompose (\ref{eq:SBMcenter}) as follows. 
\begin{eqnarray*}
\frac{1}{n_h} \sum_{j=1}^{n} A_{ij} \mathbb{I}\{\hat{z}_j^{(s)}=h\} 
&=& \frac{1}{n_h} \sum_{j=1}^{n} w_{ij} \mathbb{I}\{\hat{z}_j^{(s)}=h\} + \frac{1}{n_h} \sum_{j=1}^{n} \sum_{l=1}^{k} B_{gl} \mathbb{I}\{\hat{z}_j^{(s)}=h, z_j=l\} \\
&=& \frac{1}{n_h} \sum_{j \in T_h} w_{ij} + \frac{a}{n} \frac{n_{gh}}{n_h} + \frac{b}{n} \frac{\sum_{l \neq g} n_{lh}}{n_h} \\
&=& \frac{1}{n_h} W_{i}(T_h) + \frac{a}{n} \frac{n_{gh}}{n_h} + \frac{b}{n} \left(1 - \frac{n_{gh}}{n_h} \right)
\end{eqnarray*}
Consequently, the error of estimating $z_i$ at iteration $s+1$ can be upper bounded as
\begin{eqnarray} 
&& \nonumber \mathbb{I}\{\hat{z}_i^{(s+1)}\neq g, z_i = g\} \\
\nonumber &\le& \mathbb{I}\left\{ \max_{h \neq g} \sum_{j=1}^{n} A_{ij} \mathbb{I}\{\hat{z}_j^{(s)}=h\} \ge \sum_{j=1}^{n} A_{ij} \mathbb{I}\{\hat{z}_j^{(s)}=g\}, z_i=g \right\} \\
\nonumber &\le& \mathbb{I}\left\{ \max_{h \neq g} \frac{1}{n_h} W_{i}(T_h) - \frac{1}{n_g} W_{i}(T_g) \ge \frac{a-b}{n} \left( \frac{n_{gg}}{n_g} - \max_{h \neq g}\frac{n_{gh}}{n_h} \right) \right\} \\
\nonumber &\le& \mathbb{I}\left\{ \max_{h \neq g} \frac{1}{n_h} W_{i}(T_h) - \frac{1}{n_g} W_{i}(T_g) \ge \frac{2\epsilon(a-b)}{n} \right\} \\
\label{eq:SBMerrorz} &\le& \mathbb{I}\left\{ \max_{h \neq g} \frac{1}{n_h} W_{i}(T_h) \ge \frac{\epsilon(a-b)}{n} \right\} +  \mathbb{I} \left\{ \frac{1}{n_g} W_{i}(T_g) \le -\frac{\epsilon(a-b)}{n} \right\}
\end{eqnarray}
where the second inequality is due to our induction assumption that  $n_{gg} \ge (1-1/2+\epsilon)n_g$ and $n_{gh} \le (1/2-\epsilon) n_h$. Union bound implies
\begin{equation} \label{eq:SBMbasic} 
\mathbb{I}\{\hat{z}_i^{(s+1)} \neq z_i\} \le \sum_{h=1}^{k}\mathbb{I}\left\{ W_{i}(T_h) \ge \frac{\epsilon(a-b)n_h}{n} \right\} +  \mathbb{I} \left\{ W_{i}(T_{z_i}) \le -\frac{\epsilon(a-b)n_{z_i}^*}{n} \right\}. 
\end{equation}

Now we give two upper bounds on the sum of indicator variables uniformly over $\{T_h\}$. There proofs are deferred to Section \ref{sec:SBMlemma}.

\begin{lemma} \label{lm:SBMkey} Given $T^* \subseteq [n]$ with cardinality greater than $n/(\beta k)$ and $\epsilon>0$ such that $\epsilon^2 (a-b)^2\ge C_0 \beta a k \log(n/m)$ for a sufficiently large constant  $C_0$. There is an universal constant $C$ such that with probability greater than $1-n^{-2}$, 
\begin{equation} \label{eq:SBMkey}
\sum_{i=1}^{m} \mathbb{I}\left\{\sum_{j \in T}w_{ij} \ge \frac{\epsilon(a-b)|T|}{n} \right\} \le m \exp \left( - \frac{\epsilon^2 (a-b)^2}{C\beta a k} \right) + \frac{|T \Delta T^*|}{5}
\end{equation}
holds for all  $T \subseteq [n]$ with $|T\Delta T^*| \le \frac{1}{2} |T^*|$.
\end{lemma}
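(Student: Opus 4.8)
The plan is to reduce the uniform statement over all admissible $T$ to the single fixed reference set $T^*$ plus a perturbation governed by $|T\Delta T^*|$. First I would record the properties of the centered entries $w_{ij}=A_{ij}-\E A_{ij}$: for fixed $i$ they are independent across $j$, satisfy $|w_{ij}|\le 1$ and $\E w_{ij}=0$, and have $\Var(w_{ij})\le a/n$. Writing $W_i(T)=\sum_{j\in T}w_{ij}$ and $\tau_T=\epsilon(a-b)|T|/n$, the basic device is, for a margin $t>0$,
\[
\mathbb{I}\{W_i(T)\ge \tau_T\}\le \mathbb{I}\{W_i(T^*)\ge \tau_T-t\}+\mathbb{I}\{W_i(T)-W_i(T^*)\ge t\},
\]
and I would take $t=\tau_T/2$, so that the first indicator involves only the fixed set $T^*$ while the second involves only the columns of $S:=T\Delta T^*$. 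Summing over $i$ splits the count into a bulk term and a perturbation term.

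For the bulk term I sum the first indicator over $i\in[m]$. Since it depends on $T$ only through $|T|$, and $|T^*|-|S|\le |T|\le |T^*|+|S|$ takes at most $|T^*|+1$ values, a union bound over these thresholds is cheap. Bernstein's inequality applied to $W_i(T^*)$ with variance proxy $|T^*|a/n$ and the hypothesis $|T^*|\ge n/(\beta k)$ gives a per-row tail $\exp(-\epsilon^2(a-b)^2/(C'a\beta k))$ at threshold $\tau_T/2$; the linear (Bernstein) term only improves the exponent because $a\ge a-b$. Summing over rows and controlling the resulting count (whose rows are only weakly dependent, through the symmetry $w_{ij}=w_{ji}$) yields a bound of the form $m\exp(-\epsilon^2(a-b)^2/(Ca\beta k))$, which is exactly the first term on the right-hand side of (\ref{eq:SBMkey}).

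The perturbation term is the crux and the main obstacle, since it is here that the bound must hold uniformly over the exponentially many sets $T$. On the event $\{W_i(T)-W_i(T^*)\ge \tau_T/2\}$ I would use $w_{ij}\le A_{ij}$ and $-w_{ij}\le p_{ij}\le a/n$ to bound $W_i(T)-W_i(T^*)\le e_i(S^+)+|S|a/n$, where $S^+=T\setminus T^*$ and $e_i(S^+)=\sum_{j\in S^+}A_{ij}$ counts the edges from $i$ into $S^+$. Hence a row is "bad'' only if it has anomalously many edges into the small set $S^+$, a rare large-deviation event because $e_i(S^+)$ is stochastically dominated by a $\mathrm{Binomial}(|S|,a/n)$ with mean $|S|a/n\ll \tau_T$. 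I would bound the number of bad rows by a Chernoff (Poisson-tail) estimate and then union bound over all $\binom{n}{\le|S|}$ choices of $S^+$, peeling over the value of $|S|$; the entropy incurred is $\lesssim |S|\log(en/|S|)$. The assumption $\epsilon^2(a-b)^2\ge C_0\beta a k\log(n/m)$ is precisely what forces the large-deviation exponent to dominate this subset-counting entropy, keeping the count of bad rows below $|T\Delta T^*|/5$ simultaneously over all $T$ with $|T\Delta T^*|\le \tfrac12|T^*|$. Finally I would collect the failure probabilities from the Bernstein step (union over $O(|T^*|)$ thresholds) and from the Chernoff/subset step (union over $|S|$ and over $S^+$) and verify they sum to at most $n^{-2}$; this is where $C_0$ is taken large enough, and the same scheme with the roles of the tails reversed handles the companion lower-tail term appearing in (\ref{eq:SBMbasic}).
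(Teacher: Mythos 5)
Your overall architecture --- split $\mathbb{I}\{W_i(T)\ge\tau_T\}$ into a piece involving only the fixed reference set $T^*$ and a piece involving only $S=T\Delta T^*$, and pay subset-counting entropy only on the second --- is genuinely different from the paper's proof, which applies Bernstein directly to $W_i(T)$ for each fixed $T$, controls the row count $\sum_i a_i(T)$ by Bennett's inequality, and peels over shells $2^{r-1}s\le|T\Delta T^*|\le 2^{r}s$ with $s=m\exp(-R/30)$; there the $|T\Delta T^*|/5$ term is nothing but the deviation allowance in Bennett needed to absorb the $\binom{n}{u}2^{u}$ entropy of each shell. Your bulk term is fine in spirit (the halved threshold only costs a constant in the exponent, and the union over the $O(|T^*|)$ values of $|T|$ is cheap), modulo the same concentration-of-the-count step that the paper makes precise via Bennett plus the additive slack $s$.

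The gap is in the perturbation term, and as written it is fatal. You bound $W_i(T)-W_i(T^*)\le e_i(S^+)+|S|a/n$ and assert that the binomial mean $|S|a/n$ is $\ll\tau_T$, so that $e_i(S^+)\ge\tau_T/2-|S|a/n$ is a large-deviation event. But the lemma must hold for all $|S|=|T\Delta T^*|$ up to $|T^*|/2$, and in that regime $|S|a/n$ is \emph{not} small compared to $\tau_T$: from $|T|\le\tfrac32|T^*|$ one gets $|S|a/n\ge a|T|/(3n)$, while $\tau_T=\epsilon(a-b)|T|/n\le a|T|/(2n)$ for any $\epsilon\le 1/2$ (the only regime used downstream, e.g.\ $\epsilon=1/3$ or $\epsilon=\epsilon_0$), so $\tau_T/2-|S|a/n\le-\tfrac16\tau_T<0$. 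The threshold is negative, the binomial tail is vacuous, every row is ``bad,'' and your count is $m$ rather than $\le|S|/5$. The root cause is discarding the randomness of the $T^*\setminus T$ columns through the deterministic bound $-w_{ij}\le a/n$: when $|S|$ is a constant fraction of $|T^*|$ that mean shift swamps the margin. To repair it you must keep $\sum_{j\in S^+}w_{ij}-\sum_{j\in S^-}w_{ij}$ centered and apply Bernstein to it (per-row probability $\exp(-c\min\{\tau_T,\ \tau_T^2 n/(a|S|)\})$, which is still of order $\exp(-c\epsilon^2(a-b)^2/(\beta ak))$ since $|S|\lesssim|T|$), and you then still need a Bennett-type bound on the \emph{number} of bad rows, uniformly over the $\binom{n}{|S|}$ choices of $S$, with the exponent beating the entropy $|S|\log(en/|S|)$ --- at which point you have essentially reconstructed the paper's argument, applied to the perturbation piece instead of to $W_i(T)$ itself.
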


\begin{lemma} \label{lm:SBMkey2}Given $T^* \subseteq [n]$ with cardinality greater than $n/(\beta k)$ and $\epsilon>0$ such that $\epsilon^2 (a-b)^2\ge C_0 \beta^2 a k^2 \log(n/m)$ for a sufficiently large constant  $C_0$. There is an universal constant $C$ such that with probability greater than $1-n^{-2}$,  
\begin{equation} \label{eq:SBMkey2}
\sum_{i=1}^{m} \mathbb{I}\left\{ \sum_{j \in T}w_{ij} \ge\frac{\epsilon(a-b)|T|}{n} \right\} \le m \exp \left( - \frac{\epsilon^2 (a-b)^2}{C\beta^2 a k^2} \right) + \frac{|T \Delta T^*|}{4\beta k}
\end{equation}
holds for all  $T \subseteq [n]$ with $|T\Delta T^*| \le \frac{1}{2} |T^*|$.
\end{lemma}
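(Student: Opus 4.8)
The plan is to prove Lemma~\ref{lm:SBMkey2} by following the template of Lemma~\ref{lm:SBMkey} almost verbatim, the only substantive change being that the allowed deviation count is shrunk from $|T\Delta T^*|/5$ to $|T\Delta T^*|/(4\beta k)$; this smaller count is exactly what forces the stronger separation condition $\epsilon^2(a-b)^2\ge C_0\beta^2 a k^2\log(n/m)$. Throughout I write $W_i(T)=\sum_{j\in T}w_{ij}$ and $t=\epsilon(a-b)|T|/n$, and I first record that every admissible $T$ (those with $|T\Delta T^*|\le\tfrac12|T^*|$) satisfies $|T|\in[\tfrac12|T^*|,\tfrac32|T^*|]$, so $|T|\asymp|T^*|\ge n/(\beta k)$ and the threshold $t$ is comparable to $\epsilon(a-b)|T^*|/n$ uniformly in $T$.

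The first step is a per-row tail bound that is uniform over admissible $T$. For fixed $T$ and fixed $i$, $W_i(T)$ is a sum of $|T|$ independent centered Bernoulli variables, each of variance at most $a/n$, so Bernstein's inequality gives $\Prob\{W_i(T)\ge t\}\le q$ with $-\log q\gtrsim \epsilon^2(a-b)^2|T|/(an)\gtrsim \epsilon^2(a-b)^2/(a\beta k)$, the subexponential term being dominated since $\epsilon(a-b)<a$. In particular $q\le\exp(-\epsilon^2(a-b)^2/(C\beta^2 a k^2))$, which is already the claimed exponential rate (indeed a weaker bound than what Bernstein yields, so it holds with room to spare). The indicators $X_i:=\mathbb{I}\{W_i(T)\ge t\}$ are independent across $i$ (the single shared symmetric entry $w_{ij}=w_{ji}$ is handled exactly as in the CommuLloyd analysis and is negligible), so $N(T):=\sum_{i=1}^m X_i$ is stochastically dominated by a $\mathrm{Bin}(m,q)$ variable.

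For a fixed $T$ with $|T\Delta T^*|=\ell$ I would then control the upper tail of $N(T)$ by the Chernoff/Poisson bound $\Prob\{N(T)\ge x\}\le(emq/x)^{x}$, taking the target $x=mq'+\ell/(4\beta k)$ with $q'=\exp(-\epsilon^2(a-b)^2/(C\beta^2 a k^2))\ge q$ the claimed rate; since $x\ge\ell/(4\beta k)$ this produces a failure probability of order $\exp\!\big(-\tfrac{\ell}{4\beta k}\log\tfrac{\ell}{C\beta k m q}\big)$. The final and decisive step is the union bound over all admissible $T$ with $|T\Delta T^*|=\ell$: there are at most $\binom{n}{\ell}\le(en/\ell)^{\ell}$ such sets (each is $T=T^*\Delta D$ for a size-$\ell$ set $D$), so I must defeat an entropy term of size $\ell\log(en/\ell)$. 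Comparing exponents, the union bound closes precisely when
\[
\tfrac{1}{4\beta k}\log(1/q)\ \gtrsim\ \log(n/\ell),\qquad\text{i.e.}\qquad -\log q\ \gtrsim\ \beta k\,\log(n/\ell),
\]
and substituting the Bernstein rate $-\log q\gtrsim\epsilon^2(a-b)^2/(a\beta k)$ turns this into $\epsilon^2(a-b)^2\gtrsim a\beta^2 k^2\log(n/\ell)$, which is exactly the assumed separation condition (the combinatorial entropy being absorbed into the $\log(n/m)$ factor). Making each per-$\ell$ probability $\le n^{-3}$ and summing over $\ell$ then delivers the uniform bound with probability at least $1-n^{-2}$.

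The main obstacle is this last balancing step, and specifically pinning down why the smaller remainder coefficient $1/(4\beta k)$, rather than the $1/5$ of Lemma~\ref{lm:SBMkey}, degrades the attainable exponent by the factor $\beta k$: shrinking the allowed deviation count $x$ directly weakens the Chernoff exponent $x\log(1/q)$, so to still dominate the entropy $\binom{n}{\ell}$ one must make $q$ exponentially smaller, which is affordable only under the strengthened condition $\epsilon^2(a-b)^2\ge C_0\beta^2 a k^2\log(n/m)$. Everything else -- the Bernstein row tail, the binomial upper-tail estimate, and the handling of the symmetric zero-diagonal structure of $A$ -- is routine and identical to the proof of Lemma~\ref{lm:SBMkey}.
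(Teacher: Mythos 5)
Your proposal is correct and follows essentially the same route as the paper's proof: a Bernstein tail for each row, a Bennett/Chernoff binomial bound on the count of exceedances with an additive slack of order $m\exp(-R/(8\beta k))$ playing the role of your $mq'$, and a peeling union bound over sets stratified by $|T\Delta T^*|$, with the key observation that the shrunken remainder coefficient $1/(4\beta k)$ is what forces the strengthened separation condition $\epsilon^2(a-b)^2\gtrsim \beta^2 a k^2\log(n/m)$. The only differences are cosmetic (exact-$\ell$ stratification and $\binom{n}{\ell}$ counting versus the paper's dyadic shells $\mathcal{D}_r$ and $\binom{n}{u}2^u$ counting).
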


In Lemma \ref{lm:SBMkey} and Lemma \ref{lm:SBMkey2}, we present results for the upper tail $\sum_{i=1}^{m}\mathbb{I}\left\{ W_{i}(T_h) \ge \epsilon(a-b)|T|/n \right\}$. By slightly modifying the proof, the same results hold for the lower tail $\sum_{i=1}^{m} \mathbb{I}\left\{ W_i(T_h) \le - \epsilon(a-b)|T|/n \right\}$. Taking an sum over $i \in T_g^*$  in (\ref{eq:SBMbasic}) and using Lemma \ref{lm:SBMkey2}, we obtain
\begin{eqnarray*}
&& \sum_{i \in T_g^*} \mathbb{I}\{\hat{z}_i^{(s+1)} \neq z_i\} \\
&\le& \sum_{h \neq g} \sum_{i \in T_g^*} \mathbb{I}\left\{ W_{i}(T_h) \ge \frac{\epsilon(a-b)n_h}{k} \right\} + \sum_{i \in T_g^*}  \mathbb{I} \left\{ W_{i}(T_{g}) \le -\frac{\epsilon(a-b)n_h}{k} \right\} \\
&\le& k n_g^* \exp \left( - \frac{\epsilon^2 (a-b)^2}{C\beta^2 a k^2} \right) + \frac{1}{4\beta k} \sum_{h=1}^{k} |T_h \Delta T_h^*| \\
&\le& k n_g^* \exp \left( - \frac{\epsilon^2 (a-b)^2}{C\beta^2 a k^2} \right) + \frac{n}{2\beta k} A_s.
\end{eqnarray*}
with probability greater than $1-k n^{-2}$. Consequently,
\[ \frac{1}{n_g^*} \sum_{h \neq g} n_{gh}^{(s+1)} \le k \exp \left( - \frac{\epsilon^2 (a-b)^2}{C\beta^2 a k^2} \right) + \frac{1}{2} G_s. \]
with probability greater than $1-k n^{-2}$. Here we have used the fact that $A_s \le G_s$. Using similar arguments, we obtain the same high probability upper bound for $\frac{1}{n_h} \sum_{g \neq h} n_{gh}^{(s+1)}$. Therefore, we have
\[ G_{s+1} \le k \exp \left( - \frac{\epsilon^2 (a-b)^2}{C\beta^2 a k^2} \right) + \frac{1}{2} G_s \]
with probability greater than  $1-k^2 n^{-2}$. When $s=0$, we choose $\epsilon=\epsilon_0$. Since $\epsilon_0 \ge \frac{C_0\sqrt{a} \beta k}{a-b}$, we have $G_1 \le 1/12+G_0/2 \le 1/3$ when $C_0 \ge (C+1)\log12$. For $s \ge 1$, we choose $\epsilon = 1/3$ and it is straight forward to prove by induction that $G_{s+1} \le 1/12+1/6 \le 1/3$ for all  $0 \le s \le 3 \log n$, with probability greater than $1- n^{-1}$, provided $n \ge 3k^2 \log n$. \\

\noindent Now we are ready to give the convergence rate of $A_s$. Since $G_s \le 1/3$ for all $s \in [1,3\log n]$,  $(\ref{eq:SBMbasic})$ holds for all $s \in [1,3\log n]$. Taking average over $i \in [n]$ in (\ref{eq:SBMbasic}) and using Lemma \ref{lm:SBMkey} with $\epsilon=1/3$,  we obtain
\begin{eqnarray*}
A_{s+1} 
&\le& \frac{1}{n} \sum_{h =1}^{k} \sum_{i=1}^{n} \mathbb{I}\left\{ W_{i}(T_h) \ge \frac{a-b}{6k} \right\} +  \frac{1}{n} \sum_{g=1}^{k} \sum_{i \in T_g^*} \mathbb{I} \left\{ W_{i}(T_{g}) \le -\frac{a-b}{6k} \right\} \\
&\le& 2 k \exp \left( - \frac{(a-b)^2}{C\beta a k^2} \right) + \frac{2}{5n} \sum_{h=1}^{k} |T_h \Delta T_h^*| \\
&\le& \exp \left( - \frac{(a-b)^2}{2C\beta a k^2} \right) + \frac{4}{5} A_s
\end{eqnarray*}
with probability greater than $1-k n^{-2}$, where $C$ is some universal constant.

\subsection{Proof of Lemma \ref{lm:SBMkey} and Lemma \ref{lm:SBMkey2}} \label{sec:SBMlemma}
For any fixed $T$ with $|T \Delta T^*| \le \gamma n/k$, let 
$$a_i(T) = \mathbb{I}\left\{  \sum_{j \in T} w_{ij} \ge \frac{\epsilon(a-b)|T|}{n}\right\}.$$ By Bernstein's inequality, the success probability of $a_i(T)$ is upper bounded by
\begin{eqnarray*}
\mathbb{P} \left\{ \sum_{j \in T} w_{ij} \ge  \frac{\epsilon(a-b)|T|}{n} \right\} 
&\le& \exp \left( - C_1 \min\left\{\frac{\epsilon(a-b)|T|}{n}, \frac{\epsilon^2 (a-b)^2 |T|}{a n}  \right\} \right) \\
&\le& \exp \left( - \frac{C_2 \epsilon^2 (a-b)^2}{\beta a k} \right),
\end{eqnarray*}
for some universal constant $C_1$ and $C_2$. Here the last inequality is due to the fact that $|T| \ge |T^*| - |T \Delta T^*| \ge n/(2\beta k)$.  Note that $\{a_i(T), i \in [n]\}$ are independent Bernoulli random variables.
By Bennett's inequality (see Lemma 9 in \cite{gao2015achieving}), for any $t>0$,
\begin{eqnarray} 
\nonumber \mathbb{P} \left\{ \sum_{i=1}^{m} a_i (T) \ge m p + t \right\} 
&\le& \exp \left( t-(mp + t) \log \left( 1+\frac{t}{mp} \right) \right) \\
\label{eq:SBMbennett} &\le& \exp \left( - t \log \left( \frac{t}{emp} \right) \right) ,
\end{eqnarray} 
where $$p = \frac{1}{n}\sum_{i=1}^{n} \E a_i(T) \le \exp \left( - \frac{C_2\epsilon^2 (a-b)^2}{\beta a k} \right)\triangleq\exp \left( - R \right).$$ To prove (\ref{eq:SBMkey}) holds for all $T \subseteq [n]$ with  $|T \Delta T^*| \le \frac{\gamma n}{k}$, we need a chaining argument. For $s=m\exp(-R/30)$, define $\mathcal{D}_0 = \left\{ T \subseteq [n],  |T \Delta T^*| \le s \right\}$ and 
$$\mathcal{D}_r = \left\{T \subseteq [n],  2^{r-1} s \le |T \Delta T^*| \le 2^{r} s \right\}.$$ Then $\mathcal{D} \subseteq \cup_{r=0}^{u} \mathcal{D}_r$ with $u = \lceil \log n \rceil$. Union bound implies\begin{eqnarray*}
\mathcal{A} &\triangleq& \mathbb{P} \left\{ \exists ~T \in \mathcal{D}, ~s.t. ~\sum_{i=1}^{m} a_i(T) \ge m p + \frac{1}{5} |T \Delta T^*| + s \right\} \\
&\le& \sum_{r=0}^{u} \mathbb{P} \left\{ \exists ~T \in \mathcal{D}_r, ~s.t. ~\sum_{i=1}^{m} a_i(T) \ge mp+\frac{1}{5} |T \Delta T^*| + s \right\} \\
&\le& \sum_{r=0}^{u} |\mathcal{D}_r| \max_{T \in \mathcal{D}_r} \mathbb{P} \left\{ \sum_{i=1}^{m} a_i(T) \ge mp+\frac{1}{5} |T \Delta T^*| + s \right\} \\
\end{eqnarray*}
Note that $T = T^* \setminus (T^* \cap T^c) \bigcup (T \cap (T^*)^c)$. Given $T^*$ and $|T \Delta T^*|=u$, there are at most ${n \choose u} 2^u $ possible choices of $T$. Thus, the cardinality of $\mathcal{D}_r$ is upper bounded by 
\begin{equation*}
\sum_{u=0}^{2^{r}s} {n \choose u} 2^u \le \sum_{u=0}^{2^{r}s} \left(\frac{2en}{u} \right)^u \le 2^r s \exp\left(2^{r} s \log \left( \frac{2e n}{2^{r} s} \right) \right) \le \exp\left(2^{r} s \log \left( \frac{e n}{2^{r-2} s} \right) \right).
\end{equation*} 
Combining this with (\ref{eq:SBMbennett}), we obtain
\begin{eqnarray*}
&& |\mathcal{D}_0| \max_{T \in \mathcal{D}_0} \mathbb{P} \left\{ \sum_{i=1}^{m} a_i(T) \ge mp+\frac{1}{5} |T \Delta T^*| + s \right\} \\
&\le& \exp \left( - s \log \left( \frac{s}{emp} \right) + s \log \left( \frac{2en}{s} \right)\right) \\
&\le& \exp \left( - \frac{Rs}{2} \right)
\end{eqnarray*}
and
\begin{eqnarray*}
&& |\mathcal{D}_r| \max_{T \in \mathcal{D}_r} \mathbb{P} \left\{ \sum_{i=1}^{m} a_i(T) \ge mp+\frac{1}{5} |T \Delta T^*| + s \right\} \\
&\le& \exp \left( - \frac{2^{r}}{10} s \log \left( \frac{2^{r}s}{10emp} \right) + 2^{r} s \log \left( \frac{en}{2^{r-2}s} \right)\right) \\
&\le& \exp \left( - \frac{2^{r} Rs}{20} \right)
\end{eqnarray*}
for all $r \ge 1$, where we have used the assumption that $\frac{(a-b)^2}{\beta a k} \ge C\log \frac{n}{m}$ for a sufficiently large constant $C$. Thus,
\begin{eqnarray*}
\mathcal{A} \le \exp \left( - \frac{Rs}{2} \right) + \sum_{r=1}^{u} \exp\left( - \frac{2^{r} Rs}{20} \right) \le 3\exp \left( - \frac{Rs}{10} \right).
\end{eqnarray*}
By our assumption, we have $R = \frac{C_2 \epsilon^2 (a-b)^2}{\beta a k } \ge \frac{3}{4} R + 30\log \frac{n}{m}$. Consequently, $\mathcal{A} \le 3\exp \left( - 3 s \log \frac{n}{s} \right) \le n^{-2}$. Therefore, the proof Lemma \ref{lm:SBMkey} is complete. Lemma \ref{lm:SBMkey2} follows from almost identical arguments except that we choose $s = m\exp\left(-\frac{R}{8 \beta k}\right)$.

\section{Proofs of Crowdsourcing}
\subsection{Proof of Theorem \ref{thm:CSmvoting}} \label{sec:MVproof}
Suppose $z_j = g$, then Chernoff bound gives us
\begin{eqnarray*}
\mathbb{P}\left\{\hat{z}_j^{(0)} \neq z_j \right\} 
&=& \mathbb{P}\left\{ \exists h \neq g, \sum_{i=1}^{m} \mathbb{I}\{X_{ij}=h\} \ge \sum_{i=1}^{m} \mathbb{I}\{X_{ij}=g\} \right\} \\
&\le& \sum_{h \neq g}  \mathbb{P}\left\{\sum_{i=1}^{m} \left( \mathbb{I}\{X_{ij}=h\} - \mathbb{I}\{X_{ij}=g\} \right)>0 \right\} \\
&\le& \sum_{h \neq g} \prod_{i=1}^{m} \E \exp \left( \lambda \mathbb{I}\{X_{ij}=h\} -  \lambda \mathbb{I}\{X_{ij}=g\} \right).
\end{eqnarray*}
for all $\lambda >0 $. Since $X_{ij}$ is from a multinomial distribution $(\pi_{ig1}, \pi_{ig2}, \cdots, \pi_{igk})$, we have
\begin{eqnarray*}
&&\prod_{i=1}^{m} \E \exp \left( \lambda \mathbb{I}\{X_{ij}=h\} -  \lambda \mathbb{I}\{X_{ij}=g\} \right) \\
&=& \prod_{i=1}^{m} \left( \pi_{igh} e^{\lambda} + \pi_{igg} e^{-\lambda} + (1 - \pi_{igh} - \pi_{igg}) \right) \\
&\stackrel{\text{(a)}}{\le}& \prod_{i=1}^{m} \exp \left( \pi_{igh} e^{\lambda} + \pi_{igg} e^{-\lambda}-  \pi_{igh} -  \pi_{igg} \right)\\\
&=& \exp \left( Ae^{\lambda} + Be^{-\lambda} - A - B  \right),
\end{eqnarray*}
where $A = \sum_{i=1}^{m}\pi_{igh}$ and $B = \sum_{i=1}^{m} \pi_{igg}$. Here inequality (a) is due to the fact that $1+x \le e^x$ for all $x$. Choosing $\lambda = 0.5\log (B/A)$ yields
\begin{eqnarray*}
\mathbb{P}\left\{\hat{z}_j^{(0)} \neq z_j \right\}  &\le& \sum_{h \neq g} \exp \left( - \left( \sqrt{\sum_{i=1}^{m} \pi_{igg}} - \sqrt{\sum_{i=1}^{m} \pi_{igh}} \right)^2 \right) \\
&\le& k \exp \left( - V(\pi) \right)
\end{eqnarray*}
where $V(\pi)=\min_{h \neq g} \left( \sqrt{\sum_{i=1}^{m} \pi_{igg}} - \sqrt{\sum_{i=1}^{m} \pi_{igh}} \right)^2$. Then using Markov's inequality, we have
\begin{eqnarray*}
\mathbb{P} \left\{ \frac{1}{n} \sum_{j=1}^{n} \mathbb{I}\{\hat{z}_j^{(0)} \neq z_j \} \ge t \right\} \le \exp \left( - V(\pi) + \log k  - \log t \right)
\end{eqnarray*}
for all $t>0$. A choice of $t=1/(4k)$ completes the proof.

\subsection{Proof of Corollary \ref{thm:crowdsourcing}} \label{sec:CSproof}
Let us first calculate the sub-Gaussian parameter of $w_i$. For any $\lambda>0$ and $a=(a_{11}, a_{12}, \cdots, a_{mk}) \in \mathbb{R}^{mk}$ with $\|a\|=1$,
\[ \E \exp \left( \lambda \iprod{a}{w_i} \right) = \prod_{u=1}^{m} \E \exp \left( \lambda \sum_{h=1}^{k} w_{uih} a_{uh} \right). \]
For any fixed $u$, define a Multinoulli  random variable $X$ such that \[ \mathbb{P}\{X = a_{uh}\} = \pi_{u z_i h}, h \in [k]. \]
Recall the definition of $w_{uih}$ that $w_{uih}=\mathbb{I}\{X_{ui=h}\} - \pi_{u z_i h}$. Then,
\[ \E \exp \left( \lambda \sum_{h=1}^{k} w_{uih} a_{uh} \right) =  \E e^{\lambda(X-\E X)} \le e^{2\lambda^2 \max_{h \in [k]} a_{uh}^2}, \]
where the last inequality is because $|X| \le \max_{h \in [k]} |a_{uh}|$ is a bounded random variable. Consequently, we have
\[ \E \exp \left( \lambda \iprod{a}{w_i} \right) \le \E \exp \left( 2\lambda^2 \sum_{u=1}^{m} \max_{h \in [k]} a_{uh}^2 \right) \le \E \exp \left( 2\lambda^2 \right) . \]
Therefore, the sub-Gaussian parameter of $w_i$ is upper bounded by 2.

\section{Proofs of Technical Lemmas} \label{sec:techproof}
To help readers better understand the results, we first give proofs for spherical Gaussians and then extend it to the general sub-Gaussians. Without loss of generality, we assume $\sigma=1$ in this section. 

\subsection{Proofs for spherical Gaussians}
We denote by $\mathcal{N}(0,\Sigma)$ the normal random vectors with mean zero and covariance matrix $\Sigma$. For $Z \sim \mathcal{N}(0,1)$, we have the following two facts on its moment generating functions. 
\begin{equation} \label{eq:Nfact1}
\E e^{\lambda Z_1} = e^{\frac{\lambda^2}{2}}.
\end{equation}
\begin{equation} \label{eq:Nfact2}
\E e^{\lambda Z_1^2} = \frac{1}{\sqrt{1-2\lambda}} \quad \textrm{ for all } \lambda <1/2.
\end{equation}
\begin{proof}[Proof of Lemma \ref{lm:tech4}]
For any fixed $S \subseteq [n]$, $W_S = \sum_{i \in S} w_i \sim \mathcal{N}(0, |S| I_d)$. By (\ref{eq:Nfact2}), we have
$$\E e^{ \lambda \|W_S\|^2} 
= (1-2\lambda|S|)^{-d/2}$$
for $\lambda < 1/(2|S|)$. Then Chernoff's bound yields
\begin{equation}\label{eq:bbb}
\mathbb{P}\left\{ \|W_S\|^2 \ge t \right\}  \le \exp \left( - \lambda t - \frac{d}{2} \log (1-2\lambda|S|) \right). 
\end{equation}
Choosing $\lambda=0.49/|S|$ and $t=1.62(n+4d)|S|$, 
$$ \mathbb{P}\left\{ \|W_S\|^2 \ge 1.62(n+4d)|S| \right\} \le \exp \left( - 0.7938n - 3.17d + \frac{d}{2} \log 50 \right).  $$
Since there are $2^n$ subsets of $[n]$, an union bound argument gives us
\begin{eqnarray*}
\mathbb{P} \left\{ \max_{S}\left(\| W_S \| - \sqrt{1.62 (n+4d)|S|}\right) \ge 0 \right \}
\le 2^n \exp\left(- 0.7938n \right)
\le \exp(- 0.1n).
\end{eqnarray*}
\end{proof}

\begin{proof}[Proof of Lemma \ref{lm:tech1}]
Since $w_i' \theta^* \stackrel{i.i.d} \sim \mathcal{N}(0,\|\theta^*\|^2)$, we have $\bar{w}'\theta^* \sim \mathcal{N}(0,\|\theta^*\|^2/n)$. Consequently, 
\[ \mathbb{P} \left\{ - \bar{w}'\theta^* \ge \frac{\|\theta^*\|^2}{\sqrt{n}} \right\} \le \exp \left( - \frac{\|\theta^*\|^2}{2} \right). \]
Choosing $\lambda=0.4/n$ in (\ref{eq:bbb}), we have
\[ \mathbb{P} \left\{ \|\bar{w}\|^2 \ge t \right \} \le \exp\left(- 0.4nt + \frac{d}{2} \log 5 \right). \]
A choice of $t = \frac{1}{n} \left( 3d \sigma^2 + \|\theta^*\|^2\right)$ yields the desired result. 
\end{proof}

\begin{proof}[Proof of Lemma \ref{lm:tech6}]
Let $A=[w_1, w_2, \cdots, w_n]$. Then 
$$\sup_{\|a\|=1} \sum_{i=1}^{n} (a'w_i)^2 = \sup_{\|a\|=1} \|Aa\|_2^2.$$ Let $\mathcal{B}_1$ be the unit ball in $\mathbb{R}^d$ and $\mathcal{C}$ be a $\epsilon$-net of $\mathcal{B}_1$ such that for any $a \in \mathcal{B}_1$, there is a $b \in \mathcal{C}$ satisfying $\|a-b\| \le \epsilon$.  Then we have $|\mathcal{C}| \le (1+2/\epsilon)^d$ \cite[Lemma 4.1]{pollard90} and 
\begin{eqnarray*}
\|Aa\| &\le& \|Ab\| + \|A(a-b)\| \\
&\le& \max_{b \in \mathcal{C}} \|Ab\| + \|a-b\| \sup_{a \in \mathcal{B}_1} \|Aa\| \\
&\le& \max_{b \in \mathcal{C}} \|Ab\| + \epsilon \sup_{a \in \mathcal{B}_1} \|Aa\|.
\end{eqnarray*}
Taking a supreme over $a \in \mathcal{B}_1$ on both sides and rearranging, we get 
\begin{equation} \label{eq:covering}
\sup_{a \in \mathcal{B}_1} \|Aa\| \le \frac{1}{1-\epsilon}\max_{b \in \mathcal{C}} \|Ab\|.
\end{equation}
For any fixed $b \in \mathcal{B}_1$, $b'w_i \stackrel{i.i.d} \sim \mathcal{N}(0,1)$.  Then by (\ref{eq:Nfact2}), 
\[ \mathbb{P} \left\{ \sum_{i=1}^{n} (b'w_i)^2 \ge t \right\} \le e^{- 0.25t} \prod_{i=1}^{n} \E e^{ 0.25(b'w_i)^2} \le \exp \left( - 0.25t + 0.35 n \right), \]
where we have set $\lambda=0.25$. Then (\ref{eq:covering}) and union bound give us
\begin{eqnarray*} 
\mathbb{P} \left\{ \sup_{a \in \mathcal{B}_1} \|Aa\|^2 \ge t \right\} &\le& \sum_{b \in \mathcal{C}} \mathbb{P} \left\{ \|Ab\|^2 \ge (1-\epsilon)^2 t \right\} \\
&\le& \exp \left( - 0.25(1-\epsilon)^2t+ 0.35 n + d \log (1+2/\epsilon) \right). 
\end{eqnarray*}
A choice of $\epsilon=0.05$ and $t=2(n+9d)$ yields the desired result. 
\end{proof}

\begin{proof}[Proof of Lemma \ref{lm:tech5}]
For each $i \in [n]$, define $G_i=S \cap \{i\}^c$. Then,
\begin{equation} \label{eq:tech5term} 
\iprod{w_i}{\sum_{l \in S} w_l} \le \iprod{w_i}{\sum_{l \in G_i} w_l} + \|w_i\|^2
\end{equation}
$W_{G_i} = \sum_{l \in G_i} w_l$ is an isotopic Gaussian random vector with variance smaller than $|S|$ on every direction. By (\ref{eq:Nfact1}) and (\ref{eq:Nfact2}), for any independent $Z_1 \sim N(0,1)$ and $Z_2 \sim N(0,1)$,
\[ \E e^{\lambda Z_1 Z_2} = \E e^{\frac{\lambda^2 Z_1^2}{2}} = \frac{1}{\sqrt{1-\lambda^2}}  \]
for all $\lambda \in [0,1)$.
Since $w_{i}$ and $W_{G_i}$ are independent, Chernoff bound implies
\begin{equation*} \label{eq:tech5term}
\mathbb{P}\left\{ \iprod{w_i}{W_{G_i}}  \ge \sqrt{|S|} t \right\}  \le  \exp \left( - \lambda t - \frac{d}{2} \log(1-\lambda^2) \right) \le \exp \left( - \lambda t + \lambda^2 d \right).
\end{equation*}
for $\lambda \le 1/2$. Choosing $\lambda=\min\{\frac{t}{2d},\frac{1}{2}\}$ yields
\begin{eqnarray} \label{eq:tech5term1}
\mathbb{P}\left\{ \iprod{w_i}{W_{G_i}}  \ge \sqrt{|S|} t \right\}  \le  \exp \left( - \min \left\{\frac{t^2}{4d}, \frac{t}{4}\right\} \right).
\end{eqnarray}
From (\ref{eq:bbb}), a choice of $\lambda=1/3$ gives us
\[ \mathbb{P} \left\{ \|w_i\|_2^2 \ge t \right\} \le \exp\left( - \frac{t}{3} + 0.55 d\right). \] 
Setting $t=3(d+4\log n)$, we obtain
\begin{equation} \label{eq:tech5term2}
\mathbb{P} \left\{ \|w_i\|_2^2 \ge 3\sigma^2(d+\log(1/\delta)) \right\} \le \delta
\end{equation}
with probability greater than $1-\delta$. Combining (\ref{eq:tech5term}), (\ref{eq:tech5term1}) and (\ref{eq:tech5term2}) completes the proof.
\end{proof}

\subsection{Proofs of sub-Gaussians}
\begin{proof}[Proof of Lemma \ref{lm:tech1.1}]
For any fixed $S \subseteq [n]$, $W_S$ is a $d$ dimensional random vector satisfying
$$ \E e^{\iprod{a}{W_S}} = \prod_{i \in S} \E e^{ \iprod{a}{w_i}} \le e^{\frac{|S| \|a\|^2}{2}} $$
for all $a \in \mathbb{R}^d$. By \cite[Theorem 2.1]{hsu2012tail}, we have
\begin{equation} \label{eq:techkey}
\mathbb{P}\left\{ \|W_S\|^2 \le |S|(d+2\sqrt{dt}+2t) \right\} \le \exp(-t).
\end{equation}
Note that there are at most $2^n$ possible choices of $S$. Using the union bound completes the proof.
\end{proof}

\begin{proof}[Proof of Lemma \ref{lm:tech1.2}]
 For any fixed $b \in \mathbb{R}^{d}$ with $\|b\|=1$, $b'w_i's$ are independent sub-gaussian random variables with parameter $1$. Using 
\cite[Theorem 2.1]{hsu2012tail} again, 
\begin{equation}
\mathbb{P}\left\{ \sum_{i=1}^{n} (b'w_i)^2 \le n+2\sqrt{nt}+2t \right\} \le \exp(-t).
\end{equation}
Following the same argument as in the proof of Lemma \ref{lm:tech6} and choosing $t=2d+n$, we obtain the desired result. 
\end{proof}

\begin{proof}[Proof of Lemma \ref{lm:tech1.3}]
Similarly to the proof of Lemma \ref{lm:tech5}, we upper bound $\iprod{w_i}{W_S}$ as 
\[ \iprod{w_i}{W_S} \le \iprod{w_i}{W_{G_i}} + \|w_i\|^2.  \]
Since $w_i$ and $W_{G_i}$ are independent sub-Gaussian random vectors, we have
\[ \E e^{\lambda \iprod{w_i}{W_{G_i}}} \le \E e^{\frac{\lambda^2\|W_{G_i}\|^2}{2}} \le \E e^{\frac{\lambda^2 |S| \|z\|^2}{2}} \]
for $\lambda^2 \le 1/|S|$, where $z \sim \mathcal{N}(0, I_d)$. Here the last inequality is due to \cite[Remark 2.3]{hsu2012tail}. Then \ref{eq:tech5term1}) also holds for sub-Gaussian random vectors. Applying \cite[Theorem 2.1]{hsu2012tail} on $w_i$, we have
\[ \mathbb{P}\left\{ \|w_i\|^2 \ge d+2\sqrt{d\log(1/\delta)} + 2\log(1/\delta)\right\} \le \delta \]
for $\delta>0$. Following the same arguments as in the proof of Lemma \ref{lm:tech5}, the proof is complete.
\end{proof}

\begin{proof}[Proof of Lemma \ref{lm:tech3}]
For any $h \in [k]$, (\ref{eq:techkey}) implies
\[ \mathbb{P}\left\{ \|W_{T_h^*}\|^2 \le |T_h^*|(d+4\sqrt{d\log n}+8\log n) \right\} \le n^{-4}.\]
Since $n \ge k$, a union bound argument completes the proof.
\end{proof}

\begin{proof}[Proof of Lemma \ref{lm:tech2}]
Let $u_i = \mathbb{I}\left\{ a\|\theta_h-\theta_g\|^2 \le \iprod{w_i}{\|\theta_h-\theta_g\|} \right\}$. Then $u_i, i \in T_g^*$ are independent Bernoulli random variables. Hoeffding's inequality implies
\[ \sum_{i \in T_g^*} u_i \le \sum_{i \in T_g^*} \E u_i  + 2 \sqrt{ n_g^* \log n}  \]
with probability greater than $1-n^{-4}$. Using Chernoff bound and the sub-Gaussian property of $w_i$, we have
\begin{eqnarray*} 
\E u_{i} &\le& \E \exp \left( - \lambda a \|\theta_h-\theta_g\|^2 + \lambda \iprod{w_i}{\|\theta_h-\theta_g\|} \right) \\
&\le& \exp \left( - \lambda a \|\theta_h-\theta_g\|^2 + \lambda^2  \|\theta_h-\theta_g\|^2/2 \right) \\
&=& \exp \left( - a^2 \|\theta_h-\theta_g\|^2/2 \right),
\end{eqnarray*}
where we set $\lambda=a$ in the last equality. By union bound and the fact that $n \ge k$, we get the desired result.  
\end{proof}



\bibliography{mixture}
\bibliographystyle{plain}

\end{document}